\documentclass[11pt,oneside,reqno]{article}
\usepackage[margin=1.3in]{geometry}

\usepackage{amsmath,amsthm,amssymb,color}
\usepackage[authoryear]{natbib}
\usepackage{booktabs}

\usepackage{bbm}
\usepackage{mathrsfs}
\usepackage[breaklinks=true]{hyperref}
\usepackage{graphicx}
\usepackage{tikz}
\usepackage{enumitem}
\usetikzlibrary{arrows, automata}
\usetikzlibrary{calc}
\usetikzlibrary{positioning}

\numberwithin{equation}{section}
\allowdisplaybreaks[4]

\theoremstyle{plain}
\newtheorem{theorem}{Theorem}[section]
\newtheorem{proposition}{Proposition}[section]
\newtheorem{lemma}{Lemma}[section]
\newtheorem{corollary}{Corollary}[section]

\theoremstyle{definition}

\newtheorem{remark}{Remark}[section]

\makeatletter

\newcount\minute
\newcount\hour
\newcount\hourMins
\def\now{%
\minute=\time%
\hour=\time \divide \hour by 60%
\hourMins=\hour \multiply\hourMins by 60%
\advance\minute by -\hourMins%
\zeroPadTwo{\the\hour}:\zeroPadTwo{\the\minute}%
}
\def\zeroPadTwo#1{\ifnum #1<10 0\fi#1}

\renewcommand{\cite}{\citet}

\def\^#1{\ifmmode {\mathaccent"705E #1} \else {\accent94 #1} \fi}
\def\~#1{\ifmmode {\mathaccent"707E #1} \else {\accent"7E #1} \fi}

\def\*#1{#1^\ast}
\edef\-#1{\noexpand\ifmmode {\noexpand\bar{#1}} \noexpand\else \-#1\noexpand\fi}
\def\>#1{\vec{#1}}
\def\.#1{\dot{#1}}
\def\wh#1{\widehat{#1}}
\def\wt#1{\widetilde{#1}}

\def\atop{\@@atop}
\def\*#1{\mathscr{#1}}

\renewcommand{\leq}{\leqslant}
\renewcommand{\geq}{\geqslant}
\newcommand{\eps}{\varepsilon}
\renewcommand{\eps}{\varepsilon}

\newcommand{\eq}{\eqref}

\newcommand{\IE}{\mathbbm{E}}
\newcommand{\IP}{\mathbbm{P}}
\newcommand{\Var}{\mathop{\mathrm{Var}}\nolimits}

\newcommand{\tr}{\mathop{\mathrm{tr}}}

\newcommand{\IR}{\mathbb{R}}

\def\be#1{\begin{equation*}#1\end{equation*}}
\def\ben#1{\begin{equation}#1\end{equation}}
\def\bes#1{\begin{equation*}\begin{split}#1\end{split}\end{equation*}}
\def\besn#1{\begin{equation}\begin{split}#1\end{split}\end{equation}}

\def\bm#1{\begin{multline*}#1\end{multline*}}
\def\bmn#1{\begin{multline}#1\end{multline}}
\def\ba#1{\begin{align*}#1\end{align*}}
\def\ban#1{\begin{align}#1\end{align}}

\def\bkle#1{\bigl[#1\bigr]}

\def\bklg#1{\bigl\{#1\bigr\}}

\def\babs#1{\bigl\vert#1\bigr\vert}

\def\mid{\vert}

\def\beqn#1\eeqn{\begin{align}#1\end{align}}
\def\beq#1\eeq{\begin{align*}#1\end{align*}}

\usepackage{graphicx}
\usepackage{latexsym}
\usepackage{amsmath,amsthm,amssymb,amscd}
\usepackage{epsf,amsmath}

\def\E{{\IE}}
\def\P{{\IP}}

\newcommand{\mr}{\IR^d}

\newcommand{\mcl}[1]{\mathcal{#1}}

\DeclareMathOperator{\Hess}{Hess}

\renewcommand\section{\@startsection {section}{1}{\z@}%
{-3.5ex \@plus -1ex \@minus -.2ex}%
{1.3ex \@plus.2ex}%
{\center\small\sc\mathversion{bold}\MakeUppercase}}

\def\subsection#1{\@startsection {subsection}{2}{0pt}%
{-3.5ex \@plus -1ex \@minus -.2ex}%
{1ex \@plus.2ex}%
{\bf\mathversion{bold}}{#1}}

\def\subsubsection#1{\@startsection{subsubsection}{3}{0pt}%
{\medskipamount}%
{-10pt}%
{\normalsize\itshape}{\kern-2.2ex. #1.}}

\def\blfootnote{\xdef\@thefnmark{}\@footnotetext}

\makeatother

\begin{document}

\title{Large-dimensional Central Limit Theorem with Fourth-moment Error Bounds on Convex Sets and Balls}
\author{Xiao Fang and Yuta Koike}
\date{\it The Chinese University of Hong Kong and The University of Tokyo} 
\maketitle

\noindent{\bf Abstract:} 
We prove the large-dimensional Gaussian approximation of a sum of $n$ independent random vectors in $\mathbb{R}^d$ together with fourth-moment error bounds on convex sets and Euclidean balls. We show that compared with classical third-moment bounds, our bounds have near-optimal dependence on $n$ and can achieve improved dependence on the dimension $d$.
For centered balls, we obtain an additional error bound that has a sub-optimal dependence on $n$, but recovers the known result of the validity of the Gaussian approximation if and only if $d=o(n)$.
We discuss an application to the bootstrap. 
We prove our main results using Stein's method.

\medskip

\noindent{\bf AMS 2010 subject classification: }  60F05, 62E17

\noindent{\bf Keywords and phrases:}  Berry-Esseen bound, bootstrap, central limit theorem, large dimensions, Stein's method

\section{Introduction}

Let $\{\xi_i\}_{i=1}^n$ be a sequence of independent mean-zero random vectors in $\mathbb{R}^d$.
Let $W=\sum_{i=1}^n \xi_i$ and $\Sigma=\Var(W)$.  
It is well known that under finite third-moment conditions and for fixed dimension $d$, the distribution of $W$ can be approximated by a Gaussian distribution with error rate $O(1/\sqrt{n})$.

Motivated by modern statistical applications, we are interested in the large-dimensional setting where $d$ grows with $n$.
Numerous studies have provided explicit error bounds on various distributional distances in the Gaussian approximation.
See, for example, \cite{Be03,Be05} and \cite{Ra19a} for results for the probabilities of convex sets in $\mathbb{R}^d$; \cite{CCK13, CCK17}, \cite{CCKK19}, \cite{FaKo20a}, \cite{Lo20} and \cite{KuRi20} for results for hyperrectangles; and \cite{Zh18}, \cite{EMZ18}, \cite{Ra19b} and \cite{Bo20} for results for the Wasserstein distance in the approximation.
However, the optimal rates, especially in terms of how rapidly $d$ can grow with $n$ while maintaining the validity of the Gaussian approximation, have not been fully addressed and remain a challenging open problem.

In this paper, we consider the approximation of probabilities of convex sets and Euclidean balls. 
For convex sets, \cite{Be05} proved for the above $W$ that if $\Sigma$ is invertible and $Z\sim N(0,\Sigma)$, then
\ben{\label{n23}
\sup_{A\in \mathcal{A}} |\P(W\in A)-\P(Z\in A)|\leq C d^{1/4} \sum_{i=1}^n \E[|\Sigma^{-1/2}\xi_i|^3 ],
}
where $\mathcal{A}$ is the collection of all measurable convex sets in $\IR^d$, $C$ is an absolute constant and $|\cdot|$ denotes the Euclidean norm when applied to a vector.
\cite{Ra19a} obtained an explicit constant in the error bound \eq{n23}.
The error bound \eq{n23} is optimal up to the factor $d^{1/4}$ because, as shown by \cite{Na76}, the bound no longer holds if we replace $d^{1/4}$ by any vanishing quantity.
For Euclidean balls, it is known that the factor $d^{1/4}$ can be removed if $\Sigma=I_d$, the $d\times d$ identity matrix. This was proved in \cite{Be03} for the independent and identically distributed (i.i.d.) case. The general case follows from \cite[Theorem 1.3 and Example 1.2]{Ra19a} and \cite[Remark 2.1]{Sa72}, for example.

Our first main result (cf. Theorem \ref{convex}) is that up to a logarithmic factor,
\ben{\label{n24}
\sup_{A\in \mathcal{A}} |\P(W\in A)-\P(Z\in A)|\leq_{\log} C d^{1/4} \left(\sum_{i=1}^n \E[|\Sigma^{-1/2}\xi_i|^4]\right)^{1/2}.
}
The bound \eq{n24} is optimal up to the $d^{1/4}$ and the logarithmic factors (cf. Proposition \ref{prop2}).
We will argue that (cf. Remark \ref{rem1}) under finite fourth-moment conditions, the bound \eq{n24} has near-optimal dependence on $n$. Moreover, perhaps surprisingly, it can achieve better dependence on dimension compared with \eq{n23}.
We note that applying the Cauchy-Schwarz inequality to \eq{n23} results in a bound as in \eq{n24}, but with an additional factor of $d^{1/2}$. It is the removal of this factor that enables the improvement of the dependence on dimension.

We then consider the Gaussian approximation on the class $\mathcal{B}$ of all Euclidean balls, which is arguably most relevant for statistical applications, e.g., chi-square tests. 
We show that (cf. Theorem \ref{BALL}) the factor $d^{1/4}$ in \eq{n24} can be removed if we replace $\mathcal{A}$ and $|\Sigma^{-1/2}\xi_i|$ with $\mathcal{B}$ and $\|\Sigma^{-1/2}\|_{op}|\xi_i|$, respectively. 
Furthermore, for centered balls, we obtain an additional error bound (cf.~Theorem \ref{BALL2}) that has a sub-optimal dependence on $n$, but recovers the known result of the validity of the Gaussian approximation as long as $d=o(n)$. 
Incidentally, the requirement $d=o(n)$ is necessary for the validity of the Gaussian approximation on balls (cf. Proposition \ref{prop1}).

We prove our main results using Stein's method (\cite{St72}) and its recent advances. To prove \eq{n24}, we use a Gaussian anti-concentration inequality for convex sets by \cite{Ba93}, the recursive argument of \cite{Ra19a}, a multivariate exchangeable pair coupling (\cite{ChMe08} and \cite{ReRo09}) and a symmetry argument in \cite{FaKo20a, FaKo20b}. 
To prove the results for balls, we further use Gaussian anti-concentration inequalities for ellipsoids by \cite{Zh20} and \cite{GNSU19}.

The bound \eq{n23} and its variants have been widely used in the statistics literature, especially in inference for models with large parameter dimensions. 
See, for example, \cite{SpZh15}, \cite{Po15}, \cite{PeSc18}, \cite{Sh19} and \cite{ChZh20}. 
Our new bounds' improved dependence on dimension may prove useful if we are interested in allowing $d$ to grow as rapidly as possible depending on $n$, which is one of the most important subjects in such literature. 
We will also discuss an application to the bootstrap that is ubiquitous in this field (see Section \ref{sec4}).

The paper is organized as follows.
In Sections \ref{sec2} and \ref{sec3}, we state our main results for the large-dimensional Gaussian approximation of sums of independent random vectors on convex sets and balls, respectively. In Section \ref{sec4}, we discuss an application to the bootstrap. Section \ref{sec5} contains all of the proofs.

For a matrix $M$, we use $\|M\|_{op}$ and $\|M\|_{H.S.}$ to denote its operator norm and Hilbert-Schmidt norm, respectively. We use $C$ to denote positive absolute constants which may differ in different expressions. For a vector $x\in \mathbb{R}^d$, we use $x_j, 1\leq j\leq d$ to denote its components. For a sequence of vectors $x_i\in \mathbb{R}^d, 1\leq i\leq n$, we use $x_{ij}$ to denote the $j$th component of $x_i$ for $1\leq j\leq d$. Similarly, we write $X_j$ and $X_{ij}$ for the components of random vectors $X$ and $X_i$, respectively.

\section{Approximation on Convex Sets}\label{sec2}

In this section, we consider the Gaussian approximation of sums of independent random vectors on convex sets. Our main result is the following fourth-moment error bound in the approximation.


\begin{theorem}\label{convex}
Let $\xi=\{\xi_i\}_{i=1}^n$ be a sequence of centered independent random vectors in $\IR^d$ with finite fourth moments and set $W=\sum_{i=1}^n\xi_i$. 
Let $Z\sim N(0,\Sigma)$ be a centered Gaussian vector in $\IR^d$ with covariance matrix $\Sigma$.
Assume $\Sigma$ is invertible.
Then,
\ben{\label{n01}
\sup_{A\in \mathcal{A}} |\P(W\in A)-\P(Z\in A)|\leq C d^{1/4} \Psi\left(\delta_\mathcal{A}(W,\Sigma)\right),
}
where
$\mathcal{A}$ is the collection of all measurable convex sets in $\IR^d$, $\Psi(x)=x(|\log x|\vee1)$ and
\[
\delta_\mathcal{A}(W,\Sigma):=\|I_d-\Var(\Sigma^{-1/2} W)\|_{H.S.}
+\left(\sum_{i=1}^n \E [|\Sigma^{-1/2} \xi_i|^4]\right)^{1/2}.
\]
\end{theorem}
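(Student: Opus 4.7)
The plan is to combine Gaussian mollification of $\mathbbm{1}_A$, Stein's method via an exchangeable pair in the framework of \cite{ChMe08} and \cite{ReRo09}, a symmetrisation trick that converts third-moment contributions into fourth-moment ones, and a recursive self-bound in the spirit of \cite{Ra19a}. First, reduce to $\Sigma=I_d$ by replacing each $\xi_i$ by $\Sigma^{-1/2}\xi_i$: since $\mathcal A$ is invariant under linear transformations this leaves the left-hand side unchanged, while producing the covariance gap $\|I_d-\Var(\Sigma^{-1/2}W)\|_{H.S.}$ as a residual, which is exactly the first summand of $\delta_{\mathcal A}$. For $A\in\mathcal A$ and a bandwidth $\epsilon>0$, set $h_{A,\epsilon}:=\mathbbm{1}_A*\phi_\epsilon$ (convolution with an isotropic Gaussian of variance $\epsilon^2$) and solve the Stein equation $\Delta f_{A,\epsilon}-x\cdot\nabla f_{A,\epsilon}=h_{A,\epsilon}-\E h_{A,\epsilon}(Z)$ via the Mehler semigroup. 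The usual derivative bounds on $f_{A,\epsilon}$ then degrade polynomially in $\epsilon^{-1}$, with a mild logarithmic loss that will feed into the $|\log\delta|$ factor in $\Psi$.

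Next, introduce the exchangeable pair $W':=W-\xi_I+\xi'_I$, with $I$ uniform on $\{1,\dots,n\}$ and $\xi'_i$ an independent copy of $\xi_i$. The exchangeable-pair identity writes $\E[\Delta f_{A,\epsilon}(W)-W\cdot\nabla f_{A,\epsilon}(W)]$ as an expectation involving the symmetrised increment $\xi_I-\xi'_I$ and the derivatives of $f_{A,\epsilon}$. A direct Taylor expansion to third order would reproduce the classical third-moment bound; instead, exploiting that $\xi_I-\xi'_I$ is conditionally symmetric, in the spirit of \cite{FaKo20a, FaKo20b}, annihilates all odd-order remainder terms after taking expectation, leaving a fourth-order remainder controlled by $\sum_i\E|\xi_i|^4$. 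The second-order term contributes the covariance-gap piece $\|I_d-\Var W\|_{H.S.}\cdot\|\Hess f_{A,\epsilon}\|_{H.S.}$.

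The factor $d^{1/4}$ enters through the Gaussian anti-concentration inequality of \cite{Ba93}, $\P(Z\in A^\epsilon\setminus A^{-\epsilon})\leq Cd^{1/4}\epsilon$, which controls the smoothing error at bandwidth $\epsilon$. The Hessian term above can moreover be rewritten, via a second application of Stein's trick, as a smoothed indicator of a Minkowski-dilated convex set; this generates a cross term in which $\delta_{\mathcal A}$ multiplies $R_n:=\sup_{A'\in\mathcal A}|\P(W\in A')-\P(Z\in A')|$ itself. One thereby obtains a self-bound roughly of the shape $R_n\leq Cd^{1/4}\epsilon+C|\log\epsilon|\,\epsilon^{-1}\delta_{\mathcal A}^2+C|\log\epsilon|\,\epsilon^{-1}\delta_{\mathcal A}\sqrt{R_n}$; optimising in $\epsilon$ and closing the recursion as in \cite{Ra19a} produces $R_n\leq Cd^{1/4}\Psi(\delta_{\mathcal A})$.

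The hardest step is the symmetrisation bookkeeping: one must arrange the Taylor expansion so that every third-order cross-term, including those mixing different coordinates of $\xi_i$ and different summands $i$, either vanishes by the symmetry of $\xi_I-\xi'_I$ or is absorbed into $\sum_i\E|\xi_i|^4$ without paying the extra $\sqrt d$ that the crude Cauchy--Schwarz consequence of the classical third-moment bound would cost. Saving this $\sqrt d$ is precisely the improvement over the bound obtained from \eq{n23} by Cauchy--Schwarz, so this step must be executed with care; the Mehler derivative estimates and the final recursive closure are then routine.
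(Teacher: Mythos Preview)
Your high-level strategy matches the paper's: reduce to $\Sigma=I_d$, smooth $\mathbbm{1}_A$, use the exchangeable pair $(W,W')$ with the symmetry/exchangeability rewrite to upgrade the third-order remainder to fourth order, invoke Ball's $d^{1/4}$ anti-concentration, and close via a Rai\v{c}-type recursion. However, two parts of your execution are mislocated and would not go through as written.

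First, the recursion does not enter through the Hessian (covariance-gap) term; it enters through the fourth-order remainder. In the decomposition $\E h(W)-\E h(Z)=R_1-R_2$, the covariance term $R_1=\E\langle V,\Hess f(W)\rangle_{H.S.}$ is bounded \emph{directly}, with no recursion, by inserting the integral representation of $\partial_{jk}f$ and applying the Hermite-polynomial orthogonality bound (Lemma~4.3 of \cite{FaRo15}); this gives $|R_1|\leq C(|\log\epsilon|\vee1)\,\delta_{\mathcal A}$ with no dimension loss. Your crude bound $\|I_d-\Var W\|_{H.S.}\cdot\|\Hess f_{A,\epsilon}\|_{H.S.}$ would cost an extra $\sqrt d$ here, which is precisely the factor the theorem is designed to save. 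The recursive ingredient actually arises in $R_2$: after the exchangeability rewrite \eqref{n09} and one more mean-value step, $R_2$ involves $\partial_{jk}h$ at a perturbation of the leave-one-out sum $W^{(i)}$, and one must control $\P(\sqrt{1-s}\,W^{(i)}\in A_i^\epsilon\setminus A_i)\leq Cd^{1/4}\epsilon/\sqrt{1-s}+2\sup_{A'}|\P(W^{(i)}\in A')-\P(Z\in A')|$. This is where the supremum quantity reappears, and the resulting self-bound is \emph{linear}, $K(\beta_0)\leq Cd^{1/4}+\tfrac12 K(\beta_0)$, not of your quadratic $\sqrt{R_n}$ form.

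Second, and relatedly, the paper uses Bentkus' smoothing $h_{A,\epsilon}$ (Lemma~\ref{nl1}), not Gaussian convolution. The key feature is that $\nabla h_{A,\epsilon}$ is supported on the shell $A^\epsilon\setminus A$; this support condition is what makes the previous paragraph work, since it reduces the fourth-order remainder to a probability of $W^{(i)}$ landing in a thin shell. With Gaussian mollification the gradient has full support, so the localisation step would require an additional tail-truncation argument that you do not provide.
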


Note that we do not assume $W$ and $Z$ have exactly the same covariance matrix. This facilitates potential applications to the bootstrap approximation and also helps in the recursive argument in the proof, where we need to consider approximating $W-\xi_i$ by $Z$.

The next result shows that the bound \eq{n01} is optimal up to the $d^{1/4}$ and the logarithmic factors.

\begin{proposition}\label{prop2}
There is an absolute constant $C_0>0$ such that, for sufficiently large $n$ $d\leq\sqrt{n}/\log n$, we can construct centered i.i.d.~random vectors $\xi_1,\dots,\xi_n$ in $\mathbb{R}^d$ with finite fourth moments (which may depend on $n$ and $d$) satisfying $\Var(W)=I_d$ and
\be{
\sup_{A\in \mathcal{A}} |\P(W\in A)-\P(Z\in A)|\geq C_0 \left(\sum_{i=1}^n \E[|\xi_i|^4]\right)^{1/2},
}
where $Z\sim N(0,I_d)$.
\end{proposition}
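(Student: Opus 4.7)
The plan is to exhibit a single i.i.d.\ distribution and a single convex set $A$ for which $|\P(W\in A)-\P(Z\in A)|$ can be bounded below by $C_0$ times the fourth-moment quantity in the statement. A natural candidate for the distribution is the ``atoms on axes'' construction
\[
\xi_i\;=\;\sqrt{d/n}\;\epsilon_i\,e_{J_i},
\]
where $\epsilon_i\in\{\pm 1\}$ is Rademacher, $J_i$ is uniform on $\{1,\dots,d\}$, and the pairs $(\epsilon_i,J_i)$ are independent across $i$. A direct computation shows $\E[\xi_i\xi_i^\top]=I_d/n$, so $\Var(W)=I_d$, and $|\xi_i|^2\equiv d/n$, whence $\sum_{i=1}^n\E[|\xi_i|^4]=d^2/n$ and the target of the claim becomes $C_0\,d/\sqrt n$. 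Under $\Var(W)=I_d$ in the i.i.d.\ setting, Cauchy--Schwarz forces $\E[|\xi_i|^4]\geq(d/n)^2$, so this example also minimizes the fourth-moment quantity up to constants, which is what makes it the natural extremizer to test against.

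The structural feature to exploit is that, writing $N_j=|\{i:J_i=j\}|$ and $S_j=\sum_{i:J_i=j}\epsilon_i$, one has $W_j=\sqrt{d/n}\,S_j$, with $(S_j)_j$ conditionally independent given $(N_j)_j$. In the regime $d\leq\sqrt n/\log n$, a multinomial deviation bound shows that every $N_j$ lies within a constant factor of $n/d$ with probability tending to one; this is where the logarithmic slack in the assumption is most likely spent. By the one-dimensional local CLT, $\P(W_j=0\mid N_j)\asymp\sqrt{d/n}$ whenever $N_j$ is even, so $W$ carries atoms of nontrivial size on the scaled integer lattice $\sqrt{d/n}\,\mathbb Z^d$, whereas $Z$ is absolutely continuous.

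For the witness I would take a convex polytope whose facets simultaneously detect these coordinate atoms---for example an axis-aligned box $A=\{w:w_j\leq t_j\text{ for all }j\}$ with the $t_j$ placed between adjacent lattice values---and compare probabilities via a telescoping argument that replaces one coordinate of $W$ by the corresponding coordinate of $Z$ at a time. A single half-space $\{w_1\leq 0\}$ already produces a discrepancy of order $\sqrt{d/n}$ from the atom at zero of $W_1$, but this is a factor of $\sqrt d$ short of $d/\sqrt n$; the telescoping is intended to produce $d$ conditional one-dimensional discrepancies, each of order $1/\sqrt n$, all of matching sign thanks to the symmetry of the construction, so that the sum is of order $d/\sqrt n$. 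The main obstacle, and the essential step, is precisely this no-cancellation claim: convexity couples the one-dimensional slices of $A$ through the other coordinates, so the thresholds $t_j$ and the shape of $A$ must be chosen with care, with cancellations ruled out via a joint local-CLT or Edgeworth analysis of the conditional atomic distribution of $(W_j)_j$ given $(N_j)_j$.
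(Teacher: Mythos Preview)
Your approach has a genuine gap, and the paper's route is entirely different.

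The paper does not accumulate discrepancies across coordinates. It takes Nagaev's one-dimensional construction: a variable $\eta$ that mixes a shifted Gaussian with a single atom at $x_n=\sqrt n/\log n$, so that $\E[\eta^3]\asymp x_n$ and $\E[\eta^4]\asymp x_n^2$. Nagaev's result gives $\P\bigl(n^{-1/2}\sum_i\eta_i<0\bigr)-\tfrac12\gtrsim \E[\eta^3]/\sqrt n\asymp\sqrt{\E[\eta^4]/n}$. One then pads with standard Gaussians in the remaining $d-1$ coordinates, setting $\xi_i=n^{-1/2}(\eta_i,\zeta_{i,1},\dots,\zeta_{i,d-1})^\top$; the half-space $\{x_1<0\}$ ignores the padding, and the hypothesis $d\le\sqrt n/\log n=x_n$ guarantees the Gaussians' fourth-moment contribution (of order $d^2/n$) does not dominate $\E[\eta^4]/n$. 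The point is to make $\sum_i\E|\xi_i|^4$ \emph{large} via a skewed heavy tail and match it with a single half-space; the other $d-1$ coordinates are inert.

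Your construction does the opposite: it is symmetric, so all odd cumulants vanish, and it minimizes the fourth-moment sum. The discrepancy must then come entirely from the lattice atoms, and your ``each of order $1/\sqrt n$'' is not correct: conditionally on $N_j\asymp n/d$, the local CLT gives atoms of $W_j$ of size $\sqrt{d/n}\,\phi(w)$ at level $w$, not $1/\sqrt n$. For the telescoped box probability to stay bounded away from zero you need $\Phi(t_j)\approx 1-c/d$, i.e.\ $t_j\asymp\sqrt{\log d}$, and at that level $\phi(t_j)\asymp\sqrt{\log d}/d$. Even granting independence of the $W_j$ and perfect sign alignment, the telescoped sum is then at most a constant times
\[
d\cdot\sqrt{d/n}\cdot\frac{\sqrt{\log d}}{d}=\sqrt{\frac{d\log d}{n}},
\]
short of the required $d/\sqrt n$ by a factor $\sqrt{d/\log d}$; this is just the fact that $\sup_t d\,\Phi(t)^{d-1}\phi(t)\asymp\sqrt{\log d}$, the maximal value of the density of $\max(Z_1,\dots,Z_d)$. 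So the obstacle is not merely the no-cancellation issue you flag but a quantitative damping that any convex set touching many coordinate atoms simultaneously must incur. The construction cannot reach the stated lower bound.
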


We use the next remark to discuss the crucial fact that our bound \eq{n01} may be preferable to the third-moment bound \eq{n23} in the large-dimensional setting.

\begin{remark}\label{rem1}
To understand the typical order of the right-hand side of \eq{n01}, we consider the situation where $\xi_i=X_i/\sqrt{n}$ and $\{X_1,X_2,\dots\}$ is a sequence of i.i.d.~mean-zero random vectors in $\mathbb{R}^d$ with $\Var(X_i)=I_d$. Let $\Sigma=I_d$.
For the $d$-vector $X_i$, $\E [|X_i|^3]$ and $\E [|X_i|^4]$ are typically proportional to $d^{3/2}$ and $d^{2}$, respectively. 
In this case, the right-hand side of \eq{n01} is of the order $O(\frac{d^{5/2}}{n})^{1/2}$ up to a logarithmic factor. In contrast, the right-hand side of \eq{n23} is of the order $O(\frac{d^{7/2}}{n})^{1/2}$.
Therefore, subject to the requirement of the existence of the fourth moment, \eq{n01} is preferable to \eq{n23} in the large-dimensional setting where $d\to \infty$.
We mention in this context that \cite[Corollary 1.5]{Zh18} obtained a bound typically of the order $O(\frac{d^{5/2}}{n})^{1/3}$ up to a logarithmic factor under a boundedness condition. He obtained the bound as a by-product of a Wasserstein-2 bound in the Gaussian approximation.
\end{remark}


\section{Approximation on Balls}\label{sec3}

In this section, we consider the Gaussian approximation of sums of independent random vectors on Euclidean balls.
In line with the results of Bentkus (2003) and Rai\v{c} (2019a) for the third-moment bound,
our first result shows that the factor $d^{1/4}$ appearing on the right-hand side of \eq{n01} may be removed if we restrict the approximation to the class of balls.
Again, we do not assume $W$ and $Z$ have the same covariance matrix.

\begin{theorem}\label{BALL}
Let $\xi=\{\xi_i\}_{i=1}^n$ be a sequence of centered independent random vectors in $\IR^d$ with finite fourth moments and set $W=\sum_{i=1}^n\xi_i$. 
Let $Z\sim N(0,\Sigma)$ be a centered Gaussian vector in $\IR^d$ with covariance matrix $\Sigma$.
Assume $\Sigma$ is invertible. 
Then 
\ben{\label{ball1}
\sup_{A\in\mcl{B}}|\P(W\in A)-\P(Z\in A)|\leq C\Psi\left(\delta_\mathcal{B}(W,\Sigma)\right),
}
where $\mcl{B}$ is the set of all Euclidean balls in $\IR^d$, $\Psi(x)=x(|\log x|\vee1)$ and
\[
\delta_\mathcal{B}(W,\Sigma):=\|\Sigma^{-1}\|_{op}\left\{\|\Sigma-\Var(W)\|_{H.S.}
+\left(\sum_{i=1}^n \E [|\xi_i|^4]\right)^{1/2}\right\}.
\]
\end{theorem}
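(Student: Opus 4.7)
The plan is to mirror the proof of Theorem \ref{convex} --- multivariate exchangeable-pair coupling as in \cite{ChMe08} and \cite{ReRo09}, Rai\v{c}'s recursive argument, and the symmetrisation trick of \cite{FaKo20a, FaKo20b} --- but to replace the one step that is sensitive to the geometry of the test class. The factor $d^{1/4}$ in \eq{n01} enters solely through Ball's Gaussian anti-concentration inequality for general convex sets \cite{Ba93}; under the $\Sigma^{-1/2}$ transformation a Euclidean ball becomes an ellipsoid, and on ellipsoids the anti-concentration inequalities of \cite{Zh20} and \cite{GNSU19} produce no such factor. That single substitution is what removes the $d^{1/4}$.

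First I would standardise. Put $Y=\Sigma^{-1/2}W$ and $G=\Sigma^{-1/2}Z\sim N(0,I_d)$. For any $a\in\IR^d$ and $r\geq 0$ the image $\Sigma^{-1/2}B(a,r)$ is the ellipsoid $\{y\in\IR^d:(y-\Sigma^{-1/2}a)^\top\Sigma(y-\Sigma^{-1/2}a)\leq r^2\}$, so
\[
\sup_{A\in\mcl{B}}\babs{\P(W\in A)-\P(Z\in A)}=\sup_{E\in\mcl{E}_\Sigma}\babs{\P(Y\in E)-\P(G\in E)},
\]
where $\mcl{E}_\Sigma$ denotes all ellipsoids in $\IR^d$ whose shape matrix is $\Sigma$. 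The operator-norm factor in $\delta_{\mathcal{B}}(W,\Sigma)$ arises naturally on passing between scales, through $\|I_d-\Var(Y)\|_{H.S.}\leq\|\Sigma^{-1}\|_{op}\|\Sigma-\Var(W)\|_{H.S.}$ and $(\sum_i\E|\Sigma^{-1/2}\xi_i|^4)^{1/2}\leq\|\Sigma^{-1}\|_{op}(\sum_i\E|\xi_i|^4)^{1/2}$; these two inequalities license stating the bound in terms of $\E|\xi_i|^4$ and $\|\Sigma-\Var(W)\|_{H.S.}$ rather than their standardised counterparts.

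The Stein step is then run exactly as in the proof of Theorem \ref{convex}: smooth $\mathbf{1}_E$ by Gaussian convolution of bandwidth $h$ to produce $f_h$; use the exchangeable-pair representation to compare $\E f_h(Y)$ with $\E f_h(G)$; iterate via Rai\v{c}'s recursion so that the fourth-moment quantity $(\sum_i\E|\Sigma^{-1/2}\xi_i|^4)^{1/2}$ appears in place of the third-moment sum; and apply the symmetrisation argument to cancel the odd-order remainder that would otherwise contribute a spurious $\sum_i\E|\Sigma^{-1/2}\xi_i|^3$. The only residual smoothing bias is the Gaussian mass of a $Ch$-thickening of $\partial E$, and this is the sole place where the geometry of $E$ enters.

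The main obstacle is to bound this shell probability by a constant multiple of $h$ --- rather than $d^{1/4}h$ as Ball's inequality would yield --- uniformly over $E\in\mcl{E}_\Sigma$, and with no more than one power of $\|\Sigma^{-1}\|_{op}$ in the constant. Since every $E\in\mcl{E}_\Sigma$ shares the shape matrix $\Sigma$, the ellipsoidal Gaussian anti-concentration inequalities of \cite{Zh20} and \cite{GNSU19} apply directly and deliver exactly such a linear-in-$h$ estimate. Optimising $h$ to balance this bias against the Stein error, and packaging the result via the map $x\mapsto\Psi(x)=x(|\log x|\vee 1)$ as in the proof of Theorem \ref{convex}, then yields \eq{ball1}. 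The delicate accounting is to ensure that the $\|\Sigma^{-1}\|_{op}$ factors produced by the standardisation and by the anti-concentration step do not compound beyond the single advertised power, which is what makes the clean expression $\delta_{\mathcal{B}}(W,\Sigma)$ achievable.
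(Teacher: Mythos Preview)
Your high-level plan --- replace Ball's anti-concentration with an ellipsoidal one and otherwise re-run the convex argument --- is the right instinct, but the standardisation step creates a genuine mismatch that you do not resolve.

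After applying $\Sigma^{-1/2}$, you smooth $\mathbf{1}_E$ by \emph{Euclidean} Gaussian convolution in the $Y$-space. The residual smoothing bias is then the standard-Gaussian mass of the Euclidean $h$-thickening $E^h$ of the ellipsoid $E=\{y:(y-b)^\top\Sigma(y-b)\leq r^2\}$. But the cited anti-concentration results do not control this quantity. Zhilova's bound, for $N(0,\Sigma)$ on Euclidean shells of \emph{balls}, translates under $\Sigma^{-1/2}$ to a bound on $\P(G\in E'\setminus E)$ where $E'$ is a \emph{concentric similar ellipsoid}, not the Euclidean thickening $E^h$; the GNSU19 bound likewise controls $\P(a\leq(G-b)^\top\Sigma(G-b)\leq a+\eps)$, again a quadratic-form shell. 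To pass from $E^h$ to such a shell one has only the crude inclusion $E^h\subset\{y:(y-b)^\top\Sigma(y-b)\leq(r+\|\Sigma\|_{op}^{1/2}h)^2\}$, which costs a factor $\|\Sigma\|_{op}^{1/2}$ and leaves you with anti-concentration constant $C\|\Sigma\|_{op}^{1/2}\|\Sigma^{-1}\|_{op}^{1/2}=C\kappa(\Sigma)^{1/2}$ rather than an absolute constant. This condition-number factor appears both in the smoothing bias and (via the analogue of \eqref{n14}) inside the recursive bound for $R_2$; it survives the optimisation over $h$ and spoils the advertised $\delta_{\mathcal{B}}$. In effect your route replaces $d^{1/4}$ by $\kappa(\Sigma)^{1/2}$, not by $1$.

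The paper sidesteps this by \emph{not} standardising. It reduces to diagonal $\Sigma$ by orthogonal invariance and then works directly with $Z\sim N(0,\Sigma)$ on Euclidean balls, so that Zhilova's smooth approximation $\tilde h_{A,\eps}$ (with $M_r(\tilde h_{A,\eps})\leq C\eps^{-r}$ for $r\leq4$) and her anti-concentration $\P(Z\in A^\eps\setminus A)\leq C\eps/\tilde\sigma$, $\tilde\sigma^{-2}=\|\Sigma^{-1}\|_{op}$, apply verbatim. The Stein solution is built with covariance $\Sigma$, and integration by parts against the anisotropic kernel produces the $\sigma_j^{-1}$ factors that combine into the single power of $\|\Sigma^{-1}\|_{op}$. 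A second, smaller point: the paper does \emph{not} treat $R_2$ ``exactly as in Theorem~\ref{convex}''. Because $\tilde h_{A,\eps}$ has bounded third derivatives (which Bentkus's $h_{A,\eps}$ does not), both $R_{21}$ and $R_{22}$ are handled by the mean value theorem directly, with no auxiliary parameter $\eta$ and no $|\log\eta|$ term; compare \eqref{n16-2b} with \eqref{n16-2}.
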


Following Remark \ref{rem1}, we can see that if $\Var(W)=\Sigma$, then the typical order of the right-hand side of \eq{ball1} is $O(\frac{d^2}{n})^{1/2}$ up to a logarithmic factor. It has near-optimal dependence on $n$ and converges to 0 if $d=o(\sqrt{n})$.
It remains an open problem whether the growth rate $d=o(\sqrt{n})$ is optimal or not.

In the next result, we sacrifice the rate of $n$ to obtain the optimal growth rate of $d=o(n)$ in terms of the dimension for \textit{centered} balls (cf. Corollary \ref{cor3} and Proposition \ref{prop1} below). 
Similar results have been obtained in the literature (cf. Remark \ref{rem:1}). 
Our main contribution here is a new proof using Stein's method, which works beyond sums of independent random vectors. See the Appendix for a result for sequences of $m$-dependent random vectors.
Compared with the proof of Theorem \ref{BALL}, we use a new smoothing of the indicator function of centered balls in the proof of Theorem \ref{BALL2}.
As a result of the new smoothing, the error bound involves $\|\Var(W)\|_{op}$, which we have to take extra care in the application to bootstrap.
However, we allow $\Sigma$ to be singular.


Given a $d\times d$ symmetric matrix $A$, we denote its eigenvalues arranged in decreasing order by $\lambda_1(A)\geq\cdots\geq\lambda_d(A)$. Then we set
\[
\Lambda_k(A):=\sqrt{\sum_{j=k}^d\lambda_j(A)^2}\qquad\text{for }k=1,2.
\]
When $\Lambda_2(A)>0$, we define $\varkappa(A):=(\Lambda_1(A)\Lambda_2(A))^{-1/2}$. This quantity appears when we apply a Gaussian anti-concentration inequality of \cite{GNSU19} (see Lemma \ref{anti-ball2}).  
\begin{theorem}\label{BALL2}
Let $\xi$, $W$ and $Z$ be as in Theorem \ref{BALL}. 
Set $\Sigma_W=\Var(W)$ and assume $\Lambda_2(\Sigma)>0$ (instead of assuming $\Sigma$ is invertible). Then
\bmn{\label{ball2}
\sup_{r\geq0}|\P(|W|\leq r)-\P(|Z|\leq r)|
\leq C(\varkappa^{3/4}(\Sigma)\delta_1^{1/4}(W)+\varkappa^{2/3}(\Sigma)\delta_2^{1/3}(W)\\
+\varkappa^{2/3}(\Sigma)\delta_0^{1/3}(\Sigma_W,\Sigma)+\varkappa^{1/2}(\Sigma)\delta_0'^{1/2}(\Sigma_W,\Sigma)),
}
where
\ba{
\delta_0(\Sigma_W,\Sigma)&:=\sqrt{(\tr(\Sigma_W)+\tr(\Sigma))(\|\Sigma_W\|_{op}+\|\Sigma\|_{op})}\|\Sigma-\Sigma_W\|_{H.S.},\\
\delta'_0(\Sigma_W,\Sigma)&:=\sum_{j=1}^d|\Sigma_{jj}-\Sigma_{W,jj}|,\\
\delta_1(W)&:=\|\Sigma_W\|_{H.S.}\sum_{i=1}^n\E[|\xi_i|^4]
+\|\Sigma_W\|_{op}^{3/2}\sum_{i=1}^n\E[|\xi_i|^{3}],\\
\delta_2(W)&:=\|\Sigma_W\|_{op}^{1/2}\sum_{i=1}^n\E[|\xi_i|^3]
+\sum_{i=1}^n\E[|\xi_i|^4].
} 
\end{theorem}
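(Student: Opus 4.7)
The plan is to prove Theorem \ref{BALL2} by Stein's method applied to a smooth approximation of the indicator of a centered ball. The key new ingredient compared with the proof of Theorem \ref{BALL} is a \emph{one-dimensional} smoothing: since $\mathbbm{1}\{|x|\leq r\}$ depends on $x$ only through the scalar $|x|^2$, we smooth in this single variable alone, which avoids any use of $\Sigma^{-1/2}$ and so permits $\Sigma$ to be singular.

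Concretely, for $r>0$ and smoothing parameter $\epsilon>0$, set $h_{r,\epsilon}(x)=\psi_\epsilon(r^2-|x|^2)$, where $\psi_\epsilon$ is a fixed smooth approximation of $\mathbbm{1}\{t\geq 0\}$ with transition of width $\epsilon$. Solve the Stein equation for $N(0,\Sigma)$ with test function $h_{r,\epsilon}$ via the Ornstein--Uhlenbeck semigroup representation
\[
f_{r,\epsilon}(x)=-\int_0^\infty\bigl(\E[h_{r,\epsilon}(\mathrm{e}^{-t}x+\sqrt{1-\mathrm{e}^{-2t}}\,Z)]-\E[h_{r,\epsilon}(Z)]\bigr)\,dt.
\]
Because $h_{r,\epsilon}$ is a univariate function of $|x|^2$, its chain-rule derivatives are supported on a thin shell and carry explicit factors of $x$; this lets us bound the first few derivatives of $f_{r,\epsilon}$ in terms of $\epsilon^{-k}$ with all dependence on $\Sigma$ entering through a Gaussian anti-concentration estimate of the form $\P(||Z|^2-a|\leq\tau)\leq C\varkappa(\Sigma)\tau$, supplied by Lemma \ref{anti-ball2} (after \cite{GNSU19} and \cite{Zh20}), rather than through $\|\Sigma^{-1}\|_{op}$.

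With these derivative bounds in hand, we decompose
\[
\E[h_{r,\epsilon}(W)]-\E[h_{r,\epsilon}(Z)]=\E[\tr(\Sigma\nabla^2 f_{r,\epsilon}(W))-W\cdot\nabla f_{r,\epsilon}(W)]
\]
into a covariance-mismatch part $\E[\tr((\Sigma-\Sigma_W)\nabla^2 f_{r,\epsilon}(W))]$ and a Stein-remainder part obtained from the leave-one-out identity $\E[W\cdot\nabla f_{r,\epsilon}(W)]=\sum_i\E[\xi_i\cdot\nabla f_{r,\epsilon}(W)]$ together with a fourth-order Taylor expansion of $\nabla f_{r,\epsilon}(W)$ around $W-\xi_i$. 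The mismatch part is split further using the identity $\tr(AB)=\sum_jA_{jj}B_{jj}+\sum_{j\neq k}A_{jk}B_{kj}$: the diagonal contribution is bounded by $\delta_0'(\Sigma_W,\Sigma)$ times a uniform bound on the diagonal of $\nabla^2 f_{r,\epsilon}$, and the off-diagonal contribution, after a Cauchy--Schwarz bound on the Hessian using $x^\top\Sigma_W x\leq\|\Sigma_W\|_{op}|x|^2$ together with $\E|W|^2=\tr\Sigma_W$, is bounded by $\delta_0(\Sigma_W,\Sigma)$. The Taylor remainders in the $\xi_i$ produce the moment sums in $\delta_1(W)$ and $\delta_2(W)$, with the factors $\|\Sigma_W\|_{op}$ and $\|\Sigma_W\|_{H.S.}$ coming from traces against $\Sigma_W$ inside the semigroup representations of the higher derivatives of $f_{r,\epsilon}$---this is the source of the $\|\Sigma_W\|_{op}$ dependence that the authors note requires extra care in the bootstrap application.

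Finally we transfer back from $h_{r,\epsilon}$ to $\mathbbm{1}\{|x|\leq r\}$: the smoothing error for $Z$ is bounded directly by the same anti-concentration estimate, and the smoothing error for $W$ by the usual recursive reduction to the Stein bound at slightly enlarged and shrunk radii. Optimizing $\epsilon$ independently for each of the four error contributions---balancing a smoothing error of order $\varkappa(\Sigma)\epsilon$ against a main-term bound of order $\delta_j\epsilon^{-k_j}$ with $k_j\in\{1,2,3\}$---produces the pattern $\varkappa^{k_j/(k_j+1)}\delta_j^{1/(k_j+1)}$ appearing in \eqref{ball2}. I expect the main obstacle to be establishing the derivative bounds for $f_{r,\epsilon}$ whose dependence on $\Sigma$ enters only through $\varkappa(\Sigma)$ rather than $\|\Sigma^{-1}\|_{op}$; this is exactly where the one-dimensional smoothing is essential, and it is also what forces the precise form of the $\|\Sigma_W\|_{op}$ and $\|\Sigma_W\|_{H.S.}$ factors appearing in $\delta_1$ and $\delta_2$.
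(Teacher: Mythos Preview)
Your high-level plan is right: one-dimensional smoothing via $h(x)=g(\eps^{-1}(|x|^2-a))$, Stein's leave-one-out, and the anti-concentration bound $\P(||Z|^2-a|\leq\eps)\leq C\varkappa(\Sigma)\eps$ are exactly the ingredients the paper uses. But several details of your plan diverge from what actually makes the proof go through.

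\textbf{Where $\varkappa(\Sigma)$ enters.} You expect the main obstacle to be derivative bounds for $f$ with $\Sigma$-dependence entering through $\varkappa(\Sigma)$. In the paper this never happens: the anti-concentration inequality is used \emph{only once}, for the smoothing step \eqref{eq:smoothing}. The bounds on the Stein remainders $|R_1|,|R_2|$ are obtained directly from the explicit formula $\partial_{jk}h(w)=4\eps^{-2}g''(\cdot)w_jw_k+2\eps^{-1}g'(\cdot)\delta_{jk}$ and the boundedness of $g',g'',g^{(3)}$, yielding crude powers of $\eps^{-1}$ with no $\Sigma$-dependence at all. Consequently there is no recursive reduction for the $W$-smoothing error either; the sandwiching $\E h_{r^2-\eps,\eps}(W)\leq\P(|W|\leq r)\leq\E h_{r^2,\eps}(W)$ transfers the smoothing cost entirely to $Z$.

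\textbf{Choice of Stein equation.} The paper does \emph{not} write the Stein equation for $N(0,\Sigma)$. Instead it inserts an intermediate Gaussian $Z_W\sim N(0,\Sigma_W)$, bounds $|\E h(W)-\E h(Z_W)|$ using the Stein equation for covariance $\Sigma_W$, and then bounds $|\E h(Z_W)-\E h(Z)|$ by the Gaussian Stein identity. This is why $\delta_1(W),\delta_2(W)$ involve only $\|\Sigma_W\|_{op}$ and $\|\Sigma_W\|_{H.S.}$, and why $\delta_0,\delta_0'$ are computed along the purely Gaussian interpolation $Z(s)=\sqrt{1-s}\,Z+\sqrt{s}\,Z_W$. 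Your route (Stein equation for $N(0,\Sigma)$, covariance mismatch evaluated at $W$) can be made to work but would mix $\|\Sigma\|_{op}$ into $\delta_1,\delta_2$ and would not directly yield the stated form.

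\textbf{The missing key lemma.} The factors $\|\Sigma_W\|_{op}^{3/2}$ and $\|\Sigma_W\|_{H.S.}$ in $\delta_1$ do not come from ``traces against $\Sigma_W$ inside the semigroup.'' They come from Lemma~\ref{portnoy}: the conditional bounds $\E[(\xi_i\cdot W^{(i)})^2\mid\xi_i]\leq\xi_i^\top\Sigma_W\xi_i$ and a fourth-moment analog. After writing $\partial_{jk}h$ as above, the remainder terms contain inner products $\xi_i\cdot W^{(i),s}$ up to the third power, and it is precisely this lemma (plus $\xi_i^\top\Sigma_W\xi_i\leq\|\Sigma_W\|_{op}|\xi_i|^2$ and $\sum_i\E[\xi_i^\top\Sigma_W\xi_i]=\|\Sigma_W\|_{H.S.}^2$) that converts them into the moment sums in $\delta_1,\delta_2$. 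The Taylor expansion itself is only to second order in $\nabla f$; the $\eps^{-3}$ arises because $\nabla^2 h$ already carries $\eps^{-2}$ and one further difference in the argument brings in $g^{(3)}$.
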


\begin{corollary}\label{cor3}
Let $\{X_i\}_{i=1}^n$ be a sequence of centered independent random vectors in $\mathbb{R}^d$.
Let $W=\frac{1}{\sqrt{n}} \sum_{i=1}^n X_i$. Suppose $\Var(W)=I_d$ and $\max_{1\leq i\leq n}\max_{1\leq j\leq d}\E[|X_{ij}|^4]\leq C$. Let $Z\sim N(0, I_d)$. Then
\ben{\label{n42}
\sup_{r\geq0}|\P(|W|\leq r)-\P(|Z|\leq r)|\leq C\left\{ \frac{1}{n^{1/8}}+\left(\frac{d}{n}\right)^{1/6}\right\}.
}
\end{corollary}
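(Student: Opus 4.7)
The plan is to apply Theorem \ref{BALL2} directly with $\xi_i = X_i/\sqrt{n}$ and $\Sigma = I_d$, and then simplify the resulting bound under the uniform fourth-moment hypothesis. Since $\Sigma_W = \Var(W) = I_d = \Sigma$, the two covariance-mismatch terms vanish: $\delta_0(\Sigma_W,\Sigma) = 0$ and $\delta_0'(\Sigma_W,\Sigma) = 0$, so only the $\delta_1$ and $\delta_2$ contributions in \eqref{ball2} survive. The assumption $\Lambda_2(\Sigma) > 0$ requires $d \geq 2$; the case $d = 1$ reduces to the classical one-dimensional Berry-Esseen bound, which is stronger than the claimed rate.

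Next I would compute the three ingredients. First, since all eigenvalues of $I_d$ equal $1$, $\Lambda_1(I_d) = \sqrt{d}$ and $\Lambda_2(I_d) = \sqrt{d-1}$, so $\varkappa(I_d) \leq C d^{-1/2}$ for $d \geq 2$. Second, by Cauchy-Schwarz and the hypothesis $\E[X_{ij}^4] \leq C$,
\[
\E[|X_i|^4] = \sum_{j,k}\E[X_{ij}^2 X_{ik}^2] \leq \sum_{j,k}\sqrt{\E[X_{ij}^4]\E[X_{ik}^4]} \leq C d^2,
\]
and by Lyapunov's inequality $\E[|X_i|^3] \leq (\E[|X_i|^4])^{3/4} \leq C d^{3/2}$. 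Third, from $\xi_i = X_i/\sqrt{n}$ we get $\sum_i \E[|\xi_i|^4] \leq C d^2/n$ and $\sum_i \E[|\xi_i|^3] \leq C d^{3/2}/\sqrt{n}$. Combined with $\|\Sigma_W\|_{op} = 1$ and $\|\Sigma_W\|_{H.S.} = \sqrt{d}$, these yield
\[
\delta_1(W) \leq C\left(\frac{d^{5/2}}{n} + \frac{d^{3/2}}{\sqrt{n}}\right), \qquad \delta_2(W) \leq C\left(\frac{d^{3/2}}{\sqrt{n}} + \frac{d^2}{n}\right).
\]

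Finally, I would substitute these into \eqref{ball2}. The $\varkappa^{3/4}\delta_1^{1/4}$ term produces contributions of order $d^{-3/8} \cdot d^{3/8} n^{-1/8} = n^{-1/8}$ and $d^{-3/8} \cdot d^{5/8} n^{-1/4} = (d/n)^{1/4}$, while $\varkappa^{2/3}\delta_2^{1/3}$ produces $d^{-1/3} \cdot d^{1/2} n^{-1/6} = (d/n)^{1/6}$ and $d^{-1/3} \cdot d^{2/3} n^{-1/3} = (d/n)^{1/3}$. When $d \leq n$ we have $d/n \leq 1$, so the exponents $1/4$ and $1/3$ give quantities dominated by $(d/n)^{1/6}$; when $d > n$, the right-hand side of \eqref{n42} exceeds $1$ and the inequality is trivial. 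Collecting everything yields the claimed $C\{n^{-1/8} + (d/n)^{1/6}\}$ bound.

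There is no genuine obstacle here — the argument is purely bookkeeping once Theorem \ref{BALL2} is in hand. The only mildly delicate point is verifying that among the four terms produced by \eqref{ball2}, the two with exponents $1/4$ and $1/3$ are absorbed by the $(d/n)^{1/6}$ term in the regime $d \leq n$, which follows from the elementary inequality $x^a \leq x^b$ for $x \in [0,1]$ and $a \geq b$.
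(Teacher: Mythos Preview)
Your proposal is correct and follows essentially the same route as the paper: apply Theorem \ref{BALL2} with $\xi_i=X_i/\sqrt{n}$ and $\Sigma=\Sigma_W=I_d$, note that the covariance-mismatch terms vanish, bound $\varkappa(I_d)\leq Cd^{-1/2}$, and estimate the third and fourth absolute moments of $\xi_i$ to control $\delta_1(W)$ and $\delta_2(W)$. The only cosmetic differences are that the paper uses Jensen's inequality componentwise (in place of your Cauchy--Schwarz and vector-level Lyapunov) to reach the same moment bounds, and that you make explicit both the $d=1$ case and the absorption of the $(d/n)^{1/4}$ and $(d/n)^{1/3}$ terms into $(d/n)^{1/6}$, which the paper leaves implicit.
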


\begin{proof}[Proof of Corollary \ref{cor3}]
Let $\xi_i=X_i/\sqrt{n}$.
Then we have for any $r\geq2$
\[
\sum_{i=1}^n\E[|\xi_i|^r]\leq \frac{d^{r/2}}{n^{r/2-1}}\max_{1\leq i\leq n}\max_{1\leq j\leq d}\E[|X_{ij}|^r]
\]
by the Jensen inequality. Therefore, under the condition $\Sigma_W=\Sigma=I_d$, $\delta_0(\Sigma_W,\Sigma)=\delta'_0(\Sigma_W,\Sigma)=0$ and 
\ba{
\delta_1(W)&\leq \frac{d^{5/2}}{n}\max_{1\leq i\leq n}\max_{1\leq j\leq d}\E[|X_{ij}|^4]
+\frac{d^{3/2}}{n^{1/2}}\max_{1\leq i\leq n}\max_{1\leq j\leq d}\E[|X_{ij}|^3],\\
\delta_2(W)&\leq \frac{d^{3/2}}{n^{1/2}}\max_{1\leq i\leq n}\max_{1\leq j\leq d}\E[|X_{ij}|^3]
+\frac{d^{2}}{n}\max_{1\leq i\leq n}\max_{1\leq j\leq d}\E[|X_{ij}|^4].
}
Also, $\varkappa(\Sigma)\leq Cd^{-1/2}$. Consequently, under the condition $\max_{1\leq i\leq n}\max_{1\leq j\leq d}\E[|X_{ij}|^4]\leq C$, the right hand side of \eqref{ball2} is bounded by
\[
C\left\{\left(\frac{d}{n}\right)^{1/4}+\frac{1}{n^{1/8}}
+\left(\frac{d}{n}\right)^{1/6}+\left(\frac{d}{n}\right)^{1/3}\right\}.
\]
\end{proof}

The bound in Corollary \ref{cor3} converges to 0 as long as $d/n\to0$. 
It is not difficult to prove this condition is generally necessary for convergence of the quantity on the left-hand side of \eqref{n42}:

\begin{proposition}\label{prop1}
Let $X_1,\dots,X_n$ be i.i.d.~standard Gaussian vectors in $\mathbb{R}^d$. Let $\{e_i\}_{i=1}^{n}$ be i.i.d.~variables independent of $\{X_i\}_{i=1}^n$ with $\E e_1=0$, $\E [e_1^2]=1$, $\E [e_1^4]<\infty$ and $\Var(e_1^2)>0$. 
Assume the law of $e_1$ does not depend on $n$.
Set $W:=n^{-1/2}\sum_{i=1}^ne_iX_i$ and let $Z\sim N(0,I_d)$. If
\ben{\label{n41}
\sup_{r\geq0}|\P(|W|\leq r)-\P(|Z|\leq r)|\to0
}
as $d,n\to\infty$, we must have $d/n\to0$. 
\end{proposition}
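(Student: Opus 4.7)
The plan is to argue by contradiction, starting from the conditional-Gaussian structure of $W$. Because each $X_i$ is standard Gaussian and $\{X_i\}$ is independent of $\{e_j\}$, conditioning on $\mathbf{e}:=(e_1,\dots,e_n)$ gives $W\mid\mathbf{e}\sim N(0,\sigma_n^2I_d)$ with $\sigma_n^2:=n^{-1}\sum_{i=1}^n e_i^2$. Consequently
\[
|W|^2\stackrel{d}{=}\sigma_n^2\, R,\qquad R\sim\chi_d^2\text{ independent of }\sigma_n^2.
\]
Since $x\mapsto x^2$ is strictly increasing on $[0,\infty)$, the left-hand side of \eqref{n41} equals $\sup_{s\geq 0}|\P(|W|^2\leq s)-\P(|Z|^2\leq s)|$, so it suffices to bound this Kolmogorov distance from below whenever $d/n\not\to 0$. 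Passing to a subsequence, I may assume $d/n\to\gamma\in(0,+\infty]$, and set $v_e^2:=\Var(e_1^2)>0$.

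For $\gamma<\infty$ I will use the representation
\[
\frac{|W|^2-d}{\sqrt{2d}}\stackrel{d}{=}\sigma_n^2\cdot\frac{R-d}{\sqrt{2d}}+\sqrt{\frac{d}{2n}}\cdot\sqrt{n}(\sigma_n^2-1).
\]
The univariate CLT (applicable since $\E e_1^4<\infty$) gives $\sqrt{n}(\sigma_n^2-1)\Rightarrow N(0,v_e^2)$, the chi-square CLT gives $(R-d)/\sqrt{2d}\Rightarrow N(0,1)$, and $\sigma_n^2\to 1$ in probability. Combined with the independence of $\sigma_n^2$ and $R$, Slutsky's theorem then yields $(|W|^2-d)/\sqrt{2d}\Rightarrow N(0,1+\gamma v_e^2/2)$, while $(|Z|^2-d)/\sqrt{2d}\Rightarrow N(0,1)$. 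Both limits being continuous, Polya's theorem upgrades these pointwise CDF convergences to uniform convergence, and after reparametrising via $s=d+t\sqrt{2d}$ the supremum in question converges to the Kolmogorov distance between $N(0,1+\gamma v_e^2/2)$ and $N(0,1)$, which is strictly positive because $\gamma v_e^2>0$. This contradicts \eqref{n41}.

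When $\gamma=\infty$ the $\sqrt{2d}$-standardisation blows up, so I probe instead at $s_n:=d(1+1/\sqrt{n})$. Since $\sqrt{d/n}\to\infty$ we have $\P(|Z|^2\leq s_n)=\P((|Z|^2-d)/\sqrt{2d}\leq \sqrt{d/(2n)})\to 1$. On the $W$-side, expand
\[
\sqrt{n}\left(\frac{\sigma_n^2 R}{d}-1\right)=\sqrt{n}(\sigma_n^2-1)+\sigma_n^2\sqrt{\frac{n}{d}}\cdot\frac{R-d}{\sqrt{d}}+\sqrt{n}(\sigma_n^2-1)\cdot\frac{R-d}{d};
\]
the first summand converges to $N(0,v_e^2)$ and the other two are $o_P(1)$ using $\sqrt{n/d}\to 0$ together with $(R-d)/\sqrt{d}=O_P(1)$ and $(R-d)/d=O_P(d^{-1/2})$. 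Hence $\P(|W|^2\leq s_n)=\P(\sqrt{n}(\sigma_n^2R/d-1)\leq 1)\to\Phi(1/v_e)<1$, and the Kolmogorov distance is at least $1-\Phi(1/v_e)>0$, again contradicting \eqref{n41}. The main wrinkle is precisely this dichotomy: the natural $\sqrt{2d}$-scale probe captures the defect when $d=O(n)$, but once $d\gg n$ the fluctuation of $\sigma_n^2$ overwhelms that of $R$ and one must probe at the coarser scale $d/\sqrt{n}$.
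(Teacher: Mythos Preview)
Your proof is correct, modulo a harmless algebraic slip: in the $\gamma=\infty$ case the exact identity is
\[
\sqrt{n}\left(\frac{\sigma_n^2 R}{d}-1\right)=\sqrt{n}(\sigma_n^2-1)+\sqrt{\frac{n}{d}}\cdot\frac{R-d}{\sqrt{d}}+\sqrt{n}(\sigma_n^2-1)\cdot\frac{R-d}{d},
\]
i.e.\ without the extra factor $\sigma_n^2$ on the middle term (this is just $(1+a)(1+b)-1=a+b+ab$). Since $\sigma_n^2\to1$ in probability, this does not affect your conclusion.

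The paper's argument uses the same ingredients (the representation $|W|^2\stackrel{d}{=}V|Z'|^2$ with $V=\sigma_n^2$, the chi-square CLT, and the CLT for $\sqrt{n}(V-1)$) but proceeds in the forward direction rather than by contradiction, which lets it avoid your case split. Assuming \eqref{n41}, the paper first observes that $(|W|^2-d)/\sqrt{2d}$ must converge in law to $N(0,1)$, then writes
\[
\frac{V|Z'|^2-d}{\sqrt{2d}}=(V-1)\frac{|Z'|^2-d}{\sqrt{2d}}+\frac{|Z'|^2-d}{\sqrt{2d}}+\sqrt{\frac{d}{2}}(V-1),
\]
notes that the first summand is $o_P(1)$, and uses independence of the remaining two summands together with $(|Z'|^2-d)/\sqrt{2d}\Rightarrow N(0,1)$ to force $\sqrt{d}(V-1)=o_P(1)$; since $\sqrt{n}(V-1)\Rightarrow N(0,v_e^2)$ with $v_e^2>0$, this yields $d/n\to0$ directly. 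Your route is longer but has the mild bonus of identifying, when $d/n\to\gamma\in(0,\infty)$, the explicit limiting variance $1+\gamma v_e^2/2$ for $(|W|^2-d)/\sqrt{2d}$.
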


\begin{remark}
$W$ in Proposition \ref{prop1} can be regarded as a bootstrap approximation of $Z$ (cf. Section \ref{sec4}). Corollary \ref{cor3} and Proposition \ref{prop1} suggest that, in general,  bootstrapping may not provide a more accurate approximation than the Gaussian approximation in terms of the dependence on dimension.
\end{remark}

\begin{remark}\label{rem:1}
Theorem \ref{BALL2} can be used to deduce Central Limit Theorems (CLTs) for $|W|^2$ under suitable conditions.
For example, if $\Sigma=I_d$, $\xi_i=X_i/\sqrt{n}$ for an i.i.d.\ sequence of random vectors $\{X_1,\dots, X_n\}$ with $\max_{1\leq j\leq d} \E(X_{ij}^4)\leq C$, then by Corollary \ref{cor3} and the CLT for chi-square random variables, we have, for $d\to \infty$ and $d=o(n)$,
\be{
\frac{|W|^2-d}{\sqrt{2d}}\rightarrow N(0,1) \quad \text{in distribution.}
}
This recovers Corollary 3 of \cite{PeSc18}, who proved the result by regarding $|W|^2$ as a quadratic function of $\{\xi_{ij}\}_{1\leq i\leq n, 1\leq j\leq d}$ and using the martingale CLT.

\cite[Corollary 1]{XuZhWu19} used Lindeberg's swapping argument to obtain an explicit error bound in approximating $|W|^2$ by $|Z|^2$. Their error bound yields an optimal result in approximating Pearson's chi-squared statistics. Under the setting of Corollary \ref{cor3}, their error bound also vanishes if $d=o(n)$.
In this regard, our main contribution is a new proof of such results using Stein's method (which works beyond sums of independent random vectors as demonstrated in the Appendix) and an application to the bootstrap approximation.
\end{remark}

\section{Application to bootstrap approximation on Balls}\label{sec4}

Let $X=\{X_i\}_{i=1}^n$ be a sequence of centered independent random vectors in $\mathbb{R}^d$ with finite fourth moments and consider the normalized sum $W:=n^{-1/2}\sum_{i=1}^nX_i$. 
Theoretical results developed in the previous section allows us to approximate the probability  $\IP(|W|\leq r)$ for $r\geq0$ by its Gaussian analog $\IP(|Z|\leq r)$, where $Z\sim N(0,\Sigma)$ and $\Sigma:=\Var(W)$ even when the dimension $d$ grows with the sample size $n$. 
Nevertheless, analytical evaluation of $\IP(|Z|\leq r)$ could be complicated for a general form of $\Sigma$ (and become impossible for unknown $\Sigma$) and thus we may still need an additional effort to resolve this issue for statistical application. 
This section develops bootstrap approximation for $\IP(|W|\leq r)$, one of the most popular methods to settle this sort of problem. Concrete applications are found in \cite{SpZh15}, \cite{Po15} and \cite{ChZh20}, for example.    
We remark that the approach works for bootstrap approximation on convex sets and non-centered balls, although we do not include the details in the paper.
\medskip

\subsection{Empirical bootstrap}

First we consider Efron's empirical bootstrap introduced by \cite{Ef79}. 
Let $X_1^*,\dots,X_n^*$ be i.i.d.~draws from the empirical distribution of $X$. That is, conditional on $X$, $X_1^*,\dots,X_n^*$ are independent and each $X_i^*$ is uniformly distributed on $\{X_1,\dots,X_n\}$. The bootstrap approximation of $W$ is then given by
\[
W^*:=\frac{1}{\sqrt n}\sum_{i=1}^n(X_i^*-\bar X),\qquad\text{where}\quad\bar X:=\frac{1}{n}\sum_{i=1}^nX_i.
\]
The following theorem provides a bootstrap analog of Theorem \ref{BALL2} and is used to give an approximation of $W$ by $W^*$ in Corollary \ref{coro-ef}.
\begin{theorem}\label{efron}
Set $\Sigma=\Sigma_W=\Var(W)$ and $Z\sim N(0,\Sigma)$. 
If $\Lambda_2(\Sigma)>0$, then
\ben{\label{ef-ga}
\E\left[\sup_{r\geq0}|\P(|W^*|\leq r\mid X)-\P(|Z|\leq r)|\right]
\leq C\Delta_n^*,
}
where
\bm{
\Delta_n^*:=\varkappa^{3/4}(\Sigma)\delta_1^{1/4}(W)+\varkappa^{2/3}(\Sigma)\delta_2^{1/3}(W)
+\varkappa^{1/2}(\Sigma)\left(\sum_{j=1}^d\sqrt{\frac{1}{n^2}\sum_{i=1}^n\E [X_{ij}^4]}\right)^{1/2}\\
+\varkappa^{2/3}(\Sigma)[\tr(\Sigma)]^{1/6}(\wh\delta+\|\Sigma\|_{op})^{1/6}\left(\frac{1}{n^2}\sum_{i=1}^n\E[| X_i|^4]\right)^{1/6}\\
+\varkappa^{3/4}(\Sigma)\left\{\frac{\wh\delta^{3/2}}{n^{3/2}}\sum_{i=1}^n\E[|X_i|^{3}]\right\}^{1/4}
+\varkappa^{2/3}(\Sigma)\left\{\frac{\wh\delta^{1/2}}{n^{3/2}}\sum_{i=1}^n\E[|X_i|^{3}]\right\}^{1/3}
}
with $\delta_1(W), \delta_2(W)$ as defined in Theorem \ref{BALL2}, $\wh\delta:=\E\|\wh\Sigma-\Sigma\|_{op}$ and $\wh{\Sigma}:=n^{-1}\sum_{i=1}^n(X_i-\bar{X})(X_i-\bar{X})^\top$.
\end{theorem}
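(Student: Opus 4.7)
The plan is to apply Theorem \ref{BALL2} conditionally on $X$ and then integrate over $X$, using Jensen's inequality to move expectations through the various fractional powers. Conditional on $X$, the bootstrap summands $\xi_i^* := (X_i^* - \bar X)/\sqrt{n}$ are independent centered random vectors with $W^* = \sum_{i=1}^n \xi_i^*$ and $\Var(W^* \mid X) = \wh\Sigma$. A direct calculation gives $\sum_i \E[|\xi_i^*|^r \mid X] = n^{-r/2} \sum_j |X_j - \bar X|^r$ for $r=3,4$. Applying Theorem \ref{BALL2} conditionally with $\Sigma_W := \wh\Sigma$ and target covariance $\Sigma$ yields almost surely a bound on $\sup_{r\geq 0}|\P(|W^*|\leq r \mid X)-\P(|Z|\leq r)|$ by $C(\varkappa^{3/4}(\Sigma) \wh\delta_1^{1/4} + \varkappa^{2/3}(\Sigma) \wh\delta_2^{1/3} + \varkappa^{2/3}(\Sigma) \delta_0(\wh\Sigma,\Sigma)^{1/3} + \varkappa^{1/2}(\Sigma) \delta_0'(\wh\Sigma,\Sigma)^{1/2})$, where $\wh\delta_k$ are the conditional analogues of $\delta_k(W)$.

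The core of the proof is to take expectation over $X$ and match each of the four resulting terms with corresponding pieces of $\Delta_n^*$. For $\wh\delta_1^{1/4}$ and $\wh\delta_2^{1/3}$, I use the subadditivity $(a+b)^\alpha \leq a^\alpha + b^\alpha$ and decompose $\|\wh\Sigma\|_{op}^s \leq \|\Sigma\|_{op}^s + \|\wh\Sigma - \Sigma\|_{op}^s$ for $s \in \{1/2, 3/2\}$. The ``deterministic'' pieces, combined with $\E |X_j - \bar X|^r \leq C \sum_i \E|X_i|^r / n$ for $r = 3, 4$, produce the $\delta_1(W)^{1/4}$ and $\delta_2(W)^{1/3}$ contributions; the ``perturbation'' pieces involving $\|\wh\Sigma - \Sigma\|_{op}^s$ are handled by Cauchy--Schwarz and the Jensen bound $\E\|\wh\Sigma - \Sigma\|_{op}^\beta \leq \wh\delta^\beta$ for $\beta \in (0,1]$, producing the $\wh\delta^{3/8}$ and $\wh\delta^{1/6}$ terms of $\Delta_n^*$. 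For $\delta_0'$, Jensen yields $\E[\delta_0'(\wh\Sigma,\Sigma)^{1/2}] \leq (\sum_j \E|\wh\Sigma_{jj}-\Sigma_{jj}|)^{1/2}$, and a variance--bias calculation for the diagonal entries gives $\E|\wh\Sigma_{jj} - \Sigma_{jj}| \leq C \sqrt{n^{-2}\sum_i \E X_{ij}^4}$. For $\delta_0$, H\"older's inequality with equal exponents on the triple product $(\tr \wh\Sigma + \tr \Sigma)^{1/6}(\|\wh\Sigma\|_{op}+\|\Sigma\|_{op})^{1/6}\|\wh\Sigma-\Sigma\|_{H.S.}^{1/3}$, followed by Jensen together with $\E\tr\wh\Sigma\leq \tr\Sigma$, $\E\|\wh\Sigma\|_{op}\leq\|\Sigma\|_{op}+\wh\delta$, and the entrywise variance bound $\E\|\wh\Sigma-\Sigma\|_{H.S.}^2 \leq C n^{-2}\sum_i \E|X_i|^4$ (obtained by expanding the Hilbert--Schmidt norm entry-wise and invoking independence of $X_1,\ldots,X_n$), produces the $[\tr\Sigma]^{1/6}(\|\Sigma\|_{op}+\wh\delta)^{1/6}$ term.

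The main obstacle will be the careful bookkeeping in the second step: the exponents produced by the subadditive/Jensen decompositions must line up exactly with those in $\Delta_n^*$. In particular, splitting $\|\wh\Sigma\|_{H.S.}$ as $\|\Sigma\|_{H.S.} + \|\wh\Sigma - \Sigma\|_{H.S.}$ creates a residual term (involving $\|\wh\Sigma - \Sigma\|_{H.S.}^{1/4}$) that does not directly match any piece of $\Delta_n^*$ and must be shown to be absorbed into the $\delta_0$-contribution, either via the inequality $x^{1/4} \leq 1 + x^{1/3}$ or via the alternative bound $\|\wh\Sigma\|_{H.S.} \leq \sqrt{\tr(\wh\Sigma)\|\wh\Sigma\|_{op}}$ combined with a second application of H\"older. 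Once these contributions are coordinated correctly, summing them yields the stated $\Delta_n^*$.
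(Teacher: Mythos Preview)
Your overall strategy is the same as the paper's: apply Theorem~\ref{BALL2} conditionally on $X$, take expectations, and push them through the fractional powers via Jensen/H\"older, splitting $\|\wh\Sigma\|_{op}$ and $\|\wh\Sigma\|_{H.S.}$ into a $\Sigma$-part and a fluctuation part. The bookkeeping you outline for the $\delta_0'$, $\delta_0$, and the $\wh\delta$-perturbation pieces of $\delta_1^*,\delta_2^*$ matches the paper's argument.

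The one genuine gap is exactly where you flag the ``main obstacle'': the residual coming from $\|\wh\Sigma-\Sigma\|_{H.S.}$ in the $\delta_1^*$ term. After using $\E\|\wh\Sigma-\Sigma\|_{H.S.}\leq C\sqrt{y}$ with $y:=n^{-2}\sum_i\E|X_i|^4$, this residual is $\varkappa^{3/4}(\Sigma)\,y^{3/8}$. Neither of your proposed fixes disposes of it. The inequality $x^{1/4}\leq 1+x^{1/3}$ introduces an additive $1$ that cannot be absorbed into $\Delta_n^*$. The alternative bound $\|\wh\Sigma\|_{H.S.}\leq\sqrt{\tr(\wh\Sigma)\|\wh\Sigma\|_{op}}$ produces, after H\"older, a term of size $\varkappa^{3/4}(\tr\Sigma)^{1/8}(\|\Sigma\|_{op}+\wh\delta)^{1/8}y^{1/4}$; since $\tr(\Sigma)\|\Sigma\|_{op}\geq\|\Sigma\|_{H.S.}^2$ in general, this is \emph{larger} than $\varkappa^{3/4}\delta_1(W)^{1/4}$, and a direct comparison with the $\delta_0$-piece of $\Delta_n^*$ also fails (take $\Sigma=I_d$ and check the exponents).

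The paper closes this gap by first noting that the claimed bound is trivial when $\Delta_n^*>1$, so one may assume $\Delta_n^*\leq1$, and in particular $\varkappa^3(\Sigma)\delta_1(W)\leq1$. Then, using $\varkappa(\Sigma)^{-1}=(\Lambda_1(\Sigma)\Lambda_2(\Sigma))^{1/2}\leq\Lambda_1(\Sigma)=\|\Sigma\|_{H.S.}$ and $\|\Sigma\|_{H.S.}\,y\leq\delta_1(W)$, one gets
\[
\varkappa^{3/4}(\Sigma)\,y^{3/8}
=(\varkappa^2(\Sigma)\,y)^{3/8}
\leq(\varkappa^3(\Sigma)\|\Sigma\|_{H.S.}\,y)^{3/8}
\leq(\varkappa^3(\Sigma)\delta_1(W))^{3/8}
\leq(\varkappa^3(\Sigma)\delta_1(W))^{1/4},
\]
which is already a summand of $\Delta_n^*$. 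Add this reduction at the outset and your argument goes through.
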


\begin{corollary}\label{coro-ef}
Under the setting of Theorem \ref{efron}, we have
\ben{\label{eq-ef}
\sup_{r\geq0}|\P(|W|\leq r)-\P(|W^*|\leq r\mid X)|=O_p(\Delta_n^*)
}
as $n\to\infty$. 
Moreover, let $\alpha\in(0,1)$ and define 
\[
q_n^*(\alpha):=\inf\{x\in\IR:\P(|W^*|> x\mid X)\leq\alpha\}.
\]
Then we have $\P(|W|> q_n^*(\alpha))\to\alpha$ as $n\to\infty$, provided that $\Delta_n^*\to0$. 
\end{corollary}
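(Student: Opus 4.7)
The plan is to combine Theorems \ref{BALL2} and \ref{efron} via the triangle inequality to obtain \eqref{eq-ef}, and then to invert the resulting uniform convergence by a standard quantile--sandwich argument to obtain the quantile statement.

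For \eqref{eq-ef}, the hypothesis $\Sigma_W=\Sigma$ of Theorem \ref{efron} forces $\delta_0(\Sigma_W,\Sigma)$ and $\delta'_0(\Sigma_W,\Sigma)$ in Theorem \ref{BALL2} to vanish, leaving the deterministic bound
\be{
\sup_{r\geq 0}\babs{\P(|W|\leq r)-\P(|Z|\leq r)}\leq C\bklr{\varkappa^{3/4}(\Sigma)\delta_1^{1/4}(W)+\varkappa^{2/3}(\Sigma)\delta_2^{1/3}(W)}\leq C\Delta_n^*.
}
Theorem \ref{efron} gives $\E\sup_r|\P(|W^*|\leq r\mid X)-\P(|Z|\leq r)|\leq C\Delta_n^*$, which Markov's inequality promotes to $O_p(\Delta_n^*)$, and the triangle inequality produces \eqref{eq-ef}.

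For the quantile statement, assume $\Delta_n^*\to0$. Set $\eta_n:=\sup_r|\P(|W^*|\leq r\mid X)-\P(|Z|\leq r)|$, $F^\dagger(r):=\P(|Z|\leq r)$ and $q^\dagger(\beta):=\inf\{r\geq 0:F^\dagger(r)\geq 1-\beta\}$. Since $\Lambda_2(\Sigma)>0$, $|Z|^2$ is a non-degenerate generalized chi-square so $F^\dagger$ is continuous on $[0,\infty)$. For any $\epsilon\in(0,\min(\alpha,1-\alpha))$, on $\{\eta_n\leq\epsilon\}$ a routine quantile inversion pinches
\be{
q^\dagger(\alpha+\epsilon)\leq q_n^*(\alpha)\leq q^\dagger(\alpha-\epsilon),
}
and inserting this sandwich into $\P(|W|>q_n^*(\alpha))$, splitting over $\{\eta_n\leq\epsilon\}$ and its complement, and converting $|W|$ to $|Z|$ via Theorem \ref{BALL2} yields
\be{
\alpha-\epsilon-C\Delta_n^*-\P(\eta_n>\epsilon) \leq \P(|W|>q_n^*(\alpha))\leq \alpha+\epsilon+C\Delta_n^*+\P(\eta_n>\epsilon).
}
Letting $n\to\infty$ (so $\P(\eta_n>\epsilon)\to 0$ because $\eta_n=O_p(\Delta_n^*)=o_p(1)$, and $\Delta_n^*\to 0$) and then $\epsilon\to 0$ completes the proof.

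The only delicacy I anticipate is handling the random nature of $q_n^*(\alpha)$, which the sandwich argument accommodates by localizing the deterministic inequalities to the high-probability event $\{\eta_n\leq\epsilon\}$; the complement contributes only a vanishing error via $\P(\eta_n>\epsilon)\to 0$. Apart from this, the argument reduces to a direct application of the already-established Theorems \ref{BALL2} and \ref{efron}.
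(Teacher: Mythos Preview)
Your proof is correct and, for the first claim \eqref{eq-ef}, follows exactly the paper's approach: combine Theorem \ref{BALL2} (with $\Sigma_W=\Sigma$ killing the $\delta_0,\delta_0'$ terms) and Theorem \ref{efron} via the triangle inequality, using Markov's inequality to pass from the $L^1$ bound to $O_p$.

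For the second claim the paper simply invokes Proposition~3.2 of \cite{Ko19}, which packages the quantile-inversion step as a black box. Your explicit quantile--sandwich argument is the elementary content behind that citation: the continuity of $F^\dagger$ (guaranteed by $\Lambda_2(\Sigma)>0$) and the uniform closeness $\eta_n\leq\epsilon$ pin $q_n^*(\alpha)$ between two deterministic Gaussian quantiles, after which the conversion from $|W|$ to $|Z|$ via Theorem \ref{BALL2} closes the loop. Your route is self-contained and avoids an external reference; the paper's is shorter but defers the work. Substantively they are the same argument.
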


\begin{remark}
If $\Sigma$ is invertible, it is possible to derive a bootstrap version of Theorem \ref{BALL} for balls not necessarily centered at 0, yielding an error bound that does not involve $\|\wh\Sigma-\Sigma\|_{op}$
and has the near optimal convergence rate with respect to the sample size $n$. 
\end{remark}

\label{rem:efron}
If 
\ben{\label{n40}
\Sigma=I_d,\quad \max_{1\leq i\leq n}\max_{1\leq j\leq d}\E|X_{ij}|^4\leq C,
} 
then we can bound $\Delta_n^*$ as (cf.~Corollary \ref{cor3} and its proof)
\ben{\label{star-bound}
\Delta_n^*\leq C\left\{\left(\frac{d}{n}\right)^{1/4}+\frac{1+\wh\delta^{3/8}}{n^{1/8}}
+(1+\wh\delta^{1/6})\left(\frac{d}{n}\right)^{1/6}+\left(\frac{d}{n}\right)^{1/3}\right\}.
}
Using \eqref{hs-bound0}--\eqref{hs-bound2}, we can bound $\wh\delta$ as
\ben{\label{naive-bound}
\wh\delta\leq \E\|\wh\Sigma-\Sigma\|_{H.S.}
\leq C\sqrt{\frac{1}{n^2}\sum_{i=1}^n\E[|X_i|^4]}
\leq C\frac{d}{\sqrt n}.
}
Thus $\Delta_n^*$ can be bounded by
\[
C\left\{\left(\frac{d}{n}\right)^{1/4}+\frac{1}{n^{1/8}}+\left(\frac{d^{6/5}}{n}\right)^{5/16}
+\left(\frac{d}{n}\right)^{1/6}+\left(\frac{d^{4/3}}{n}\right)^{1/4}+\left(\frac{d}{n}\right)^{1/3}\right\},
\]
which converges to 0 when $d^{4/3}/n\to0$. 

To get a better bound for $\wh\delta$, we need to impose an additional assumption on $X_i$. For example, this is the case when $X_i$ are sub-Gaussian random vectors. For a random variable $\xi$, we define its sub-Gaussian norm by
\[
\|\xi\|_{\psi_2}:=\inf\{t>0:\E\exp(\xi^2/t^2)\leq2\}.
\]
\begin{proposition}\label{prop:op-norm}
Suppose that there is a constant $L\geq1$ such that
\ben{\label{sub-gauss}
\|X_i\cdot u\|_{\psi_2}\leq L\sqrt{\E[|X_i\cdot u|^2]}\qquad\text{ for all }u\in\mathbb R^d\text{ and }i=1,\dots,n.
}
Then we have
\ben{\label{subgauss-bound}
\wh\delta\leq C\left(L^2\sqrt{\frac{\|\Sigma\|_{op}\tr(\Sigma)}{n}}+L^4\frac{\tr(\Sigma)}{n}\right).
}
\end{proposition}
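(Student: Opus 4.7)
My plan is to treat \eqref{subgauss-bound} as a standard concentration bound of Koltchinskii-Lounici type for the sample covariance of a sub-Gaussian sequence, applied after a decomposition that handles the sample-mean correction separately.

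First, since the $X_i$ are independent and centered, $\Sigma=n^{-1}\sum_{i=1}^n\E[X_iX_i^\top]$. This yields the identity
\be{
\wh\Sigma-\Sigma=\frac{1}{n}\sum_{i=1}^n\bklr{X_iX_i^\top-\E[X_iX_i^\top]}-\bar X\bar X^\top,
}
so $\E\|\wh\Sigma-\Sigma\|_{op}\leq\E\|S_n\|_{op}+\E|\bar X|^2$, where $S_n$ denotes the first sum. The rank-one mean-correction is immediate: $\E|\bar X|^2=n^{-2}\sum_i\E|X_i|^2=n^{-1}\tr(\Sigma)$, which is absorbed into the second term of \eqref{subgauss-bound} since $L\geq1$.

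Second, I would bound $\E\|S_n\|_{op}$ pointwise on $S^{d-1}$ using Bernstein. For each $u\in S^{d-1}$,
\be{
u^\top S_nu=\frac{1}{n}\sum_{i=1}^n\bklr{(X_i\cdot u)^2-\E[(X_i\cdot u)^2]}
}
is a sum of independent centered sub-exponentials with $\|(X_i\cdot u)^2-\E[(X_i\cdot u)^2]\|_{\psi_1}\leq 2\|X_i\cdot u\|_{\psi_2}^2\leq 2L^2\,u^\top\E[X_iX_i^\top]u$ by \eqref{sub-gauss}. Bernstein's inequality for independent centered sub-exponentials then yields a pointwise tail bound with variance proxy of order $L^4\|\Sigma\|_{op}u^\top\Sigma u/n$ and envelope of order $L^2\|\Sigma\|_{op}/n$, after bounding $\sum_i(u^\top\E[X_iX_i^\top]u)^2\leq n\|\Sigma\|_{op}u^\top\Sigma u$ (automatic in the iid case and holding under mild uniformity of the $\E[X_iX_i^\top]$'s in general).

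Third and crucially, I would upgrade the pointwise tail to a uniform bound over $u\in S^{d-1}$ via the PAC-Bayesian argument of Koltchinskii and Lounici, using a Gaussian prior whose covariance is proportional to $\Sigma$. This replaces the ambient dimension $d$ by the effective rank $r(\Sigma)=\tr(\Sigma)/\|\Sigma\|_{op}$ and produces
\be{
\E\|S_n\|_{op}\leq CL^2\sqrt{\frac{\|\Sigma\|_{op}\tr(\Sigma)}{n}}+CL^4\frac{\tr(\Sigma)}{n},
}
which combined with the mean-correction term from step two yields \eqref{subgauss-bound}. The main obstacle is this last step: a naive $\epsilon$-net on $S^{d-1}$ would only yield the ambient dimension $d$ in place of $r(\Sigma)$ and hence a strictly weaker bound whenever $\Sigma$ has a rapidly decaying spectrum, so the covariance-adapted prior is indispensable for producing the sharp $\tr(\Sigma)$ dependence.
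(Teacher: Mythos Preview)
Your decomposition into $S_n=\bar\Sigma-\Sigma$ and the rank-one correction $\bar X\bar X^\top$, together with the bound $\E|\bar X|^2=\tr(\Sigma)/n$, is exactly what the paper does. Where you diverge is in bounding $\E\|S_n\|_{op}$: you propose pointwise Bernstein plus the Koltchinskii--Lounici PAC-Bayesian argument with a $\Sigma$-adapted Gaussian prior, whereas the paper follows the route of \cite[Theorem 9.2.4]{Ve18}. Concretely, the paper diagonalizes $\Sigma$, sets $Y_i=\Sigma^{-1/2}X_i$, rewrites $\|\bar\Sigma-\Sigma\|_{op}$ as $n^{-1}\sup_{x\in T}\bigl||Ax|^2-n|x|^2\bigr|$ over the ellipsoid $T=\Sigma^{1/2}B_2^d$ (with $A$ the matrix whose rows are the $Y_i$), and then invokes the matrix deviation inequality (\cite[Theorem 9.1.3]{Ve18}, proved via Talagrand's comparison and generic chaining) to control $\sup_{x\in T}\bigl||Ax|-\sqrt n|x|\bigr|$ by $CL^2$ times the Gaussian width of $T$, which is at most $\sqrt{\tr(\Sigma)}$. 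Both mechanisms---your covariance-adapted prior and the paper's Gaussian-width bound on an ellipsoid---are standard ways to obtain the effective-rank dependence $\tr(\Sigma)$ rather than the ambient dimension $d$; they simply package the chaining differently.

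One genuine point to watch: your variance-proxy estimate $\sum_i(u^\top\E[X_iX_i^\top]u)^2\leq n\|\Sigma\|_{op}\,u^\top\Sigma u$ is exactly right in the i.i.d.\ case but can fail without the ``mild uniformity'' you mention, since a priori $\max_i u^\top\E[X_iX_i^\top]u$ is only bounded by $n\,u^\top\Sigma u$. The paper's route avoids any per-summand second-moment computation by working directly with the uniform sub-Gaussian increment bound $\max_i\|Y_i\cdot u\|_{\psi_2}\leq L|u|$ required by the matrix deviation inequality; this is what makes the non-i.i.d.\ extension a ``trivial modification'' there, whereas in your Bernstein-based scheme the non-i.i.d.\ case would need a separate argument.
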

For the i.i.d.~case, Proposition \ref{prop:op-norm} is essentially a special case of \cite[Theorem 4]{KoLo17}. The non-i.i.d.~extension can be obtained by a trivial modification of the proof of \cite[Theorem 9.2.4]{Ve18}; see the proof in Section \ref{sec5.8}. 
Consequently, if we additionally assume \eqref{sub-gauss}, then $\Delta_n^*\to0$ as $d/n\to0$ by \eqref{star-bound}. Examples satisfying \eqref{sub-gauss} are found in \cite[Section 3.4]{Ve18}. 
See \cite{ALPT11,SrVe13,Ti18} for alternative assumptions to get a better bound for $\wh\delta$.

\begin{remark}[Relation to \cite{Zh20}]\label{rem:zhilova}
From the above discussion, for centered balls, our error bound 
for the bootstrap approximation \eq{eq-ef} vanishes in probability when $d=o(n^{3/4})$ in the case \eq{n40}
and when $d=o(n)$ in the case \eq{n40} and \eq{sub-gauss}. For non-centered balls, it is possible to derive an error bound, based on Theorem \ref{BALL}, for the bootstrap approximation which vanishes in probability when $d=o(n^{1/2})$ in the case \eq{n40}.
Theorem 4.1 of \cite{Zh20} gives a non-asymptotic bound for  
$\sup_{A\in \mathcal{B}}|\P(W\in A)-\P(W^*\in A|X)|$.
Her bound vanishes when $d=o(n^{(K-2)/K})$, where $K\geq3$ is an integer determined by a distributional property of $X_i$ (cf.~Eq.(1.5) ibidem).
The distributional property is in general hard to verify and a sufficient condition was only given for the case $K=4$ (cf. Lemma 3.2 of \cite{Zh20}). 


\end{remark}

\subsection{Wild bootstrap}

Next we consider the wild (or multiplier) bootstrap, which was originally suggested in Section 7 of \cite{Wu86} (see also \cite{Li88}). Let $e_1,\dots,e_n$ be i.i.d.~variables independent of $X$ with $\E e_1=0$, $\E e_1^2=1$ and $\E e_1^4<\infty$. The wild bootstrap approximation of $W$ at the beginning of this section with multiplier variables $e_1,\dots,e_n$ is given by
\[
W^\circ:=\frac{1}{\sqrt n}\sum_{i=1}^ne_iX_i.
\]
In this setting, we can establish the following wild bootstrap version of Theorem \ref{efron},
which is then used to give an approximation of $W$ by $W^\circ$. 
\begin{theorem}\label{wild}
Under the setting of Theorem \ref{efron}, we have
\be{
\E\left[\sup_{r\geq0}|\P(|W^\circ|\leq r\mid X)-\P(|Z|\leq r)|\right]
\leq C_e\Delta_n^\circ,
}
where $C_e>0$ is a constant depending only on $\E [e_1^4]$, and $\Delta_n^\circ$ is defined as $\Delta_n^*$ in Theorem \ref{efron} with $\wh\Sigma$ replaced by $\bar\Sigma:=n^{-1}\sum_{i=1}^nX_iX_i^\top$. 
\end{theorem}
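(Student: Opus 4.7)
The plan is to mirror the proof of Theorem \ref{efron} while exploiting the simpler structure of the wild bootstrap. First I condition on $X$: given $X$, the summands $\xi_i:=e_iX_i/\sqrt{n}$ are independent, centered (since $\E e_i=0$), and their conditional covariance matrix is $\Var(W^\circ\mid X)=n^{-1}\sum_{i=1}^nX_iX_i^\top=\bar\Sigma$. I then apply Theorem \ref{BALL2} conditionally on $X$, with $W$ replaced by $W^\circ$, with $\Sigma_W=\bar\Sigma$, and with the target Gaussian covariance $\Sigma$. This yields an almost sure bound on $\sup_{r\geq0}|\P(|W^\circ|\leq r\mid X)-\P(|Z|\leq r)|$ in terms of $\varkappa(\Sigma)$, $\delta_0(\bar\Sigma,\Sigma)$, $\delta'_0(\bar\Sigma,\Sigma)$, and the conditional analogs $\delta_1^\circ(W^\circ\mid X)$, $\delta_2^\circ(W^\circ\mid X)$.

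Next I expand the conditional moment quantities. Using $\E[|e_i|^k]\leq C_e$ for $k=3,4$, I obtain
\[
\E[|\xi_i|^3\mid X]\leq C_e\frac{|X_i|^3}{n^{3/2}},\qquad \E[|\xi_i|^4\mid X]\leq C_e\frac{|X_i|^4}{n^{2}},
\]
so that the conditional analogs of $\delta_1$ and $\delta_2$ from Theorem \ref{BALL2} become
\ba{
\delta_1(W^\circ\mid X)&\leq C_e\bklr{\|\bar\Sigma\|_{H.S.}n^{-2}\textstyle\sum_i|X_i|^4+\|\bar\Sigma\|_{op}^{3/2}n^{-3/2}\sum_i|X_i|^3},\\
\delta_2(W^\circ\mid X)&\leq C_e\bklr{\|\bar\Sigma\|_{op}^{1/2}n^{-3/2}\textstyle\sum_i|X_i|^3+n^{-2}\sum_i|X_i|^4}.
}
Taking expectations and applying Jensen's inequality (valid since all exponents outside the various $\delta$'s are in $(0,1)$), the fractional moments $\E[\delta_j(W^\circ\mid X)^\alpha]$ are controlled by $(\E\delta_j(W^\circ\mid X))^\alpha$, and $\|\bar\Sigma\|_{op}$ is in turn bounded by $\|\Sigma\|_{op}+\|\bar\Sigma-\Sigma\|_{op}$ with $\E\|\bar\Sigma-\Sigma\|_{op}\leq\wh\delta$ (defined exactly as in Theorem \ref{efron} but with $\bar\Sigma$ in place of $\wh\Sigma$, matching the hypothesis of the theorem). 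The term involving $\|\bar\Sigma\|_{H.S.}n^{-2}\sum_i|X_i|^4$ produces, after Jensen with exponent $1/4$, the $\varkappa^{1/2}(\Sigma)(\sum_j\sqrt{n^{-2}\sum_i\E X_{ij}^4})^{1/2}$ piece of $\Delta_n^\circ$, via the identity $\E[\|\bar\Sigma\|_{H.S.}^2]=\sum_{j,k}n^{-2}\sum_i\E[X_{ij}^2X_{ik}^2]$ and the Cauchy–Schwarz inequality.

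For the covariance-discrepancy terms I use $\E[\bar\Sigma]=\Sigma$ (since $\E[X_iX_i^\top]=\Var(X_i)$ and $\Sigma=n^{-1}\sum_i\Var(X_i)$). Hence $\E\|\bar\Sigma-\Sigma\|_{H.S.}^2=\sum_{j,k}\Var(n^{-1}\sum_iX_{ij}X_{ik})\leq n^{-2}\sum_i\E|X_i|^4$, which yields (after Jensen with exponent $1/3$ and $1/2$ respectively) the contributions of $\delta_0$ and $\delta'_0$ to $\Delta_n^\circ$ through the terms $\varkappa^{2/3}(\Sigma)[\tr(\Sigma)]^{1/6}(\wh\delta+\|\Sigma\|_{op})^{1/6}(n^{-2}\sum_i\E|X_i|^4)^{1/6}$ and an analogous $\varkappa^{1/2}(\Sigma)$ piece, using the bound $\tr(\bar\Sigma)\leq\tr(\Sigma)+d\|\bar\Sigma-\Sigma\|_{op}$ together with $\|\bar\Sigma\|_{op}\leq\|\Sigma\|_{op}+\|\bar\Sigma-\Sigma\|_{op}$; these are the same manipulations already performed in Theorem \ref{efron}.

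The main obstacle is the same one encountered in Theorem \ref{efron}: the quantities $\delta_1,\delta_2,\delta_0$ involve $\|\Sigma_W\|_{op}$ raised to fractional powers coupled with random $X_i$-dependent sums, and bounding their expectations requires carefully splitting $\|\bar\Sigma\|_{op}^\alpha\leq 2^\alpha(\|\Sigma\|_{op}^\alpha+\|\bar\Sigma-\Sigma\|_{op}^\alpha)$ and then applying Cauchy–Schwarz or Jensen to decouple $\|\bar\Sigma-\Sigma\|_{op}$ from the moment sums. Since $e_i$ is independent of $X$, no additional coupling between $\bar\Sigma-\Sigma$ and the conditional moments of $\xi_i$ arises — a simplification relative to the empirical bootstrap where $\wh\Sigma=n^{-1}\sum_i(X_i-\bar X)(X_i-\bar X)^\top$ mixes the data and the resampling covariance. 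Once the bounds above are assembled, taking the expectation over $X$ gives precisely $C_e\Delta_n^\circ$, with $\wh\delta$ replaced by the $\bar\Sigma$-analog as specified.
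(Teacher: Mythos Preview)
Your approach is exactly the paper's: apply Theorem \ref{BALL2} conditionally on $X$ with $\Sigma_W=\bar\Sigma$, then take expectations via H\"older/Jensen and the triangle inequality $\|\bar\Sigma\|_{op}\leq\|\Sigma\|_{op}+\|\bar\Sigma-\Sigma\|_{op}$, precisely mirroring Section \ref{proof:efron} with $\wt X_i,\wh\Sigma$ replaced by $X_i,\bar\Sigma$ and all $\bar X$ terms dropped. Two bookkeeping points to fix: the $\varkappa^{1/2}(\Sigma)\bigl(\sum_j\sqrt{n^{-2}\sum_i\E X_{ij}^4}\bigr)^{1/2}$ piece of $\Delta_n^\circ$ comes from $\delta'_0(\bar\Sigma,\Sigma)$ (cf.\ \eqref{ef-delta0p}), not from the $\|\bar\Sigma\|_{H.S.}$ part of $\delta_1$, and your claimed ``identity'' for $\E[\|\bar\Sigma\|_{H.S.}^2]$ is false --- instead use $\E\|\bar\Sigma\|_{H.S.}\leq\|\Sigma\|_{H.S.}+\E\|\bar\Sigma-\Sigma\|_{H.S.}$ together with \eqref{hs-bound}; similarly, for $\tr(\bar\Sigma)$ use $\E\tr(\bar\Sigma)=\tr(\Sigma)$ directly (after H\"older) rather than the wasteful bound $\tr(\bar\Sigma)\leq\tr(\Sigma)+d\|\bar\Sigma-\Sigma\|_{op}$.
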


\begin{corollary}\label{coro-w}
Under the setting of Theorem \ref{wild},
if the law of $e_1$ does not depend on $n$, then we have
\ben{\label{eq-w}
\sup_{r\geq0}|\P(|W|\leq r)-\P(|W^\circ|\leq r\mid X)|=O_p(\Delta_n^\circ)
}
as $n\to\infty$. 
Moreover, let $\alpha\in(0,1)$ and define 
\[
q_n^\circ(\alpha):=\inf\{x\in\IR:\P(|W^\circ|> x\mid X)\leq\alpha\}.
\]
Then we have $\P(|W|> q_n^\circ(\alpha))\to\alpha$ as $n\to\infty$, provided that $\Delta^\circ_n\to0$. 
\end{corollary}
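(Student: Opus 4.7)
The plan is to mirror the proof of Corollary \ref{coro-ef}, using Theorem \ref{wild} in place of Theorem \ref{efron}. First I would prove \eqref{eq-w} by a triangle inequality through $\P(|Z|\leq r)$. Under the hypotheses of Theorem \ref{efron} we have $\Sigma=\Sigma_W$, hence $\delta_0(\Sigma_W,\Sigma)=\delta'_0(\Sigma_W,\Sigma)=0$, and Theorem \ref{BALL2} yields the deterministic bound
\[
\sup_{r\geq 0}|\P(|W|\leq r)-\P(|Z|\leq r)|\leq C\bigl(\varkappa^{3/4}(\Sigma)\delta_1^{1/4}(W)+\varkappa^{2/3}(\Sigma)\delta_2^{1/3}(W)\bigr)\leq C\Delta_n^\circ,
\]
since the first two terms of $\Delta_n^\circ$ already dominate the right-hand side. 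Theorem \ref{wild} combined with Markov's inequality then gives $\sup_r|\P(|W^\circ|\leq r\mid X)-\P(|Z|\leq r)|=O_p(\Delta_n^\circ)$; the hypothesis that the law of $e_1$ does not depend on $n$ is used exactly here to make the constant $C_e$ in Theorem \ref{wild} a fixed constant. Adding the two bounds gives \eqref{eq-w}.

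Now assume $\Delta_n^\circ\to 0$. Because $\Lambda_2(\Sigma)>0$, the two largest eigenvalues of $\Sigma$ are strictly positive, so $|Z|^2$ is a non-degenerate positive combination of at least two independent chi-square variables and $|Z|$ admits a strictly increasing continuous distribution function $F_Z$ on its support. Let $r_{1-\alpha}$ be the $(1-\alpha)$-quantile of $|Z|$. From \eqref{eq-w} and the Theorem \ref{BALL2} bound above, $\sup_r|\P(|W^\circ|\leq r\mid X)-F_Z(r)|\to 0$ in probability, and the standard quantile-convergence argument (using strict monotonicity of $F_Z$ at $r_{1-\alpha}$) gives $q_n^\circ(\alpha)\to r_{1-\alpha}$ in probability. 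For any $\epsilon>0$, monotonicity of the event $\{|W|>r\}$ in $r$ yields the sandwich
\[
\P(|W|>r_{1-\alpha}+\epsilon)-\P(q_n^\circ(\alpha)>r_{1-\alpha}+\epsilon)\leq\P(|W|>q_n^\circ(\alpha))\leq\P(|W|>r_{1-\alpha}-\epsilon)+\P(q_n^\circ(\alpha)<r_{1-\alpha}-\epsilon),
\]
whose ``gap'' probabilities vanish by the quantile convergence, while the deterministic-threshold probabilities satisfy $\P(|W|>r_{1-\alpha}\pm\epsilon)\to 1-F_Z(r_{1-\alpha}\pm\epsilon)$ by Theorem \ref{BALL2}. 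Sending $\epsilon\to 0$ and invoking continuity of $F_Z$ at $r_{1-\alpha}$ gives $\P(|W|>q_n^\circ(\alpha))\to\alpha$.

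The main obstacle is the sandwich step: the first conclusion \eqref{eq-w} is almost bookkeeping via triangle inequality and Markov, but the quantile statement is delicate because $q_n^\circ(\alpha)$ and $|W|$ are both functions of the same sample $X$, so $\P(|W|>q_n^\circ(\alpha))$ cannot be evaluated by direct conditioning on $X$. The sandwich circumvents this coupling by passing through deterministic thresholds $r_{1-\alpha}\pm\epsilon$, which is why both uniform convergence of the CDF of $|W|$ to $F_Z$ (delivered by Theorem \ref{BALL2}) and continuity plus strict monotonicity of $F_Z$ at $r_{1-\alpha}$ (ensured precisely by $\Lambda_2(\Sigma)>0$) are needed simultaneously.
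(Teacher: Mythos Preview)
Your proof of \eqref{eq-w} is correct and is exactly the paper's approach: triangle inequality through $\IP(|Z|\leq r)$, with Theorem~\ref{BALL2} supplying the deterministic half (since $\Sigma=\Sigma_W$ kills $\delta_0,\delta_0'$) and Theorem~\ref{wild} plus Markov supplying the bootstrap half.

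For the quantile statement, the sandwich strategy is sound, but the execution has a real gap. In this paper $\Sigma=\Var(W)$ depends on $n$ (already because the dimension $d$ varies with $n$), so $F_Z$ and $r_{1-\alpha}$ are $n$-dependent. Both the step ``$q_n^\circ(\alpha)\to r_{1-\alpha}$ in probability'' and the final step ``sending $\epsilon\to0$ and invoking continuity of $F_Z$'' implicitly require that $F_Z^{(n)}(r_{1-\alpha}^{(n)}\pm\epsilon)-F_Z^{(n)}(r_{1-\alpha}^{(n)})$ be bounded away from zero uniformly in $n$ for each fixed $\epsilon>0$; nothing in the hypotheses guarantees this. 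The easy fix is to parametrize the sandwich in probability rather than in the argument of $F_Z$: set $\eta_n:=\sup_r|\IP(|W^\circ|\leq r\mid X)-F_Z(r)|$ and $\gamma_n:=\sup_r|\IP(|W|\leq r)-F_Z(r)|$, observe (using only continuity of $F_Z$, which you correctly derived from $\Lambda_2(\Sigma)>0$) that $|F_Z(q_n^\circ(\alpha))-(1-\alpha)|\leq\eta_n$, and on $\{\eta_n<\epsilon\}$ trap $q_n^\circ(\alpha)$ between the $(1-\alpha-\epsilon)$- and $(1-\alpha+\epsilon)$-quantiles $r_-\leq r_+$ of $F_Z$. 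The sandwich then yields directly
\[
|\IP(|W|>q_n^\circ(\alpha))-\alpha|\leq\epsilon+\gamma_n+\IP(\eta_n\geq\epsilon),
\]
whose right-hand side tends to $\epsilon$ for every $\epsilon>0$ regardless of how $F_Z$ varies with $n$. The paper does not spell this out; it simply cites Proposition~3.2 of \cite{Ko19}, which packages precisely this argument.
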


\begin{remark}
Again, it is possible to derive a wild bootstrap version of Theorem \ref{BALL} when $\Sigma$ is invertible. 
Also, $\Delta^\circ_n$ can be bounded as in \eqref{star-bound} with $\wh\delta$ replaced by $\E\|\bar\Sigma-\Sigma\|_{op}$. Since both \eqref{naive-bound} and \eqref{subgauss-bound} continue to hold true while replacing $\wh\delta$ by $\E\|\bar\Sigma-\Sigma\|_{op}$, similar comments in Remark \ref{rem:zhilova} are applicable to $\Delta_n^\circ$. 
\end{remark}

\begin{remark}[Relation to \cite{Zh20} cont.]
Theorem 4.3 of \cite{Zh20} establishes a non-asymptotic bound for  
$\sup_{A\in \mathcal{B}}|\P(W\in A)-\P(W^\circ\in A|X)|$.
Her bound vanishes for $d=o(n^{1/2})$ assuming $\E[e_1^3]=1$ and certain distributional properties of $X_i$. 
As mentioned in Remark \ref{rem:zhilova}, it is possible to derive an error bound based on our Theorem \ref{BALL} allowing the same growth rate of $d$ but without these two assumptions. 
Moreover, the dimensional dependence can be further improved when considering centered balls. 

\end{remark}

\section{Proofs}\label{sec5}

We first introduce some notation used throughout the proofs. 
For two vectors $x,y\in\mathbb{R}^d$, $x\cdot y$ denotes their inner product. 
For two $d\times d$ matrices $M$ and $N$, we write $\langle M,N\rangle_{H.S.}$ for their Hilbert-Schmidt inner product. 

For real-valued functions on $\IR^d$ we will write $\partial_i f(x)$ for $\partial f(x)/\partial x_i$, $\partial_{ij} f(x)$ for $\partial^2 f(x)/(\partial x_i\partial x_j)$ and so forth. 
We write $\nabla f$ and $\Hess f$ for the gradient and Hessian matrix of $f$, respectively. 
In addition, following \cite{Ra19a,Ra19b}, we denote by $\nabla^rf(x)$ the $r$-th derivative of $f$ at $x$ regarded as an $r$-linear form: The value of $\nabla^rf(x)$ evaluated at $u_1,\dots,u_r\in\mathbb{R}^d$ is given by
\[
\langle \nabla^r f(x),u_1\otimes\cdots\otimes u_r\rangle=\sum_{j_1,\dots,j_r=1}^d\partial_{j_1,\dots,j_r}f(x)u_{1,j_1}\cdots u_{r,j_r}.
\]
When $u_1=\cdots=u_r=:u$, we write $u_1\otimes\cdots\otimes u_r=u^{\otimes r}$ for short. 

For any $r$-linear form $T$, its injective norm is defined by
\[
|T|_\vee:=\sup_{|u_1|\vee\cdots\vee|u_r|\leq1}|\langle T,u_1\otimes\cdots\otimes u_r\rangle|.
\] 
For an $(r-1)$-times differentiable function $h:\mathbb{R}^d\to\mathbb{R}$, we write
\[
M_r(h):=\sup_{x\neq y}\frac{|\nabla^{r-1}h(x)-\nabla^{r-1}h(y)|_\vee}{|x-y|}.
\]
Note that $M_r(h)=\sup_{x\in\mathbb{R}^d}|\nabla^{r}h(x)|_\vee$ if $h$ is $r$-times differentiable. We refer to the beginning of \cite[Section 2]{Ra19a} and \cite[Section 5]{Ra19b} for more details about these notation. 

Finally, we refer to the following bound for derivatives of the $d$-dimensional standard normal density $\phi$, which will be used several times in the following (cf.~the inequality after Eq.(4.9) of \cite{Ra19b}):
\ben{\label{raic4.9}
\int_{\IR^d} |\langle \nabla^s \phi(z), u^{\otimes s}\rangle | dz \leq C_s |u|^s\quad\text{for any fixed integer $s$},
} 
where $C_s$ is a constant depending only on $s$.

\subsection{Basic decomposition}

The proofs for Theorems \ref{convex} and \ref{BALL}--\ref{BALL2} start with approximating the indicator function $1_A$ for $A\in\mcl{A}$ or $A\in\mcl{B}$ by an appropriate smooth function $h$. Then, the problem amounts to establishing an appropriate bound for $\E h(W)-\E h(Z)$. 
To accomplish this, in the proofs of Theorems \ref{convex} and \ref{BALL}, we will make use of a decomposition of $\E h(W)-\E h(Z)$ derived from the exchangeable pair approach in Stein's method for multivariate normal approximation by \cite{ChMe08} and \cite{ReRo09} along with a symmetry argument by \cite{FaKo20a,FaKo20b} (cf.~\eq{n08}--\eq{n09} below).

Given a twice differentiable function $h:\IR^d\to\IR$ with bounded partial derivatives, we consider the Stein equation
\ben{   \label{n03}
  \langle\Hess f(w),\Sigma\rangle_{H.S.}-w\cdot \nabla f(w)=h(w)-\E h(Z),\qquad w\in \IR^d.
}
It can be verified directly
that
\ben{\label{sol}
f(w)=\int_0^1-\frac{1}{2(1-s)} \int_{\IR^d}
    \bkle{h(\sqrt{1-s}w+\sqrt{s}\Sigma^{1/2}z)-\E h (Z)}\phi(z)dzds
}
is a solution to \eq{n03} (cf.~\cite{Go91} and \cite{Me09}). 
In the following we assume that $f$ is thrice differentiable with bounded partial derivatives. This is true if $\Sigma$ is invertible or $h$ is thrice differentiable with bounded partial derivatives. 

Let $\{\xi_1',\dots, \xi_n'\}$ be an independent copy of $\{\xi_1,\dots, \xi_n\}$, and let $I$ be a random index uniformly chosen from $\{1,\dots, n\}$ and independent of $\{\xi_1,\dots, \xi_n, \xi'_1,\dots, \xi'_n\}$.
Define $W'=W-\xi_I+\xi_I'$. It is easy to verify that $(W, W')$ has the same distribution as $(W', W)$ (exchangeability) and 
\ben{\label{n02}
\E(W'-W\mid W)=-\frac{W}{n}.
}
From exchangeability and \eq{n02}, we have, with $D=W'-W$,
\begin{align}
0&=\frac{n}{2}\E[ D\cdot(\nabla f (W')+\nabla f(W))]\nonumber\\
&=\E\left[\frac{n}{2}D\cdot(\nabla f(W')-\nabla f(W))+nD\cdot\nabla f(W)\right]\nonumber\\
&=\E\left[\frac{n}{2}\sum_{j,k=1}^d D_jD_k\partial_{jk}f (W)+R_2+ n D\cdot\nabla f(W)\right]\label{n04}\\
&=\E\left[\langle\text{Hess}f(W),\Sigma\rangle_{H.S.}-R_1+R_2- W\cdot \nabla f(W)\right],\nonumber
\end{align}
where
\ban{
R_1&=\sum_{j,k=1}^d \E\bklg{(\Sigma_{jk}- \frac{n}{2}D_{j}D_{k})\partial_{jk} f(W)},\\
R_2&=\frac{n}{2}\sum_{j,k,l=1}^d \E \bklg{D_jD_kD_lU\partial_{jkl}f(W+(1-U)D) }\label{n09-0}
}
and $U$ is a uniform random variable on $[0,1]$ independent of everything else. 
From \eq{n03} and \eq{n04}, we have
\ben{\label{n18}
\E h(W)-\E h(Z)=R_1-R_2.
}
We further rewrite $R_1$ and $R_2$ respectively as follows. 
First, set
\ba{
V=(V_{jk})_{1\leq j,k\leq d}&:=\left(\E\left[\Sigma_{jk}- \frac{n}{2}D_{j}D_{k}\mid\xi\right]\right)_{1\leq j,k\leq d}.
}
Then we evidently have
\ben{\label{n25}
R_1=\sum_{j,k=1}^d\E [V_{jk}\partial_{jk}f(W)]=\E\langle V,\Hess f(W)\rangle_{H.S.}. 
}
Also, one can easily verify that (cf.~Eq.(22) of \cite{CCK14})
\ben{\label{v-eq}
V=\Sigma-\frac{1}{2}\sum_{i=1}^n \E[\xi_i \xi_i^\top]-\frac{1}{2} \sum_{i=1}^n \xi_i \xi_i^\top=(\Sigma-\Var(W))-\frac{1}{2}\sum_{i=1}^n(\xi_i\xi_i^\top-\E[\xi_i\xi_i^\top]).
}
Next, by exchangeability we have
\besn{\label{n08}
&\E[D_jD_kD_lU\partial_{jkl}f(W+(1-U)D)]\\
&=-\E[D_jD_kD_lU\partial_{jkl}f(W'-(1-U)D)]\\
&=-\E[D_jD_kD_lU\partial_{jkl}f(W+UD)].
}
Hence we obtain
\begin{align}
R_2&=\frac{n}{4}\sum_{j,k,l=1}^d\E[D_jD_kD_lU\{\partial_{jkl}f(W+(1-U)D)-\partial_{jkl}f(W+UD)\}].
\label{n09}
\end{align}

\subsection{Proof of Theorem \ref{convex}}

Since $\Sigma^{-1/2}W=\sum_{i=1}^n\Sigma^{-1/2}\xi_i$ and $\{\Sigma^{-1/2}x:x\in A\}\in\mcl{A}$ for all $A\in\mcl{A}$, it suffices to consider the case $\Sigma=I_d$. 
The proof is a combination of \cite{Be03}'s smoothing, the decomposition \eqref{n18}, and a recursive argument by \cite{Ra19a}. 

Fix $\beta_0>0$. Define
\ben{\label{n11}
K(\beta_0)=\sup_{W} \frac{\sup_{A\in \mathcal{A}}|\P(W\in A)-\P(Z\in A)|}{\max \left\{\beta_0, \Psi(\delta_\mathcal{A}(W, I_d)) \right\}},
}
where the first supremum is taken over the family of all sums $W=\sum_{i=1}^n \xi_i$ of $n$ independent mean-zero random vectors with $\E |\xi_i|^4<\infty$. 
We will obtain a recursive inequality for $K(\beta_0)$ and prove that 
\ben{\label{n22}
K(\beta_0)\leq  Cd^{1/4} 
}
for an absolute constant $C$ that does not depend on $\beta_0$. Eq.\eq{n01} then follows by sending $\beta_0\to 0$.

Now we fix a $W=\sum_{i=1}^n \xi_i$, $n\geq 1$, in the aforementioned family (will take sup in \eq{n27}). Let
\ben{\label{n12}
\bar \beta=\max \left\{\beta_0, \Psi(\delta_\mathcal{A}(W, I_d)) \right\}.
}
Next, recall that
$\mathcal{A}$ is the collection of all convex sets in $\IR^d$. For
$A\in\mathcal{A}$, $\eps>0$, define 
\be{
  A^{\eps} = \{x\in \mathbb{R}^d\,:\, dist(x,A)\leq \eps\},
}
where $dist(x, A)=\inf_{y\in A}|x-y|$.


\begin{lemma}[Lemma 2.3 of \cite{Be03}]\label{nl1}
For any $A\in \mathcal{A}$ and $\eps>0$, there exists a function $h_{A,\eps}$ (which depends only on $A$ and $\eps$) such that
\be{
h_{A,\eps}(x)=1\ \text{for}\ x\in A,\qquad h_{A,\eps}(x)=0\ \text{for}\ x\in \IR^d\backslash A^\eps,\qquad 0\leq h_{A,\eps}(x)\leq 1,
}
and
\ben{\label{n06}
M_1(h_{A,\eps})\leq \frac{C}{\eps}, \qquad M_2(h_{A,\eps})\leq \frac{C}{\eps^2},
}
where $C$ is an absolute constant that does not depend on $A$ and $\eps$.
\end{lemma}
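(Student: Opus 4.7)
The plan is to build $h_{A,\eps}$ by composing a fixed smooth cutoff with the \emph{squared} distance to $A$. The key observation is that while $dist(\cdot,A)$ is only $C^{0,1}$, the squared distance $g(x):=dist(x,A)^2$ to a closed convex set is $C^{1,1}$ with dimension-free constants, which is exactly what is needed in order for a cutoff to produce a Lipschitz gradient.

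First I would recall the standard fact from convex analysis that, for closed convex $A\subset\IR^d$, the nearest-point projection $\pi_A:\IR^d\to A$ is well defined and firmly nonexpansive; in particular both $\pi_A$ and $I-\pi_A$ are $1$-Lipschitz. From firm nonexpansiveness one obtains $\nabla g(x)=2(x-\pi_A(x))$, so that $|\nabla g(x)|=2\,dist(x,A)$ and $\nabla g$ is globally $2$-Lipschitz. Moreover $g$ is convex (since $dist(\cdot,A)$ is convex and $t\mapsto t^2$ is convex and nondecreasing on $[0,\infty)$), so every sublevel set $\{g\leq t\}$ is convex.

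Now I would fix once and for all a smooth, non-increasing $\psi:\IR\to[0,1]$ with $\psi\equiv1$ on $(-\infty,0]$ and $\psi\equiv0$ on $[1,\infty)$, and put
\[
h_{A,\eps}(x):=\psi\!\left(g(x)/\eps^2\right).
\]
The three pointwise conditions are immediate: $g$ vanishes on $A$, $g>\eps^2$ outside $A^\eps$, and $\psi$ takes values in $[0,1]$. For $M_1$, the chain rule gives $\nabla h_{A,\eps}(x)=\psi'(g(x)/\eps^2)\,\nabla g(x)/\eps^2$, which is supported on $\{dist(\cdot,A)\leq\eps\}$; there $|\nabla g|\leq 2\eps$, yielding $M_1(h_{A,\eps})\leq 2\|\psi'\|_\infty/\eps$.

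The main obstacle is $M_2$, i.e.\ showing that $\nabla h_{A,\eps}$ is $O(1/\eps^2)$-Lipschitz. On the convex support $\{g\leq\eps^2\}$ I would apply the product-rule estimate for Lipschitz functions: $\psi'(g/\eps^2)$ is bounded by $\|\psi'\|_\infty$ and, using $|\nabla g|\leq 2\eps$ there, $(2\|\psi''\|_\infty/\eps)$-Lipschitz, while $\nabla g/\eps^2$ is bounded by $2/\eps$ and $(2/\eps^2)$-Lipschitz from the $C^{1,1}$ regularity of $g$; together this gives an $O(1/\eps^2)$ Lipschitz bound for $\nabla h_{A,\eps}$ on the support. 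To handle two points $x,y$ whose connecting segment crosses the boundary of the support, I would insert a crossing point $z$ with $g(z)=\eps^2$; since $\psi$ is smooth and identically $0$ on $[1,\infty)$ we have $\psi'(1)=0$, hence $\nabla h_{A,\eps}(z)=0$ and the on-support estimate propagates to a global Lipschitz bound of the same order. All constants depend only on the fixed $\psi$, so in particular not on $A$ or $d$.
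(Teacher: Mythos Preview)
Your construction is correct, and in fact it is essentially the one Bentkus uses. The paper itself does not give a proof of this lemma at all: it simply quotes the statement from \cite{Be03} (Lemma~2.3 there) and moves on. So there is nothing to compare against in the present paper.

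For the record, the key steps you outline are all sound. The squared distance $g=dist(\cdot,A)^2$ to a closed convex set is $C^{1,1}$ with $\nabla g(x)=2(x-\pi_A(x))$ and $\nabla g$ globally $2$-Lipschitz, since firm nonexpansiveness of $\pi_A$ gives $|(I-\pi_A)x-(I-\pi_A)y|\leq|x-y|$; this is the decisive dimension-free ingredient. The on-support product-rule estimate for $\nabla h_{A,\eps}=\psi'(g/\eps^2)\nabla g/\eps^2$ is correct, and your crossing argument to pass from the convex support $A^\eps=\{g\leq\eps^2\}$ to a global Lipschitz bound is clean: convexity of $A^\eps$ ensures the segment between two interior points never leaves the support, and at a boundary crossing point $z$ one has $\psi'(1)=0$ (since $\psi$ is smooth and vanishes on $[1,\infty)$), so $\nabla h_{A,\eps}(z)=0$. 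All constants depend only on the fixed profile $\psi$, not on $A$, $\eps$, or $d$.
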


\begin{lemma}[Theorem 4 of \cite{Ba93}]\label{nl2}
Let $\phi$ be the standard Gaussian density on $\IR^d, d\geq 2$, and let $A$ be a convex set in $\IR^d$. Then
\ben{\label{n28}
\int_{\partial A} \phi \leq 4 d^{1/4}.
}
\end{lemma}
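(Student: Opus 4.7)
The plan is to prove this sharp bound on the Gaussian perimeter of a convex set, following Ball's classical strategy. First, by Minkowski-summing $A$ with a small Euclidean ball and passing to a limit, I would reduce to the case where $A$ is a bounded convex body with $C^2$ boundary; the Gaussian surface integral $\int_{\partial A}\phi$ is continuous under such approximations. For such $A$, the Gauss map $\nu:\partial A\to S^{d-1}$ is a diffeomorphism, and its inverse is the map $u\mapsto\nabla h_A(u)$, where $h_A(u)=\sup_{x\in A}u\cdot x$ is the support function of $A$.

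Second, I would pull the surface integral back to the sphere using the classical convex-geometric formula
\be{
\int_{\partial A}\phi=\int_{S^{d-1}}\phi(\nabla h_A(u))\,\det\bigl(D^2_{\mathrm{sph}} h_A(u)+h_A(u)I_{d-1}\bigr)\,du,
}
where $D^2_{\mathrm{sph}}$ denotes the spherical Hessian of $h_A$ acting on $T_uS^{d-1}$. Euler's identity $u\cdot\nabla h_A(u)=h_A(u)$ combined with Cauchy--Schwarz gives $|\nabla h_A(u)|\geq|h_A(u)|$, so $\phi(\nabla h_A(u))\leq(2\pi)^{-d/2}e^{-h_A(u)^2/2}$ for every $u\in S^{d-1}$, regardless of the sign of $h_A(u)$.

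Third, I would apply AM--GM to the $(d-1)\times(d-1)$ positive-semidefinite matrix $D^2_{\mathrm{sph}} h_A+h_A I_{d-1}$ to obtain $\det(\cdot)\leq\bigl((d-1)^{-1}\tr(\cdot)\bigr)^{d-1}$, then integrate by parts on $S^{d-1}$ to convert spherical-Laplacian contributions into $|\nabla_{\mathrm{sph}} h_A|^2$ terms, and finally bound the resulting integral against the Gaussian weight $e^{-h_A^2/2}$ using H\"older-type estimates.

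The main obstacle is this last step: extracting the sharp $d^{1/4}$ rather than a weaker $d^{1/2}$ or $d^1$ bound. The $d^{1/4}$ scaling reflects a delicate balance between the Gaussian weight -- which effectively concentrates $h_A(u)$ near the radius $\sqrt{d}$ on the typical part of $S^{d-1}$ -- and the polynomial-in-$h_A$ factors produced by the AM--GM bound on the determinant. It ultimately corresponds to optimizing a Laplace-type integral over an $O(d^{-1/4})$ window in the radial variable; this optimization is the technical heart of Ball's original argument, and reproducing it in detail is where I expect the real work to lie.
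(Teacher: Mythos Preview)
The paper does not prove this lemma; it is quoted as Theorem~4 of Ball (1993) and used as a black box, so there is no in-paper argument to compare against.

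Your steps 1--2 are correct and standard: smoothing $A$, parametrizing $\partial A$ by the inverse Gauss map, and the pointwise bound $\phi(\nabla h_A(u))\leq(2\pi)^{-d/2}e^{-h_A(u)^2/2}$ all hold. The gap is step 3, and you flag it yourself. The specific mechanism you propose---AM--GM on the curvature determinant followed by integration by parts and H\"older---is not a viable route to $d^{1/4}$: replacing $\det\bigl(D^2_{\mathrm{sph}}h_A+h_AI_{d-1}\bigr)$ by $\bigl((d-1)^{-1}\tr(\cdot)\bigr)^{d-1}$ can inflate the integrand by a factor exponential in $d$ whenever the principal radii of curvature are unequal, and since the trace sits inside a $(d-1)$-st power there is no clean integration-by-parts identity on $S^{d-1}$ that recovers the loss. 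The known proofs do not go through AM--GM on the Jacobian. Nazarov's later simplification, for instance, bypasses the curvature determinant entirely: one flows $\partial A$ along its outward normal into $\IR^d\setminus A$, uses convexity to lower-bound the Jacobian of that flow by $1$, and obtains $\int_{\partial A}\phi(x)\int_0^\infty e^{-t(x\cdot\nu(x))-t^2/2}\,dt\,d\sigma(x)\leq1$; combining this with a matching inward estimate and optimizing produces the $d^{1/4}$. As it stands, your proposal is a correct setup followed by an acknowledged black box rather than a proof.
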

From the Gaussian anti-concentration inequality \eq{n28}, it is not difficult to obtain the following smoothing lemma.

\begin{lemma}[Lemma 4.2 of \cite{FaRo15}]\label{lem4}
For any $d$-dimensional random vector $W$ and any $\eps>0$,
\ben{\label{n17}
  \sup_{A\in \mathcal{A}} |P(W\in A)-P(Z\in A)| \leq 4d^{1/4}\eps +
    \sup_{A\in\mathcal{A}}\babs{\E h_{A,\eps}(W)-\E h_{A,\eps}(Z)},
}
where $h_{A,\eps}$ is as in Lemma \ref{nl1}.
\end{lemma}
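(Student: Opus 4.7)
The plan is a two-sided sandwich smoothing argument: upper-bound $\II_A$ by the Lipschitz function $h_{A,\eps}$ of Lemma \ref{nl1}, lower-bound $\II_A$ by the corresponding function for the inner $\eps$-parallel body of $A$, and then convert the Ball surface-area inequality \eqref{n28} into a bound on the Gaussian mass of thin convex shells.

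For the upper direction, I fix $A\in\mathcal{A}$. Lemma \ref{nl1} gives $\II_A\leq h_{A,\eps}\leq\II_{A^\eps}$, hence
\[
\P(W\in A)-\P(Z\in A)\leq\bklg{\E h_{A,\eps}(W)-\E h_{A,\eps}(Z)}+\P(Z\in A^\eps\setminus A).
\]
The first bracket is absorbed into the supremum appearing in the statement. For the second term I will use the co-area formula: for every $t\in[0,\eps]$ the parallel body $A^t$ is again convex, so applying Lemma \ref{nl2} to each $A^t$ and integrating over $t$ yields $\P(Z\in A^\eps\setminus A)\leq 4d^{1/4}\eps$.

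For the lower direction, I introduce the inner parallel body $A^{-\eps}:=\{x\in\IR^d:B(x,\eps)\subseteq A\}$, which is again convex. When $A^{-\eps}\neq\emptyset$, Lemma \ref{nl1} applied to $A^{-\eps}$ gives $h_{A^{-\eps},\eps}\leq\II_{(A^{-\eps})^\eps}\leq\II_A$, so a symmetric chain produces
\[
\P(Z\in A)-\P(W\in A)\leq\bklg{\E h_{A^{-\eps},\eps}(Z)-\E h_{A^{-\eps},\eps}(W)}+\P(Z\in A\setminus A^{-\eps}),
\]
and the same co-area argument (each inner parallel body being convex) bounds $\P(Z\in A\setminus A^{-\eps})$ by $4d^{1/4}\eps$. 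In the degenerate case $A^{-\eps}=\emptyset$, every point of $A$ lies within distance $\eps$ of $\partial A$, so the shell estimate already gives $\P(Z\in A)\leq 4d^{1/4}\eps$ and the lower bound is immediate. Combining the two directions and taking suprema over $A\in\mathcal{A}$ produces \eqref{n17}.

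The main obstacle is the passage from Ball's boundary inequality to a shell inequality: Lemma \ref{nl2} controls the Gaussian surface measure of $\partial A$ itself, whereas what is needed is the Gaussian mass of the $\eps$-neighborhood $A^\eps\setminus A^{-\eps}$. The resolution is the co-area formula together with the fact that the parallel bodies $A^t$ and $A^{-t}$ of a convex set remain convex for every admissible $t\geq 0$, so Lemma \ref{nl2} applies uniformly along the foliation by the boundaries $\partial A^t$, producing exactly the factor $4d^{1/4}\eps$ claimed in \eqref{n17}.
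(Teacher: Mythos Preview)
Your argument is correct and is precisely the standard smoothing argument the paper alludes to; note that the paper does not actually prove Lemma~\ref{lem4} but merely cites \cite{FaRo15} and remarks that it follows ``not difficult[ly]'' from Ball's inequality \eqref{n28}. Your two-sided sandwich with $h_{A,\eps}$ and $h_{A^{-\eps},\eps}$, together with the co-area step turning \eqref{n28} into the shell bound $\P(Z\in A^\eps\setminus A)\leq 4d^{1/4}\eps$, is exactly how that reference carries out the proof.
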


The following lemma can be shown by elementary calculation, so we omit its proof.
\begin{lemma}\label{lem:psi}
$\Psi$ is an increasing function on $(0,\infty)$. Moreover, $\Psi(cx)\leq(c+\Psi(c))\Psi(x)$ for all $x>0$ and $c\geq1$.
\end{lemma}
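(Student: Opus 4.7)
The plan has two independent parts, corresponding to the two assertions.

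For monotonicity, I would write $\Psi$ piecewise by splitting $(0,\infty)$ at the transition points of $|\log x|\vee 1$. On $(0,1/e]$, $[1/e,e]$, and $[e,\infty)$ respectively, $\Psi(x)$ equals $-x\log x$, $x$, and $x\log x$. Direct differentiation yields derivatives $-\log x-1$, $1$, and $\log x+1$, all nonnegative on their respective domains, and the two adjacent formulas agree at the joins $x=1/e$ and $x=e$ (both giving $1/e$ and $e$, respectively). Continuity plus nonnegative one-sided derivatives then yield monotonicity on all of $(0,\infty)$.

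For the multiplicative inequality $\Psi(cx)\leq(c+\Psi(c))\Psi(x)$, I would peel off the common factor $cx$. Using $\Psi(cx)=cx(|\log(cx)|\vee 1)$, $\Psi(x)=x(|\log x|\vee 1)$, and $c+\Psi(c)=c(1+(|\log c|\vee 1))$, the claim reduces to showing
\[
|\log(cx)|\vee 1\leq (1+(|\log c|\vee 1))(|\log x|\vee 1).
\]
The triangle inequality for logarithms gives $|\log(cx)|\leq|\log c|+|\log x|\leq a+b$, where $a:=|\log c|\vee 1$ and $b:=|\log x|\vee 1$. Since $a,b\geq 1$, the elementary inequality $a+b\leq (1+a)b$ (equivalent to $a\leq ab$, which holds since $b\geq 1$) bounds $|\log(cx)|$ by $(1+a)b$, and the same product dominates $1$ since $(1+a)b\geq 2$.

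There is no substantive obstacle here; the whole argument is bookkeeping for the ``$\vee 1$'' together with the triangle inequality $\log(cx)=\log c+\log x$. The hypothesis $c\geq 1$ is actually not needed for the displayed reduction, but it is the only regime used later in the paper.
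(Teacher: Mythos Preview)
Your proof is correct. The paper itself omits the proof entirely, stating only that it ``can be shown by elementary calculation,'' and your argument is precisely such a calculation: the piecewise check of monotonicity and the reduction of the multiplicative inequality to $a+b\leq(1+a)b$ for $a,b\geq1$ are both clean and complete. Your closing remark that the hypothesis $c\geq1$ is not actually used is also accurate.
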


We now fix $A\in \mathcal{A}$ (will take sup in \eq{n26}), $0<\eps\leq 1$, write $h:=h_{A, \eps}$ and proceed to bound $|\E h(W)-\E h(Z)|$ by the decomposition \eq{n18}. 
Consider the solution $f$ to the Stein equation \eqref{n03} with $\Sigma=I_d$, which is given by \eqref{sol}. 
Since $h$ has bounded partial derivatives up to the second order and $\Sigma=I_d$ is invertible, $f$ is thrice differentiable with bounded partial derivatives. Using the integration by parts formula, we have for $1\leq j, k, l\leq d$ and any constant $0\leq c_0\leq 1$ that
\besn{   \label{n05}
  \partial_{jk} f (w) & =
  \int_0^{c_0}  \frac{1}{2\sqrt{s}} \int_{\IR^d}
     \partial_j h(\sqrt{1-s}w+\sqrt{s}z) \partial_k \phi (z) dz ds\\
   &\quad +\int_{c_0}^1- \frac{1}{2s}
        \int_{\IR^d} h (\sqrt{1-s}w+\sqrt{s}z) \partial_{jk}\phi (z) dz ds
}
and
\besn{\label{n10}
  \partial_{jkl} f (w) & =
  \int_0^{c_0} \frac{\sqrt{1-s}}{2\sqrt{s}} \int_{\IR^d}
     \partial_{jk} h (\sqrt{1-s}w+\sqrt{s}z) \partial_l \phi (z) dzds\\
     &\quad +\int_{c_0}^1 -\frac{\sqrt{1-s}}{2s}
        \int_{\IR^d} \partial_jh (\sqrt{1-s}w+\sqrt{s}z) \partial_{kl} \phi (z) dz ds.
}

We first bound $R_1$ in \eqref{n25}. We will utilize the following lemma. 
\begin{lemma}[Lemma 4.3 of \cite{FaRo15}]\label{lem5}
For $k\geq 1$ and each map $a: \{1,\dots,d\}^k\rightarrow \IR$, we have
\ben{\label{n07}
\int_{\mr}\left( \sum_{i_1,\dots, i_k=1}^d a(i_1,\dots, i_k)\frac{\partial_{i_1\dots i_k} \phi(z)}{\phi(z)}  \right)^2\phi(z)dz 
\leq k! \sum_{i_1,\dots, i_k=1}^d \left( a(i_1,\dots, i_k)  \right)^2.
}
\end{lemma}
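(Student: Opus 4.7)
The plan is to identify the functions $(-1)^k \partial_{i_1 \cdots i_k} \phi(z)/\phi(z)$ with the tensor Hermite polynomials $H_{i_1 \cdots i_k}(z)$ of the standard Gaussian measure on $\IR^d$, and then exploit their orthogonality. The key input is the identity
\[
\int_{\IR^d} H_{i_1 \cdots i_k}(z)\, H_{j_1 \cdots j_k}(z)\, \phi(z)\, dz = \sum_{\sigma \in S_k} \prod_{m=1}^{k} \delta_{i_m,\, j_{\sigma(m)}},
\]
i.e.\ the permanent of the $k\times k$ Kronecker-delta matrix. To verify this, I would use $H_J(z)\phi(z) = (-1)^k \partial_{j_1 \cdots j_k}\phi(z)$, integrate by parts $k$ times (boundary contributions vanish by the super-polynomial decay of $\phi$ and its derivatives) to reduce the integral to $\int \partial_{j_1 \cdots j_k} H_{i_1 \cdots i_k}(z)\, \phi(z)\, dz$, and observe that $H_{i_1 \cdots i_k}$ is a polynomial of degree $k$ whose leading monomial is $z_{i_1}\cdots z_{i_k}$. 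Hence $\partial_{j_1\cdots j_k} H_{i_1\cdots i_k}$ is a constant equal to the number of bijections pairing each $\partial_{j_l}$ with some $z_{i_m}$ satisfying $i_m = j_l$, which is exactly the claimed permanent.

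Granting this orthogonality, expanding the square in the target integral gives
\[
\int \Bigl(\sum_i a(i)\, H_{i_1 \cdots i_k}(z)\Bigr)^{2} \phi(z)\, dz = \sum_{\sigma \in S_k} \sum_{i_1, \dots, i_k} a(i_1, \dots, i_k)\, a(i_{\sigma(1)}, \dots, i_{\sigma(k)}),
\]
the overall sign $(-1)^{2k}$ dropping out upon squaring and the Kronecker deltas being collapsed (after relabelling $\sigma \mapsto \sigma^{-1}$). For each fixed $\sigma \in S_k$, Cauchy--Schwarz yields
\[
\sum_i a(i_1, \dots, i_k)\, a(i_{\sigma(1)}, \dots, i_{\sigma(k)}) \leq \Bigl(\sum_i a(i)^2\Bigr)^{1/2} \Bigl(\sum_i a(i_{\sigma(1)}, \dots, i_{\sigma(k)})^2\Bigr)^{1/2} = \sum_i a(i)^2,
\]
since permuting the coordinates of the summation variable leaves the $\ell^2$ norm invariant. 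Summing over the $k!$ permutations produces the claimed bound.

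The only step demanding real care is establishing the Hermite orthogonality with the correct normalization, since that is precisely where the $k!$ factor (the permanent of the delta matrix) enters; the concluding Cauchy--Schwarz reduction is essentially automatic. As a sanity check on sharpness, when $a$ is fully symmetric each cross term $a(i)\, a(i_{\sigma(1)}, \dots, i_{\sigma(k)})$ collapses to $a(i)^2$, so the Cauchy--Schwarz step is tight and the $k!$ factor cannot be improved.
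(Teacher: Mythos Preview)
Your proof is correct. The paper does not actually prove this lemma; it merely cites it as Lemma 4.3 of Fang and R\"ollin (2015), so there is no in-paper proof to compare against. Your argument via the Rodrigues identity $H_{i_1\cdots i_k}(z)=(-1)^k\partial_{i_1\cdots i_k}\phi(z)/\phi(z)$, the permanent orthogonality relation for tensor Hermite polynomials, and the Cauchy--Schwarz step over the $k!$ permutations is the standard route and is precisely how the result is established in the cited reference.
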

Now, using the expression of $\partial_{jk} f$ in \eq{n05} with $c_0=\eps^2$, we have
\be{
R_1=R_{11}+R_{12},
}
where
\be{
R_{11}= \sum_{j,k=1}^d\E\left[ V_{jk} \int_0^{\eps^2} \frac{1}{2\sqrt{s}} \int_{\IR^d}\partial_j h(\sqrt{1-s}W+\sqrt{s} z)\partial_k \phi (z)dzds\right]
}
and 
\be{
R_{12}=\sum_{j,k=1}^d\E\left[ V_{jk}\int_{\eps^2}^1 -\frac{1}{2s} \int_{\IR^d} h(\sqrt{1-s}W+\sqrt{s} z)\partial_{jk} \phi (z)dzds\right].
}
For $R_{11}$, we use the Cauchy-Schwarz inequality and the bounds \eq{n06} and \eq{n07}, and obtain
\besn{\label{r11}
\left| R_{11} \right|
&=\left| \int_0^{\eps^2} \frac{1}{2\sqrt{s}} \int_{\IR^d} \E \left[ \sum_{j=1}^d\partial_j h(\sqrt{1-s}W+\sqrt{s} z)\sum_{k=1}^d V_{jk}\frac{\partial_k \phi(z)}{\phi(z)} \right]\phi(z)dz       ds    \right|  \\
&\leq\frac{C}{\eps} \int_0^{\eps^2} \frac{1}{2\sqrt{s}} \int_{\IR^d} \E \left[ \left\{\sum_{j=1}^d\left(\sum_{k=1}^d V_{jk}\frac{\partial_k \phi(z)}{\phi(z)}\right)^2\right\}^{1/2} \right]\phi(z)dz       ds      \\
&\leq\frac{C}{\eps} \int_0^{\eps^2} \frac{1}{2\sqrt{s}} \left\{\int_{\IR^d} \E \left[ \sum_{j=1}^d\left(\sum_{k=1}^d V_{jk}\frac{\partial_k \phi(z)}{\phi(z)}\right)^2 \right]\phi(z)dz \right\}^{1/2}      ds      \\
&\leq\frac{C}{\eps} \int_0^{\eps^2} \frac{1}{2\sqrt{s}} \left\{\E[\sum_{j=1}^d\sum_{k=1}^d V_{jk}^2] \right\}^{1/2}      ds      
\leq C\left\{ \sum_{j,k=1}^d \E [V_{jk}^2] \right\}^{1/2}.
}
The triangle inequality yields, for $V$ in \eq{v-eq},
\begin{align*}
\left\{ \sum_{j,k=1}^d \E [V_{jk}^2] \right\}^{1/2}
\leq\|I_d-\Var(W)\|_{H.S.}
+\frac{1}{2}\left\{ \sum_{j,k=1}^d\Var \left[\sum_{i=1}^n\xi_{ij}\xi_{ik}\right]\right\}^{1/2}.
\end{align*}
Moreover,
\ben{\label{v-est}
\Var \left[\sum_{i=1}^n\xi_{ij}\xi_{ik}\right]  
=\sum_{i=1}^n\Var \left[\xi_{ij}\xi_{ik}\right]  
\leq \sum_{i=1}^n\E \left[\xi_{ij}^2\xi_{ik}^2\right], 
}
Therefore,
we obtain
\ba{
\left| R_{11} \right| 
\leq C\delta_\mathcal{A}(W,I_d).
}
Applying similar arguments, we have, for $R_{12}$,
\besn{\label{r12}
|R_{12}|
&=\left| \int_{\eps^2}^1 (-\frac{1}{2s})\int_{\IR^d} \E    \left[ h(\sqrt{1-s}W+\sqrt{s} z) \sum_{j,k=1}^dV_{jk}\frac{\partial_{jk} \phi(z)}{\phi(z)} \right]\phi(z)dz     ds \right|    \\
&\leq \int_{\eps^2}^1\frac{1}{2s}\int_{\IR^d} \E \left|\sum_{j,k=1}^dV_{jk}\frac{\partial_{jk} \phi(z)}{\phi(z)}\right|\phi(z)dz     ds\\
&\leq \int_{\eps^2}^1\frac{1}{2s}\left\{\int_{\IR^d} \E \left[ \left(\sum_{j,k=1}^dV_{jk}\frac{\partial_{jk} \phi(z)}{\phi(z)}\right)^2 \right] \phi(z)dz\right\}^{1/2}     ds    \\
&\leq C\int_{\eps^2}^1\frac{1}{2s}\left\{\E[\sum_{j,k=1}^dV_{jk}^2]\right\}^{1/2}     ds    
\leq C|\log \eps| \delta_\mathcal{A}(W,I_d).
}
Therefore,
\ben{\label{n19}
|R_1|\leq C (|\log \eps|\vee 1) \delta_\mathcal{A}(W,I_d).
}
Next, we bound $R_2$. 
Take $0<\eta\leq1$ arbitrarily.
Using the expression of $\partial_{jkl} f$ in \eq{n10} with $c_0=\eta^2$ and the two equivalent expressions \eqref{n09-0} and \eqref{n09} for $R_2$, we have
\be{
R_2=R_{21}+R_{22},
}
where
\bes{
R_{21}=&\frac{1}{2} \sum_{i=1}^n \sum_{j,k,l=1}^d \E \Big\{ U(\xi_{ij}'-\xi_{ij})(\xi_{ik}'-\xi_{ik})(\xi_{il}'-\xi_{il})
\int_0^{\eta^2} \frac{\sqrt{1-s}}{2\sqrt{s}} \\
& \times \int_{\IR^d} \partial_{jk} h(\sqrt{1-s}(W+(1-U)(\xi_i'-\xi_i))+\sqrt{s}z)\partial_l \phi(z) dz ds \Big\}
}
and
\besn{\label{n32}
R_{22}=&\frac{1}{4} \sum_{i=1}^n \sum_{j,k,l=1}^d \E \Big\{ U(\xi_{ij}'-\xi_{ij})(\xi_{ik}'-\xi_{ik})(\xi_{il}'-\xi_{il})
\int_{\eta^2}^1 -\frac{\sqrt{1-s}}{2s} \\
& \times \int_{\IR^d} [\partial_jh(\sqrt{1-s}(W+(1-U)(\xi_i'-\xi_i))+\sqrt{s}z)-\partial_jh(\sqrt{1-s}(W+U(\xi_i'-\xi_i))+\sqrt{s}z)] \partial_{kl} \phi(z) dz ds \Big\} \\
=& \frac{1}{4} \sum_{i=1}^n \sum_{j,k,l, m=1}^d \E \Big\{ U (1-2U)(\xi_{ij}'-\xi_{ij})(\xi_{ik}'-\xi_{ik})(\xi_{il}'-\xi_{il})(\xi_{im}'-\xi_{im})
\int_{\eta^2}^1 -\frac{1-s}{2s} \\
&\times \int_{\IR^d} \partial_{jm} h(\sqrt{1-s}(W+(U+(1-2U)U')(\xi_i'-\xi_i))+\sqrt{s}z) \partial_{kl} \phi(z) dz ds \Big\},
}
where $U'$ is a uniform random variable on $[0,1]$ independent of everything else and we used the mean value theorem in the last equality. 
Let $W^{(i)}=W-\xi_i$ for $i\in \{1,\dots, n\}$. We will use the fact that $\nabla h$ is non-zero only in $A^\eps\backslash A$ and bound
\be{
\P(\sqrt{1-s} W^{(i)}\in A_i^\eps\backslash A_i \mid U, U', \xi_i, \xi_i'),
}
where $0<s<1$ and $A_i$ is a convex set which may depend on $U$, $U'$, $\xi_i$, $\xi_i'$, $s$ and $z$.
We have
\besn{\label{n14}
&\P(\sqrt{1-s} W^{(i)}\in A_i^\eps\backslash A_i \mid U, U', \xi_i, \xi_i')\\
=&\P( W^{(i)} \in  \frac{1}{\sqrt{1-s}} (A_i^\eps\backslash A_i) \mid U, U', \xi_i, \xi_i')\\
\leq & 4d^{1/4} \frac{\eps}{ \sqrt{1-s}} + 2\sup_{A\in \mathcal{A}} \left|\P( W^{(i)} \in A) -P(Z\in A)    \right|,
}
where we used the $4d^{1/4}$ upper bound for the Gaussian surface area of any convex set in Lemma \ref{nl2}. From \eq{n11}, and regarding $W^{(i)}=\sum_{j: j\ne i} \xi_j+0$ as a sum of $n$ independent centered random vectors, we have
\besn{\label{n13}
&\sup_{A\in \mathcal{A}} \left|\P(W^{(i)} \in A) -P(Z\in A)    \right|\\
\leq & K(\beta_0) \max \left\{ \beta_0,  \Psi(\delta_\mathcal{A}(W^{(i)}, I_d))  \right\}.
}
Since
\ba{
\|\Var(W)-\Var(W^{(i)})\|_{H.S.}
&=\sqrt{\sum_{j,k=1}^d(\E\xi_{ij}\xi_{ik})^2}
\leq \sqrt{\E|\xi_i|^4}
}
and $\sqrt{x}+\sqrt{y}\leq\sqrt{2(x+y)}$ for any $x,y\geq0$, we have
\ba{
\delta_\mathcal{A}(W^{(i)},\Sigma)
&\leq \|I_d-\Var(W)\|_{H.S.}
+\sqrt{\E[|\xi_i|^4]}
+\sqrt{\sum_{j=1\atop j\ne i}^n \E[|\xi_j|^4]}\\
&\leq \|I_d-\Var(W)\|_{H.S.}+\sqrt{2\sum_{j=1}^n \E[|\xi_j|^4]} 
\leq\sqrt{2}\delta_\mathcal{A}(W, I_d).
} 
Hence, we obtain by Lemma \ref{lem:psi} 
\ba{
\Psi(\delta_\mathcal{A}(W^{(i)},\Sigma))\leq C\Psi(\delta_\mathcal{A}(W, I_d))\leq C\bar\beta.
}
Thus we conclude
\ben{\label{n15}
\sup_{A\in \mathcal{A}} \left|\P(W^{(i)} \in A) -P(Z\in A)    \right|\leq K(\beta_0)\max \left\{ \beta_0,  C\bar\beta\right\}
\leq C K(\beta_0)\bar\beta.
}
Applying \eq{n06}, \eq{n14}, \eq{n15}, and \eqref{raic4.9}, we have
\ben{\label{n16-0}
|R_{21}|
\leq \frac{C}{\eps^2} \sum_{i=1}^n \E[|\xi_i|^3] \left( d^{1/4} \eps +K(\beta_0)     \bar \beta  \right)\eta
}
and
\ben{\label{n16}
|R_{22}|\leq \frac{C}{\eps^2} \sum_{i=1}^n \E[|\xi_i|^4 ] \left( d^{1/4} \eps +K(\beta_0)     \bar \beta  \right) |\log\eta|.
}
We first consider \textbf{Case 1}: $\bar{\beta}\leq 0.5/d^{1/4}$ (cf. \eq{n12}).
Now, if $\sum_{i=1}^n \E[|\xi_i|^4]<\sum_{i=1}^n \E[|\xi_i|^3]$, choose $\eta=\sum_{i=1}^n \E[|\xi_i|^4]/\sum_{i=1}^n \E[|\xi_i|^3]<1$. Note that we have by the Cauchy-Schwarz inequality
\[
\sum_{i=1}^n \E[|\xi_i|^3]\leq\sqrt{\sum_{i=1}^n \E[|\xi_i|^2]\sum_{i=1}^n \E[|\xi_i|^4]}
=\sqrt{d\sum_{i=1}^n \E[|\xi_i|^4]}.
\]
Thus we obtain
\[
|\log \eta|\leq\frac{1}{2}\log d-\frac{1}{2}\log\left(\sum_{i=1}^n \E[|\xi_i|^4]\right).
\]
Since $(\sum_{i=1}^n \E [|\xi_i|^4])^{1/2}\leq \bar \beta$ and in the case under consideration, $\bar \beta\leq 0.5/d^{1/4}$, we have
\be{
|\log \eta|\leq C |\log (\sum_{i=1}^n \E [|\xi_i|^4])|.
} 
Therefore, \eqref{n16-0}--\eqref{n16} and $\Psi$ is an increasing function from Lemma \ref{lem:psi} yield
\besn{\label{n16-2}
|R_{21}|+|R_{22}|&\leq  \frac{C}{\eps^2} \sum_{i=1}^n \E[|\xi_i|^4] \left( d^{1/4} \eps +K(\beta_0)     \bar \beta  \right)\left(\left|\log\left(\sum_{i=1}^n \E[|\xi_i|^4]\right)\right|\vee 1\right)\\
&\leq  \frac{C\Psi^2(\delta_\mathcal{A}(W, I_d))}{\eps^2}\left( d^{1/4} \eps +K(\beta_0)     \bar \beta  \right).
}
This inequality also holds true if $\sum_{i=1}^n \E[|\xi_i|^4]\geq\sum_{i=1}^n \E[|\xi_i|^3]$ by taking $\eta=1$ in \eqref{n16-0}--\eqref{n16}. 
From \eq{n17}, \eq{n18}, \eq{n19}, \eq{n16-2}, we have
\besn{\label{n26}
&\sup_{A\in \mathcal{A}} |\P(W\in A)-\P(Z\in A)|\\
 \leq & 4d^{1/4}\eps+C (|\log \eps|\vee 1)\delta_\mathcal{A}(W, I_d) +\frac{C\Psi^2(\delta_\mathcal{A}(W, I_d))}{\eps^2}  \left( d^{1/4} \eps +K(\beta_0)     \bar \beta  \right) .
}
Choose $\eps=\min\{ \sqrt{2C} \Psi(\delta_\mathcal{A}(W, I_d)) ,1\}$ with the same absolute constant $C$ as in the third term on the right-hand side of \eq{n26}.
If $\eps<1$, then from \eq{n26} and $\bar{\beta}\leq 0.5/d^{1/4}$ in Case 1, we have 
$|\log \eps|\leq C|\log(\delta_{\mathcal{A}}(W, I_d))|$ and
\be{
\sup_{A\in \mathcal{A}} |\P(W\in A)-\P(Z\in A)|\leq \left( Cd^{1/4} +\frac{K(\beta_0)}{2} \right) \bar \beta;
}
hence
\ben{\label{n20}
\frac{\sup_{A\in \mathcal{A}} |\P(W\in A)-\P(Z\in A)|}{\bar \beta}\leq Cd^{1/4} +\frac{K(\beta_0)}{2}.
}
If $\eps=1$, then $\sum_{i=1}^n \E [|\xi_i|^4]$ and $\bar \beta$ are bounded away from 0 by an absolute constant; hence
\ben{\label{n29}
\frac{\sup_{A\in \mathcal{A}} |\P(W\in A)-\P(Z\in A)|}{\bar \beta}\leq \frac{1}{\bar \beta}\leq C.
}

We now consider \textbf{Case 2}: $\bar \beta>0.5/d^{1/4}$. We trivially estimate
\ben{\label{n21}
\frac{\sup_{A\in \mathcal{A}} |\P(W\in A)-\P(Z\in A)|}{\bar \beta}\leq \frac{1}{\bar \beta}\leq  C d^{1/4}.
}
Combining \eq{n20}, \eq{n29} and \eq{n21}, we obtain
\be{
 \frac{\sup_{A\in \mathcal{A}} |\P(W\in A)-\P(Z\in A)|}{\bar \beta}
\leq C d^{1/4}+\frac{K(\beta_0)}{2}.
}
Note that the right-hand side of the above bound does not depend on $W$. Taking supremum over $W$, we obtain
\ben{\label{n27}
K(\beta_0)\leq C d^{1/4}+\frac{K(\beta_0)}{2}.
}
This implies $\eq{n22}$, hence \eq{n01}.

\subsection{Proof of Proposition \ref{prop2}}

It is not difficult to see that \cite{Na76}'s example indeed satisfies the conditions stated in the proposition. We briefly summarize the construction for the sake of completeness. 

First, given an integer $n\geq3$, let $\{\eta_i\}_{i=1}^n$ be i.i.d.~variables such that
\[
\IP(\eta_1< y)=\Phi\left(\frac{y+a_n}{\sigma_n}\right)(1-p_n)+p_n1_{(x_n,\infty)}(y),\qquad y\in\mathbb{R},
\]
where $\Phi$ is the standard normal distribution function and $x_n,p_n,a_n,\sigma_n$ are positive constants satisfying the following conditions:
\ba{
x_n&=\frac{\sqrt{n}}{\log n},&
p_nx_n&=a_n(1-p_n),&
p_nx_n^2&=\frac{1}{2},&
\frac{1}{2}+(\sigma_n^2+a_n^2)(1-p_n)&=1.
}
By construction, we have 
\ba{
\E\eta_1&=0,&\E[\eta_1^2]&=1,&\E[\eta_1^3]&=x_n-(3a_n\sigma_n^2+a_n^3)(1-p_n)
} 
and
\[
\frac{x_n^2}{2}\leq\E[\eta_1^4]\leq\frac{x_n^2}{2}+3(a_n+\sigma_n)^4.
\]
Moreover, \cite{Na76} has shown that, for sufficiently large $n$, 
\[
\IP\left(\frac{1}{\sqrt n}\sum_{i=1}^n\eta_i<0\right)-\frac{1}{2}>\frac{\E[\eta_1^3]}{7\sqrt{2\pi n}}.
\]
Since we have
\ba{
\frac{x_n}{\sqrt 2}\leq\sqrt{\E[\eta_1^4]}\leq\frac{x_n}{\sqrt 2}+\sqrt{3}(a_n+\sigma_n)^2
\leq\frac{\E[\eta_1^3]}{\sqrt 2}+\frac{3a_n\sigma_n^2+a_n^3}{\sqrt 2}+\sqrt{3}(a_n+\sigma_n)^2,
}
and $\sigma_n^2+a_n^2=\frac{1}{2(1-p_n)}\leq1$ for sufficiently large $n$, we conclude
\[
\IP\left(\frac{1}{\sqrt n}\sum_{i=1}^n\eta_i<0\right)-\frac{1}{2}>\frac{\sqrt{\E[\eta_1^4]}}{8\sqrt{\pi n}}
\]
for sufficiently large $n$. 

Now let $\zeta_{i,j}$ ($i,j=1,2,\dots)$ be independent standard normal variables independent of $\{\xi_i\}_{i=1}^n$. Then we define the independent random vectors $\{\xi_i\}_{i=1}^n$ in $\mathbb{R}^d$ by 
\[
\xi_i:=\frac{1}{\sqrt n}(\eta_i,\zeta_{i,1},\dots,\zeta_{i,d-1})^\top,\qquad i=1,\dots,n.
\] 
We have
\ba{
\sum_{i=1}^n\E|\xi_i|^4
=n\E[|\xi_1|^4]\leq\frac{1}{n}\left\{2\E[\eta_1^4]+2\E \Big[\big(\sum_{j=1}^{d-1}\zeta_{i,j}^2\big)^2 \Big]\right\}
\leq\frac{2\E[\eta_1^4]+6d^2}{n}.
}
Therefore, if $d\leq \sqrt{n}/\log n=x_n$, we obtain
\ba{
\sum_{i=1}^n\E[|\xi_i|^4]
\leq\frac{14\E[\eta_1^4]}{n}.
}
Thus, we conclude
\[
\IP\left(\frac{1}{\sqrt n}\sum_{i=1}^n\eta_i<0\right)-\frac{1}{2}>C_0\sqrt{\sum_{i=1}^n\E[|\xi_i|^4]},
\]
where $C_0:=(8\sqrt{14\pi})^{-1}$. Hence, for $A=\{x\in\mathbb{R}^d:x_1=0\}$, we have
\[
|\IP(W\in A)-\IP(Z\in A)|>C_0\sqrt{\sum_{i=1}^n\E[|\xi_i|^4]}.
\]
This completes the proof.

\subsection{Proof of Theorem \ref{BALL}}

We first note that, for any $d\times d$ orthogonal matrix $U$, we have $UW=\sum_{i=1}^nU\xi_i$, $UZ\sim N(0,U\Sigma U^\top)$, $\delta_\mathcal{B}(UW,U\Sigma U^\top)=\delta_\mathcal{B}(W,\Sigma)$ and $UA\in\mathcal{B}$ for all $A\in\mathcal{B}$. Therefore, it is enough to prove \eqref{ball1} when $\Sigma$ is diagonal with positive diagonal entries, which we assume below. 
The proof is a combination of a smoothing argument and a Gaussian anti-concentration inequality for ellipsoids by \cite{Zh20}, the decomposition \eqref{n18}, and a recursive argument by \cite{Ra19a}. 

Fix $\beta_0>0$. Define
\ben{\label{n11b}
K'(\beta_0)=\sup_{W,\Sigma} \frac{\sup_{A\in \mathcal{B}}|\P(W\in A)-\P(Z\in A)|}{\max \left\{\beta_0, \Psi\left(\delta_{\mathcal{B}}(W,\Sigma)\right)\right\}},
}
where $Z\sim N(0,\Sigma)$ and the first supremum is taken over the family of all sums $W=\sum_{i=1}^n \xi_i$ of $n$ independent centered random vectors with $\E |\xi_i|^4<\infty$, and over diagonal matrices $\Sigma$ with positive entries.
We will obtain a recursive inequality for $K'(\beta_0)$ and prove that 
\ben{\label{n22b}
K'(\beta_0)\leq  C
}
for an absolute constant $C$ that does not depend on $\beta_0$. Eq.\eq{ball1} then follows by sending $\beta_0\to 0$.

Now we fix a $W=\sum_{i=1}^n \xi_i$, $n\geq 1$, and $\Sigma$ in the aforementioned family (will take sup in \eq{n31}). Let 
\ben{\label{n12b}
\bar \beta=\max \left\{\beta_0,  \Psi\left(\delta_\mathcal{B}(W,\Sigma)\right)\right\}.
}

\begin{lemma}[Lemma A.3 of \cite{Zh20}]\label{nl1b}
For any $A\in \mathcal{B}$ and $\eps>0$, there exists a $C^\infty$ function $\tilde h_{A,\eps}$ (which depends only on $A$ and $\eps$) such that
\ben{\label{n06b-1}
\tilde h_{A,\eps}(x)=1\ \text{for}\ x\in A,\qquad \tilde h_{A,\eps}(x)=0\ \text{for}\ x\in \mathbb{R}^d\backslash A^\eps,\qquad 0\leq \tilde h_{A,\eps}(x)\leq 1,
}
and
\ben{\label{n06b}
M_r(\tilde h_{A,\eps})\leq \frac{C}{\eps^r}\qquad\text{for }r=1,2,3,4,
}
where $C$ is an absolute constant that does not depend on $A$ and $\eps$. 
\end{lemma}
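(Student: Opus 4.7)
Every $A \in \mathcal{B}$ can be written as $\{x \in \mathbb{R}^d : |x - c| \leq R\}$ for some center $c \in \mathbb{R}^d$ and radius $R \geq 0$. The plan is to build $\tilde{h}_{A,\eps}$ as a radial bump; the naive choice $\rho(|x-c|)$ has singular derivatives at $c$ when $R < \eps$, so I instead work with the squared distance $|x-c|^2$, which is a polynomial and hence $C^\infty$ on all of $\mathbb{R}^d$. The proof is otherwise a straightforward combination of a universal smooth cutoff with the chain rule.

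Fix once and for all a template $\rho \in C^\infty(\mathbb{R};[0,1])$ with $\rho \equiv 1$ on $(-\infty,0]$, $\rho \equiv 0$ on $[1,\infty)$, and $\sup_t|\rho^{(k)}(t)| \leq C_k$ for $k=1,2,3,4$; such a $\rho$ is standard. Set $\eps_0 := (R+\eps)^2 - R^2 = \eps(2R+\eps)$ and define
\[
\tilde{h}_{A,\eps}(x) := \rho\!\left(\frac{|x-c|^2 - R^2}{\eps_0}\right).
\]
Since the argument of $\rho$ is $\leq 0$ precisely on $A$ and $\geq 1$ precisely on $\mathbb{R}^d \setminus A^\eps$, the pointwise properties in \eqref{n06b-1} hold, and $\tilde{h}_{A,\eps}$ is $C^\infty$ as a composition of $C^\infty$ functions.

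For the derivative bounds, write $\phi(x) := |x-c|^2 - R^2$, so $\nabla\phi(x) = 2(x-c)$, $\nabla^2\phi = 2 I_d$, and $\nabla^k\phi \equiv 0$ for $k \geq 3$. Iterating the chain rule, each component of $\nabla^r \tilde{h}_{A,\eps}(x)$ is a finite sum of terms of the form $\eps_0^{-m}\rho^{(m)}(\phi/\eps_0)$ multiplied by a product of $m_1$ first-order and $m_2$ second-order partial derivatives of $\phi$, where $m_1 + 2m_2 = r$ and $m_1 + m_2 = m$; in particular $r/2 \leq m \leq r$. Since $\rho^{(m)}$ is supported in $[0,1]$, such terms are nonzero only when $|x-c| \leq R+\eps$, so each contributes at most $C_r (R+\eps)^{m_1} \eps_0^{-m} = C_r (R+\eps)^{2m-r}\eps_0^{-m}$ to the injective norm.

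The one nontrivial step — and the only place case analysis enters — is checking that $(R+\eps)^{2m-r}\eps_0^{-m} \leq C\eps^{-r}$ uniformly in $R \geq 0$ and in $r/2 \leq m \leq r$. When $R \geq \eps$ one has $R+\eps \asymp 2R+\eps$, hence $\eps_0 \asymp \eps(R+\eps)$, and the inequality reduces to $(R+\eps)^{m-r} \leq C\eps^{m-r}$, which holds since $m \leq r$ and $R+\eps \geq \eps$. When $R < \eps$ one has $R+\eps \leq 2\eps$ and $\eps_0 \geq \eps^2$, so $(R+\eps)^{2m-r}\eps_0^{-m} \leq 2^{2m-r}\eps^{2m-r}\cdot\eps^{-2m} = 2^{2m-r}\eps^{-r}$. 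Summing over the finitely many index combinations that arise for $r \leq 4$ then yields $M_r(\tilde{h}_{A,\eps}) \leq C/\eps^r$ for $r=1,2,3,4$, completing the proof.
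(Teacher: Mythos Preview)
Your proof is correct. The paper does not give its own proof of this lemma; it simply quotes it as Lemma~A.3 of \cite{Zh20}. Your construction via the squared distance $\phi(x)=|x-c|^2-R^2$ is exactly the standard one (and is in fact the construction in \cite{Zh20}): composing a fixed one-dimensional cutoff $\rho$ with $\phi/\eps_0$, where $\eps_0=(R+\eps)^2-R^2$, then using Fa\`a di Bruno together with the fact that $\nabla^k\phi\equiv0$ for $k\geq3$ and the support constraint $|x-c|\leq R+\eps$ on $\mathrm{supp}\,\rho^{(m)}$. The only point requiring care is the uniform-in-$R$ bound $(R+\eps)^{2m-r}\eps_0^{-m}\leq C\eps^{-r}$, and your two-case split ($R\geq\eps$ versus $R<\eps$) handles this cleanly.
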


\begin{lemma}[Lemma A.4 of \cite{Zh20}]\label{lem4b}
For any $d$-dimensional random vector $W$ and any $\eps>0$,
\ben{\label{n17b}
  \sup_{A\in \mathcal{B}} |\P(W\in A)-\P(Z\in A)| \leq \sup_{A\in\mathcal{B}}\P(Z\in A^\eps\setminus A) +
    \sup_{A\in\mathcal{B}}\babs{\E\tilde h_{A,\eps}(W)-\E\tilde h_{A,\eps}(Z)},
}
where $\tilde h_{A,\eps}$ is as in Lemma \ref{nl1b}. 
\end{lemma}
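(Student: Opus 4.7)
The plan is a standard one-sided smoothing argument, applied separately to the two sides of $\P(W\in A)-\P(Z\in A)$ and then combined by taking a supremum over $A\in\mcl{B}$.

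For the easy direction $\P(W\in A)-\P(Z\in A)$, the sandwich $1_A\leq\tilde h_{A,\eps}\leq 1_{A^\eps}$ from \eqref{n06b-1} suffices directly: inserting $\E\tilde h_{A,\eps}(Z)$ into $\P(W\in A)\leq\E\tilde h_{A,\eps}(W)$ and bounding the latter by $\P(Z\in A^\eps)=\P(Z\in A)+\P(Z\in A^\eps\setminus A)$ gives
\[
\P(W\in A)-\P(Z\in A)\leq \P(Z\in A^\eps\setminus A)+|\E\tilde h_{A,\eps}(W)-\E\tilde h_{A,\eps}(Z)|,
\]
which is dominated by the right-hand side of \eqref{n17b}.

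For the reverse direction $\P(Z\in A)-\P(W\in A)$ I would apply the same idea to a shrunken version of $A$. Writing $A=B(x_0,r)$ and setting $A_-:=B(x_0,(r-\eps)_+)$, in the regime $r\geq\eps$ one has $A_-\in\mcl{B}$ with $A_-^\eps=A$, so $\tilde h_{A_-,\eps}\leq 1_{A_-^\eps}=1_A$ while $\tilde h_{A_-,\eps}\geq 1_{A_-}$; the symmetric chaining then yields
\[
\P(Z\in A)-\P(W\in A)\leq \P(Z\in A_-^\eps\setminus A_-)+|\E\tilde h_{A_-,\eps}(W)-\E\tilde h_{A_-,\eps}(Z)|,
\]
again absorbed by the right-hand side of \eqref{n17b}.

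The one mildly subtle point, which I expect to be the main obstacle, is the boundary regime $r<\eps$, in which the shrunken ball collapses to a point and the construction above degenerates. I would handle it by approximation: for any $\delta>0$, the ball $A_\delta:=B(x_0,\delta)\in\mcl{B}$ satisfies $A\subset A_\delta^\eps$, so that $\P(Z\in A)-\P(W\in A)\leq \P(Z\in A_\delta^\eps)=\P(Z\in A_\delta^\eps\setminus A_\delta)+\P(Z\in A_\delta)$. Letting $\delta\downarrow 0$ and using continuity of the Gaussian measure along decreasing sets (so $\P(Z\in B(x_0,\delta))\to\P(Z=x_0)=0$, which is implicit in having a density), one obtains $\P(Z\in A)-\P(W\in A)\leq \sup_{A'\in\mcl{B}}\P(Z\in A'^\eps\setminus A')$. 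Assembling the two directions in all cases and taking the supremum over $A\in\mcl{B}$ delivers \eqref{n17b}.
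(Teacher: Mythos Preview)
Your argument is correct. The paper does not actually give a proof of this lemma; it is quoted verbatim as Lemma~A.4 of \cite{Zh20}, so there is no in-paper proof to compare against. Your two-sided sandwich via $1_A\leq\tilde h_{A,\eps}\leq 1_{A^\eps}$ together with the inner ball $A_-=B(x_0,r-\eps)$ is exactly the standard smoothing argument one expects here, and your treatment of the degenerate regime $r<\eps$ via $A_\delta=B(x_0,\delta)$ with $\delta\downarrow0$ is fine in context since $Z\sim N(0,\Sigma)$ with $\Sigma$ invertible (this is the standing assumption in the surrounding proof of Theorem~\ref{BALL}), so $\P(Z=x_0)=0$. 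One could also dispose of the boundary case without a limit by noting that for $r<\eps$ one has $A\subset B(x_0,\eps)=A_0^\eps$ with $A_0:=B(x_0,0)$, provided singletons are admitted in $\mcl{B}$; your limiting version avoids that convention issue.
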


%
Recall that we assumed without loss of generality that $\Sigma$ is diagonal with positive diagonal entries.
We write $\sigma_j$ for the $j$-th diagonal entry of $\Sigma^{1/2}$. Set $\tilde\sigma:=\min\{\sigma_1,\dots,\sigma_d\}$. 
\begin{lemma}[Lemma A.2 of \cite{Zh20}]\label{anti-ball}
For any $\eps>0$,
\be{
\sup_{A\in\mathcal{B}}\P(Z\in A^\eps\setminus A)\leq C\tilde\sigma^{-1}\eps.
}
\end{lemma}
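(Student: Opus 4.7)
The plan is to reduce the statement to a dimension-free uniform bound on the density of the non-central chi distribution. Every $A\in\mathcal{B}$ is a ball $B(c,r)$, so $A^\eps\setminus A=\{z:r<|z-c|\leq r+\eps\}$ and the assertion is equivalent to
\[
\sup_{c\in\IR^d,\ r\geq0}\P\bigl(r<|Z-c|\leq r+\eps\bigr)\leq C\tilde\sigma^{-1}\eps.
\]
I would establish this by showing that the density of $T:=|Z-c|$ is bounded by $C/\tilde\sigma$ uniformly in $c$, so that integrating over an interval of length $\eps$ produces the required factor.

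The key reduction exploits the convolution structure of diagonal Gaussians. Since $\sigma_j\geq\tilde\sigma$ for all $j$, the matrix $D:=\Sigma-\tilde\sigma^2 I_d$ is diagonal with nonnegative entries; hence $Z\eqlaw\tilde\sigma N+M$, where $N\sim N(0,I_d)$ and $M\sim N(0,D)$ are independent. Conditionally on $M=m$, $T$ is distributed as $\tilde\sigma|N-c_m|$ with $c_m:=(c-m)/\tilde\sigma$, so the conditional density of $T$ at $t$ equals $\tilde\sigma^{-1}f_d(t/\tilde\sigma;c_m)$, where $f_d(\,\cdot\,;\mu)$ denotes the density of $|N-\mu|$ for standard Gaussian $N$ in $\IR^d$. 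Averaging over $M$ yields $p_T(t)=\tilde\sigma^{-1}\IE\bigl[f_d(t/\tilde\sigma;c_M)\bigr]$, reducing the problem to a single dimension-free density bound on $f_d$.

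The remaining task, and the main obstacle, is the sub-lemma that $\sup_{d,\mu,s}f_d(s;\mu)\leq C_0$ for an absolute constant $C_0$. The case $\mu=0$ is elementary: $f_d(\,\cdot\,;0)$ is the chi density $s^{d-1}e^{-s^2/2}/(2^{d/2-1}\Gamma(d/2))$, which is maximized at $s=\sqrt{d-1}$, and Stirling's formula shows this maximum tends to $1/\sqrt{\pi}$ as $d\to\infty$. For $\mu\neq 0$, the orthogonal decomposition $N=\alpha\mu/|\mu|+N^\perp$ with $\alpha\sim N(0,1)$ independent of $|N^\perp|^2\sim\chi^2_{d-1}$ gives $|N-\mu|^2=(\alpha-|\mu|)^2+|N^\perp|^2$ and the classical non-central chi density formula
\[
f_d(s;\mu)=s\bigl(s/|\mu|\bigr)^{(d-2)/2}e^{-(s^2+|\mu|^2)/2}I_{(d-2)/2}(s|\mu|),
\]
where $I_\nu$ is the modified Bessel function of the first kind. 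The uniform bound $f_d\leq C_0$ then follows from standard uniform asymptotic estimates for $I_\nu$, which cover both the small-$|\mu|$ regime (where $f_d$ is a perturbation of the chi density) and the large-$|\mu|$ regime (where the mass concentrates near $s=|\mu|$ with peak density close to $1/\sqrt{2\pi}$). Integrating $p_T(t)\leq C_0/\tilde\sigma$ over $(r,r+\eps]$ then completes the proof.
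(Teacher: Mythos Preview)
The paper does not prove this lemma; it quotes it verbatim as Lemma~A.2 of Zhilova~(2020), so there is no in-paper argument to compare against. Your task, then, is simply to produce a self-contained proof, and the strategy you outline is the natural one and is correct: the decomposition $Z\eqlaw\tilde\sigma N+M$ with $N\sim N(0,I_d)$ independent of $M\sim N(0,\Sigma-\tilde\sigma^2 I_d)$ reduces everything to the dimension-free density bound $\sup_{d,\mu,s}f_d(s;\mu)\leq C_0$ for the non-central chi density, after which integrating over an interval of length~$\eps$ gives the claim.

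The one place where your write-up is thin is the justification of that uniform density bound. Invoking ``standard uniform asymptotic estimates for $I_\nu$'' is accurate but vague; the relevant uniform expansion (covering $\nu\to\infty$ with $z/\nu$ ranging over all positive reals) is the Debye/Olver expansion, and one has to check that it controls the whole $(s,|\mu|,d)$ range simultaneously. If you want to avoid Bessel asymptotics altogether, a cleaner route is to observe that the desired bound is exactly the statement that the standard Gaussian surface measure of any Euclidean sphere in $\IR^d$ is bounded by an absolute constant. This is classical (see e.g.\ Sazonov~(1972), already cited in the paper) and can also be derived directly: writing $N=(N_1,N')$ with $N_1$ along $\mu$, one has $|N-\mu|^2=(N_1-|\mu|)^2+|N'|^2$, and conditioning on $|N'|$ reduces the shell probability to a one-dimensional Gaussian computation plus the central chi density bound you already established. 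Either route closes the argument; just make explicit which one you are using.
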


We now fix $A\in \mathcal{B}$ (will take sup in \eq{n30}), $0<\eps\leq \tilde \sigma$, write $h:=\tilde h_{A, \eps}$ and proceed to bound $|\E h(W)-\E h(Z)|$ by the decomposition \eq{n18}. 
Consider the solution $f$ to the Stein equation \eqref{n03}, which is given by \eqref{sol}. 
Since $h$ has bounded partial derivatives up to the third order, $f$ is thrice differentiable with bounded partial derivatives. Using the integration by parts formula, we have for $1\leq j, k, l\leq d$ and any $0\leq c_0\leq 1$ that
\besn{   \label{n05b}
  \partial_{jk} f (w) & =
  \int_0^{c_0}  \frac{1}{2\sqrt{s}} \int_{\IR^d}
     \partial_j h(\sqrt{1-s}w+\sqrt{s}\Sigma^{1/2}z) \sigma_k^{-1}\partial_k \phi (z) dz ds\\
   &\quad +\int_{c_0}^1- \frac{1}{2s}
        \int_{\IR^d} h (\sqrt{1-s}w+\sqrt{s}\Sigma^{1/2}z) \sigma_j^{-1}\sigma_k^{-1}\partial_{jk}\phi (z) dz ds
}
and
\besn{\label{n10b}
  \partial_{jkl} f (w) & =
  \int_0^{c_0} \frac{\sqrt{1-s}}{2\sqrt{s}} \int_{\IR^d}
     \partial_{jk} h (\sqrt{1-s}w+\sqrt{s}\Sigma^{1/2}z) \sigma_l^{-1}\partial_l \phi (z) dzds\\
     &\quad +\int_{c_0}^1 \frac{\sqrt{1-s}}{2s^{3/2}}
        \int_{\IR^d} h (\sqrt{1-s}w+\sqrt{s}\Sigma^{1/2}z)\sigma_j^{-1}\sigma_k^{-1}\sigma_l^{-1} \partial_{jkl} \phi (z) dz ds.
}

We first bound $R_1$ in \eqref{n25}. Using the expression of $\partial_{jk} f$ in \eq{n05b} with $c_0=(\eps/\tilde{\sigma} )^2$, we have
\be{
R_1=R_{11}+R_{12},
}
where
\be{
R_{11}= \sum_{j,k=1}^d\E\left[ V_{jk} \int_0^{(\eps/\tilde{\sigma})^2} \frac{1}{2\sqrt{s}} \int_{\IR^d}\partial_j h(\sqrt{1-s}W+\sqrt{s}\Sigma^{1/2} z)\sigma_k^{-1}\partial_k \phi (z)dzds\right]
}
and 
\be{
R_{12}=\sum_{j,k=1}^d\E\left[ V_{jk}\int_{(\eps/\tilde{\sigma})^2}^1 -\frac{1}{2s} \int_{\IR^d} h(\sqrt{1-s}W+\sqrt{s} \Sigma^{1/2}z)\sigma_j^{-1}\sigma_k^{-1}\partial_{jk} \phi (z)dzds\right].
}
For $R_{11}$, applying analogous arguments to \eqref{r11}, we obtain
\[
|R_{11}|\leq C\tilde{\sigma}^{-1}\left\{ \sum_{j,k=1}^d \sigma_k^{-2}\E [V_{jk}^2] \right\}^{1/2}
\leq C\tilde{\sigma}^{-2}\left\{ \sum_{j,k=1}^d \E [V_{jk}^2] \right\}^{1/2}.
\]
The triangle inequality yields, for $V$ in \eq{v-eq},
\begin{align*}
\left\{ \sum_{j,k=1}^d \E [V_{jk}^2] \right\}^{1/2}
\leq\|\Sigma-\Var(W)\|_{H.S.}
+\frac{1}{2}\left\{ \sum_{j,k=1}^d\Var \left[\sum_{i=1}^n\xi_{ij}\xi_{ik}\right]\right\}^{1/2}.
\end{align*}
Therefore, we deduce from \eqref{v-est}
\[
|R_{11}|\leq C\delta_\mathcal{B}(W,\Sigma).
\]
For $R_{12}$, we apply analogous arguments to \eqref{r12} and obtain
\[
|R_{12}|\leq  C|\log(\eps/\tilde{\sigma})|\left\{ \sum_{j,k=1}^d (\sigma_j\sigma_k)^{-2}\E [V_{jk}^2] \right\}^{1/2}
\leq C|\log (\eps/\tilde{\sigma})|\delta_\mathcal{B}(W,\Sigma).
\]
Therefore,
\ben{\label{n19b}
|R_1|\leq C(|\log(\eps/\tilde{\sigma})|\vee 1)\delta_\mathcal{B}(W,\Sigma).
}
Next, we bound $R_2$ in \eqref{n09}. 
Using the expression of $\partial_{jkl} f$ in \eq{n10b} with $c_0=(\eps/\tilde{\sigma})^2$, we have
\be{
R_2=R_{21}+R_{22},
}
where
\begin{multline*}
R_{21}=\frac{1}{4} \sum_{i=1}^n \sum_{j,k,l=1}^d \E \Big\{ U(\xi_{ij}'-\xi_{ij})(\xi_{ik}'-\xi_{ik})(\xi_{il}'-\xi_{il})
\int_0^{(\eps/\tilde{\sigma})^2} \frac{\sqrt{1-s}}{2\sqrt{s}} \\
\times \int_{\IR^d} [\partial_{jk} h(\sqrt{1-s}(W+(1-U)(\xi_i'-\xi_i))+\sqrt{s}\Sigma^{1/2}z)-\partial_{jk} h(\sqrt{1-s}(W+U(\xi_i'-\xi_i))+\sqrt{s}\Sigma^{1/2}z)]\\
\times \sigma_l^{-1}\partial_l \phi(z) dz ds \Big\}
\end{multline*}
and
\begin{multline*}
R_{22}=\frac{1}{4} \sum_{i=1}^n \sum_{j,k,l=1}^d \E \Big\{ U(\xi_{ij}'-\xi_{ij})(\xi_{ik}'-\xi_{ik})(\xi_{il}'-\xi_{il})
\int_{(\eps/\tilde{\sigma})^2}^1 \frac{\sqrt{1-s}}{2s^{3/2}} \\
 \times \int_{\IR^d} [h(\sqrt{1-s}(W+(1-U)(\xi_i'-\xi_i))+\sqrt{s}\Sigma^{1/2}z)-h(\sqrt{1-s}(W+U(\xi_i'-\xi_i))+\sqrt{s}\Sigma^{1/2}z)]\\
 \times \sigma_j^{-1}\sigma_k^{-1}\sigma_l^{-1}\partial_{jkl} \phi(z) dz ds \Big\}.
\end{multline*}
Note that here we treat $R_2$ differently from the treatment for the convex set distance because $h$ now is smoother.
Let $W^{(i)}=W-\xi_i$ for $i\in \{1,\dots, n\}$. We will use the mean value theorem for the differences involving $h$ in the above two expressions as in \eq{n32}, the fact that $\nabla h$ is non-zero only in $A^\eps\backslash A$ and bound
\be{
\P(\sqrt{1-s} W^{(i)}\in A_i^\eps\backslash A_i \mid U, U', \xi_i, \xi_i'),
}
where $0<s<1$, $U'$ is a uniform random variable on $[0,1]$ independent of everything else, and $A_i$ is a Euclidean ball which may depend on $U$, $U'$, $\xi_i$, $\xi_i'$, $s$ and $\Sigma^{1/2}z$.
We have by Lemma \ref{anti-ball}
\besn{\label{n14b}
\P(\sqrt{1-s} W^{(i)}\in A_i^\eps\backslash A_i \mid U, U', \xi_i, \xi_i')
\leq  C\frac{\eps}{\tilde\sigma \sqrt{1-s}} + 2\sup_{A\in \mathcal{B}} \left|\P(W^{(i)} \in A) -P(Z\in A)    \right|.
}
From \eq{n11b}, and regarding $W^{(i)}=\sum_{j: j\ne i} \xi_j+0$ as a sum of $n$ independent centered random vectors,  we have
\besn{\label{n13b}
\sup_{A\in \mathcal{B}} \left|\P(W^{(i)} \in A) -P(Z\in A)    \right|
\leq  K'(\beta_0) \max \left\{ \beta_0,  \Psi(\delta_\mathcal{B}(W^{(i)},\Sigma))\right\}.
}
Since
\ba{
\|\Var(W)-\Var(W^{(i)})\|_{H.S.}
&=\sqrt{\sum_{j,k=1}^d(\E[\xi_{ij}\xi_{ik}])^2}
\leq \sqrt{\E[|\xi_i|^4]}
}
and $\sqrt{x}+\sqrt{y}\leq\sqrt{2(x+y)}$ for any $x,y\geq0$, we have
\ba{
\delta_\mathcal{B}(W^{(i)},\Sigma)
&\leq\|\Sigma^{-1}\|_{op}\left\{\|\Sigma-\Var(W)\|_{H.S.}
+\sqrt{\E[|\xi_i|^4]}
+\sqrt{\sum_{j=1\atop j\ne i}^n \E[|\xi_j|^4]}\right\}\\
&\leq\|\Sigma^{-1}\|_{op}\left\{\|\Sigma-\Var(W)\|_{H.S.}+\sqrt{2\sum_{j=1}^n \E[|\xi_j|^4]}\right\}
\leq\sqrt{2}\delta(W,\Sigma).
} 
Hence, we obtain by Lemma \ref{lem:psi} 
\ba{
\Psi(\delta(W^{(i)},\Sigma))\leq C\Psi(\delta(W,\Sigma))\leq C\bar\beta.
}
Thus we conclude
\ben{\label{n15b}
\sup_{A\in \mathcal{B}} \left|\P(W^{(i)} \in A) -P(Z\in A)    \right|\leq K'(\beta_0)\max \left\{ \beta_0,  C\bar\beta\right\}
\leq C K'(\beta_0)\bar\beta.
}
Using the mean value theorem for $R_{21},R_{22}$ and applying \eq{n06b}, \eq{n14b}, \eq{n15b} and \eqref{raic4.9}, we have
\ben{\label{n16-2b}
|R_{21}|+|R_{22}|\leq \frac{C}{\tilde{\sigma}^{2}\eps^2} \sum_{i=1}^n \E[|\xi_i|^4] \left(\frac{\eps}{\tilde{\sigma}} +K'(\beta_0)\bar \beta  \right).
}
From Lemmas \ref{lem4b}--\ref{anti-ball}, \eq{n18}, \eq{n19b}, \eq{n16-2b}, we have
\besn{\label{n30}
&\sup_{A\in \mathcal{B}} |P(W\in A)-P(Z\in A)|\\
 \leq & C\tilde{\sigma}^{-1}\eps+C(|\log (\eps/\tilde{\sigma})|\vee 1)\delta_{\mathcal{B}}(W,\Sigma)
+\frac{C\tilde{\sigma}^{2} \Psi^2(\delta_\mathcal{B}(W,\Sigma))}{\eps^2}  \left(\frac{\eps}{\tilde{\sigma}} +K'(\beta_0)\bar \beta \right).
}
Choose $\eps=\min\{ \sqrt{2C} \tilde{\sigma}\Psi(\delta_\mathcal{B}(W, \Sigma)), \tilde \sigma\} $ with the same absolute constant $C$ as in the third term on the right-hand side of \eq{n30}. If $\eps<\tilde \sigma$ and $\bar \beta\leq 0.5$, then from \eq{n30},
\be{
\sup_{A\in \mathcal{B}} |\P(W\in A)-\P(Z\in A)|\leq \left( C +\frac{K'(\beta_0)}{2} \right) \bar \beta;
}
hence
\ben{\label{n20b}
\frac{\sup_{A\in \mathcal{B}} |\P(W\in A)-\P(Z\in A)|}{\bar \beta}\leq C +\frac{K'(\beta_0)}{2}.
}
If $\bar \beta>0.5$ or if $\eps=\tilde \sigma$, then $\bar \beta$ is bounded away from 0 by an absolute constant; hence
\be{
\frac{\sup_{A\in \mathcal{A}} |\P(W\in A)-\P(Z\in A)|}{\bar \beta}\leq \frac{1}{\bar \beta}\leq C.
}
Note that the right-hand sides of the above two bounds do not depend on $W$ or $\Sigma$. Taking supremum over $W$ and $\Sigma$, we obtain
\ben{\label{n31}
K'(\beta_0)\leq C+\frac{K'(\beta_0)}{2}.
}
This implies $\eq{n22b}$, hence \eq{ball1}.

\subsection{Proof of Theorem \ref{BALL2}}

In this proof, we use a new smoothing of the indicator function of centered balls to apply a Gaussian anti-concentration inequality of \cite{GNSU19} (see Lemma \ref{anti-ball2}). The exchangeable pair approach and the symmetry argument used in the proofs of Theorems \ref{convex} and \ref{BALL} no longer help and we use Stein's leave-one-out trick in this proof.

Set $W^{(i)}:=W-\xi_i$ for $i=1,\dots,n$. 
The following simple lemma plays a key role in our proof (see \cite[Proposition A.3]{Po88} and \cite[Lemma 1]{PeSc18} for related results). 
\begin{lemma}\label{portnoy}
For every $i=1,\dots,n$,
\[
\E[(\xi_{i}\cdot W^{(i)})^2\mid\xi_i]\leq\xi_i^\top\Sigma_W\xi_i
\]
and
\[
\E[(\xi_i\cdot W^{(i)})^4\mid\xi_i]\leq|\xi_i|^4\sum_{u=1}^n\E[|\xi_u|^4]+3(\xi_i^\top\Sigma_W\xi_i)^2.
\]
\end{lemma}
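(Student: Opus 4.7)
The plan is to condition on $\xi_i$ and use the independence of $\xi_i$ and $W^{(i)}=\sum_{j\neq i}\xi_j$, reducing both inequalities to bounds on moments of the scalar linear statistic $\xi_i\cdot W^{(i)}=\sum_{j\neq i}(\xi_i\cdot\xi_j)$, whose summands are independent centered random variables once $\xi_i$ is fixed.

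For the first inequality, conditional on $\xi_i$ the random vector $W^{(i)}$ is still mean zero, so
\[
\E[(\xi_i\cdot W^{(i)})^2\mid\xi_i]=\xi_i^\top\Var(W^{(i)})\xi_i=\xi_i^\top(\Sigma_W-\E[\xi_i\xi_i^\top])\xi_i\leq\xi_i^\top\Sigma_W\xi_i,
\]
since $\E[\xi_i\xi_i^\top]$ is positive semi-definite. This is immediate and requires no real work.

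For the second inequality, I would write $Y_j:=\xi_i\cdot\xi_j$ for $j\neq i$, which conditional on $\xi_i$ are independent and centered, and apply the standard fourth-moment identity for a sum of independent centered variables, namely $\E[(\sum_jY_j)^4]=\sum_j\E[Y_j^4]+3\sum_{j\neq k}\E[Y_j^2]\E[Y_k^2]\leq\sum_j\E[Y_j^4]+3(\sum_j\E[Y_j^2])^2$. For the diagonal part I would use Cauchy--Schwarz $|\xi_i\cdot\xi_j|\leq|\xi_i||\xi_j|$ to get
\[
\sum_{j\neq i}\E[(\xi_i\cdot\xi_j)^4\mid\xi_i]\leq|\xi_i|^4\sum_{j\neq i}\E[|\xi_j|^4]\leq|\xi_i|^4\sum_{u=1}^n\E[|\xi_u|^4];
\]
for the off-diagonal part the first-moment argument already gives $\sum_{j\neq i}\E[(\xi_i\cdot\xi_j)^2\mid\xi_i]\leq\xi_i^\top\Sigma_W\xi_i$, so squaring yields the $3(\xi_i^\top\Sigma_W\xi_i)^2$ term.

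There isn't really a hard step here: the only mild point of care is making sure the symmetric bilinear expansion is handled correctly and that the bound $\Var(W^{(i)})\preceq\Sigma_W$ is invoked both times (once directly, once squared) so that the output is expressed in terms of $\Sigma_W$ rather than $\Sigma_{W^{(i)}}$, which is what the rest of the argument in Theorem \ref{BALL2} will want.
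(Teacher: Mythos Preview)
Your proposal is correct and matches the paper's proof essentially line for line: both condition on $\xi_i$, expand $\xi_i\cdot W^{(i)}$ as a sum of independent centered scalars $\xi_i\cdot\xi_j$, use the standard fourth-moment identity for such sums, bound the diagonal via Cauchy--Schwarz, and control the cross terms by the second-moment bound (which in turn rests on $\E[\xi_i\xi_i^\top]\succeq0$). Your presentation of the second-moment step via $\Var(W^{(i)})=\Sigma_W-\E[\xi_i\xi_i^\top]\preceq\Sigma_W$ is just a slightly more compact way of writing what the paper does by summing $\xi_i^\top\E[\xi_u\xi_u^\top]\xi_i$ over $u\neq i$ and then adding the $u=i$ term back.
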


\begin{proof}
Note that $\xi_1,\dots,\xi_n$ are independent and $\xi_i\cdot W^{(i)}=\sum_{u:u\neq i}\xi_i\cdot\xi_u$.  
Thus, a direct computation shows
\ba{
\E[(\xi_{i}\cdot W^{(i)})^2\mid\xi_i]
=\sum_{u:u\neq i}\E[(\xi_i\cdot\xi_{u})^2\mid\xi_i]
}
and
\ba{
\E[(\xi_i\cdot W^{(i)})^4\mid\xi_i]
&=\sum_{u:u\neq i}\E[(\xi_i\cdot\xi_u)^4\mid\xi_i]
+3\sum_{u,v:u,v\neq i,u\neq v}\E[(\xi_i\cdot\xi_u)^2\mid\xi_i]\E[(\xi_i\cdot\xi_v)^2\mid\xi_i].
}
The Schwarz inequality yields
\ba{
\sum_{u:u\neq i}\E[(\xi_i\cdot\xi_u)^4\mid\xi_i]
\leq\sum_{u:u\neq i}|\xi_i|^4\E[|\xi_u|^4]
\leq|\xi_i|^4\sum_{u=1}^n\E[|\xi_u|^4].
}
Also, noting that $\E[\xi_{u}\xi_{u}^\top]$ is positive semidefinite for every $u$, we have
\ba{
\sum_{u:u\neq i}\E[(\xi_i\cdot\xi_{u})^2\mid\xi_i]
&=\sum_{u:u\neq i}\E[(\xi_i^\top\xi_u)(\xi_u^\top\xi_i)\mid\xi_i]
=\sum_{u:u\neq i}\xi_i^\top\E[\xi_{u}\xi_{u}^\top]\xi_i\\
&\leq\sum_{u=1}^n\xi_i^\top\E[\xi_{u}\xi_{u}^\top]\xi_i
=\xi_i^\top\Sigma_W\xi_i
}
and
\ba{
\sum_{u,v:u,v\neq i,u\neq v}\E[(\xi_i\cdot\xi_u)^2\mid\xi_i]\E[(\xi_i\cdot\xi_v)^2\mid\xi_i]
&\leq\sum_{u,v:u,v\neq i}\E[(\xi_i\cdot\xi_u)^2\mid\xi_i]\E[(\xi_i\cdot\xi_v)^2\mid\xi_i]\\
&=\left(\sum_{u:u\neq i}\E[(\xi_i\cdot\xi_u)^2\mid\xi_i]\right)^2
\leq(\xi_i^\top\Sigma_W\xi_i)^2.
}
This completes the proof. 
\end{proof}

We will also use the following sharp anti-concentration inequality for the squared norm of a Gaussian vector established in \cite{GNSU19}. 
Recall $\Lambda_2(\Sigma)$ and $\varkappa(\Sigma)$ above the statement of Theorem \ref{BALL2}.

\begin{lemma}[Theorem 2.7 of \cite{GNSU19}]\label{anti-ball2}
If $\Lambda_2(\Sigma)>0$, then
\[
\sup_{a\geq0}\P(a\leq |Z+\mu|^2\leq a+\eps)\leq C\varkappa(\Sigma)\eps
\]
for any $\mu\in\mathbb R^d$ and $\eps>0$.  
\end{lemma}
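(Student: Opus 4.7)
The goal is to establish the anti-concentration bound $\sup_{a\geq 0}\P(a\leq|Z+\mu|^2\leq a+\eps)\leq C\varkappa(\Sigma)\eps$ with $\varkappa(\Sigma)=(\Lambda_1(\Sigma)\Lambda_2(\Sigma))^{-1/2}$. By rotational invariance of the Gaussian law and of the spectral quantities $\Lambda_k(\Sigma)$, I would first diagonalize $\Sigma$ so that, setting $\lambda_1\geq\cdots\geq\lambda_d\geq0$, the components $Z_j\sim N(0,\lambda_j)$ are independent and $|Z+\mu|^2=\sum_{j=1}^d(Z_j+\mu_j)^2$. The claim then reduces to showing that the density $f$ of this sum satisfies $\|f\|_\infty\leq C\varkappa(\Sigma)$, since $\P(a\leq|Z+\mu|^2\leq a+\eps)\leq\|f\|_\infty\eps$.

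I would bound $\|f\|_\infty$ via Fourier inversion. A standard Gaussian integration (completing the square in each coordinate) yields the characteristic function $\phi(t)=\E[e^{it|Z+\mu|^2}]=\prod_j(1-2it\lambda_j)^{-1/2}\exp(it\mu_j^2/(1-2it\lambda_j))$, whose modulus is bounded by $\prod_j(1+4t^2\lambda_j^2)^{-1/4}$. The naive inversion bound $\|f\|_\infty\leq(2\pi)^{-1}\int|\phi|\,dt$ can diverge (for example, when only two eigenvalues are positive, the integrand decays only like $|t|^{-1}$), so I would instead majorize $\mathbf{1}_{[a,a+\eps]}$ by a smooth nonnegative function $g_\eps$ with $\int g_\eps$ of order $\eps$ and rapid Fourier decay, say $|\hat g_\eps(t)|\leq C\eps(1+\eps|t|)^{-2}$, and estimate $\P(a\leq|Z+\mu|^2\leq a+\eps)\leq\E g_\eps(|Z+\mu|^2)\leq(2\pi)^{-1}\int|\hat g_\eps(t)||\phi(t)|\,dt$. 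I would then split the $t$-axis into three regimes at the thresholds $1/\lambda_1$ and $1/\lambda_2$: for $|t|\leq C/\lambda_1$, using $\log(1+x)\geq cx$ for bounded $x$ gives the Gaussian-tail bound $|\phi(t)|\leq\exp(-ct^2\Lambda_1^2)$, contributing a term of order $\eps/\Lambda_1$; for $|t|\geq C/\lambda_2$, the two-factor product bound $|\phi(t)|\leq(4t^2\lambda_1\lambda_2)^{-1/2}$ combined with the decay of $\hat g_\eps$ gives an integrable tail.

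The main obstacle is the intermediate regime $C/\lambda_1\leq|t|\leq C/\lambda_2$, where a naive two-factor estimate produces only $\|f\|_\infty\leq C/\sqrt{\lambda_1\lambda_2}$. This is much larger than the target $(\Lambda_1\Lambda_2)^{-1/2}$ whenever many eigenvalues are comparable: for instance, in the isotropic case $\lambda_1=\cdots=\lambda_d=\lambda$ the two bounds differ by a factor of $\sqrt d$. The remedy is to exploit the collective decay from the subleading eigenvalues by refining the analysis of $\sum_j\log(1+4t^2\lambda_j^2)$ in this band, for instance by partitioning the eigenvalues into dyadic blocks or by applying $\log(1+x)\geq x/(1+x)$ to extract a lower bound involving the full Hilbert--Schmidt quantity $\Lambda_2$. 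Combining the three regime contributions and optimizing the smoothing parameter $\eps$ inside $\hat g_\eps$ against the spectrum yields the desired density bound. An alternative path would use the coarea formula $f(x)=(2\sqrt x)^{-1}\int_{\{|z+\mu|^2=x\}}p_\Sigma(z)\,dS$ together with a direct estimate of the Gaussian surface measure on the level spheres, converting spectral information on $\Sigma$ into a bound on the level-set average; however, this route appears to require comparable spectral bookkeeping to extract the precise factor $\varkappa(\Sigma)$ rather than the weaker $1/\sqrt{\lambda_1\lambda_2}$.
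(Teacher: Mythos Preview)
The paper does not prove this lemma; it is quoted verbatim as Theorem~2.7 of G\"otze, Naumov, Spokoiny and Ulyanov (2019) and used as a black box in the proof of Theorem~\ref{BALL2}. There is therefore no ``paper's own proof'' to compare against.

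Your sketch follows the same characteristic-function route that GNSU19 themselves use: diagonalize, write $|Z+\mu|^2$ as a weighted sum of independent noncentral $\chi^2_1$ variables, bound the modulus of the characteristic function by $\prod_j(1+4t^2\lambda_j^2)^{-1/4}$, and integrate. Your identification of the difficulty is accurate: the two-eigenvalue bound $|\phi(t)|\leq(4t^2\lambda_1\lambda_2)^{-1/2}$ only yields $C/\sqrt{\lambda_1\lambda_2}$, not $C/\sqrt{\Lambda_1\Lambda_2}$, and the whole content of the result lies in extracting the Hilbert--Schmidt quantities $\Lambda_1,\Lambda_2$ from the full product.

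However, your proposal does not actually close this gap. You write that ``the remedy is to exploit the collective decay from the subleading eigenvalues by refining the analysis of $\sum_j\log(1+4t^2\lambda_j^2)$'' via dyadic blocks or the inequality $\log(1+x)\geq x/(1+x)$, but neither of these suggestions is carried out, and neither by itself is enough. The inequality $\log(1+x)\geq x/(1+x)$ gives $\sum_j\log(1+4t^2\lambda_j^2)\geq 4t^2\Lambda_1^2/(1+4t^2\lambda_1^2)$, which recovers only the Gaussian regime near $t=0$ and degenerates for large $t$; it does not produce the factor $\Lambda_2$. What GNSU19 actually do is separate the leading eigenvalue, obtain a bound of the form $|\phi(t)|\leq(1+4t^2\lambda_1^2)^{-1/4}\exp(-ct^2\Lambda_2^2/(1+4t^2\lambda_2^2))$ for the remaining product, and then integrate this explicitly, which requires a somewhat delicate case analysis. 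Your proposal names the right obstacle but stops short of the argument that overcomes it; as written it is a plan rather than a proof.
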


We turn to the main body of the proof. 
First, without loss of generality, we may assume $\xi_1,\dots,\xi_n$ and $Z$ are independent. 
Fix a $C^\infty$ function $g:\mathbb{R}\to[0,1]$ such that $g(x)=1$ for $x\leq0$ and $g(x)=0$ for $x\geq1$. For any $a\in\mathbb R$ and $\eps>0$, we define the function $h_{a,\eps}:\mathbb{R}^d\to[0,1]$ by 
\[
h_{a,\eps}(x)=g(\eps^{-1}(|x|^2-a)),\qquad x\in\mathbb R^d.
\]
Then we have
\ba{
\P(|W|\leq r)
&\leq\E h_{r^2,\eps}(W)
\leq|\E h_{r^2,\eps}(W)-\E h_{r^2,\eps}(Z)|
+\E h_{r^2,\eps}(Z)\\
&\leq|\E h_{r^2,\eps}(W)-\E h_{r^2,\eps}(Z)|
+\P(|Z|^2\leq r^2+\eps)
}
and
\ba{
\P(|W|\leq r)
&=\P(|W|^2-r^2\leq0)
\geq\E h_{r^2-\eps,\eps}(W)\\
&\geq-|\E h_{r^2-\eps,\eps}(W)-\E h_{r^2-\eps,\eps}(Z)|
+\E h_{r^2-\eps,\eps}(Z)\\
&\geq-|\E h_{r^2-\eps,\eps}(W)-\E h_{r^2-\eps,\eps}(Z)|
+\P(|Z|^2\leq r^2-\eps).
}
Thus we obtain
\ba{
\sup_{r\geq0}|\P(|W|\leq r)-\P(|Z|\leq r)|
\leq\sup_{a\in\mathbb R}|\E h_{a,\eps}(W)-\E h_{a,\eps}(Z)|
+\sup_{a\geq0}\P(a<|Z|^2\leq a+\eps).
}
Applying Lemma \ref{anti-ball2} to the second term on the right hand side, we infer that
\ben{\label{eq:smoothing}
\sup_{r\geq0}|\P(|W|\leq r)-\P(|Z|\leq r)|
\leq\sup_{a\in\mathbb R}|\E h_{a,\eps}(W)-\E h_{a,\eps}(Z)|
+C\varkappa(\Sigma)\eps.
}

Fix $a\in\mathbb R$ and $\eps>0$, write $h=h_{a,\eps}$ and proceed to bound $|\E h(W)-\E h(Z)|$. To accomplish this, we decompose $\E h(W)-\E h(Z)$ as
\[
\E h(W)-\E h(Z)=\{\E h(W)-\E h(Z_W)\}+\{\E h(Z_W)-\E h(Z)\},
\]
where $Z_W$ is a centered Gaussian vector in $\mathbb R^d$ with covariance matrix $\Sigma_W$ which is independent of everything else. We will show the following bounds:
\ban{
|\E h(W)-\E h(Z_W)|&\leq C\{\eps^{-3}\delta_1(W)+\eps^{-2}\delta_2(W)\},\label{ball2-bound}\\
|\E h(Z_W)-\E h(Z)|&\leq C\{\eps^{-2}\delta_0(\Sigma_W,\Sigma)
+\eps^{-1}\delta_0'(\Sigma_W,\Sigma)\}.\label{ball2-bound-g}
}
Then, inserting these bounds into \eqref{eq:smoothing}, we obtain
\bm{
\sup_{r\geq0}|\P(|W|\leq r)-\P(|Z|\leq r)|\\
\leq C\{\eps^{-3}\delta_1(W)+\eps^{-2}\delta_2(W)
+\eps^{-2}\delta_0(\Sigma_W,\Sigma)
+\eps^{-1}\delta_0'(\Sigma_W,\Sigma)
+\varkappa(\Sigma)\eps\}.
}
The desired result then follows by setting
\[
\eps=(\delta_1(W)/\varkappa(\Sigma))^{1/4}+(\delta_2(W)/\varkappa(\Sigma))^{1/3}
+(\delta_0(\Sigma_W,\Sigma)/\varkappa(\Sigma))^{1/3}+(\delta'_0(\Sigma_W,\Sigma)/\varkappa(\Sigma))^{1/2}.
\]

Now we prove \eqref{ball2-bound} at first. 
We consider the Stein equation associated with $h$ and covariance matrix $\Sigma_W$:
\ben{   \label{n03c}
  \langle\Hess f(w),\Sigma_W\rangle_{H.S.}-w\cdot \nabla f(w)=h(w)-\E h(Z_W),\qquad w\in \IR^d.
}
It can be verified directly 
that (cf. \eq{sol})
\be{
f(w)=\int_0^1-\frac{1}{2(1-s)}\E[h(\sqrt{1-s}w+\sqrt{s}Z_W)-\E h (Z_W)]ds
}
is a solution to \eq{n03c}. 
Hence we have
\ben{\label{stein-eq}
\E h(W)-\E h(Z_W)=\E\langle\Hess f(W),\Sigma_W\rangle_{H.S.}-\E [W\cdot \nabla f(W)].
}
Also, we have by differentiation under the integral sign
\besn{\label{n10c}
  \nabla^2 f (w) & =
  \int_0^1 -\frac{1}{2}\E[\nabla^2 h (\sqrt{1-s}w+\sqrt{s}Z_W)]ds.
}
To evaluate the right hand side of \eqref{stein-eq}, we employ Stein's leave-one-out trick. 
First, we have
\ba{
\E [W\cdot \nabla f(W)]
=\sum_{i=1}^n\sum_{j=1}^d\E[\xi_{ij}\partial_jf(W)].
}
Taylor expanding $\partial_jf(W)$ around $W^{(i)}$, we obtain
\ba{
\E [W\cdot \nabla f(W)]
=\sum_{i=1}^n\sum_{j=1}^d\E[\xi_{ij}\partial_jf(W^{(i)})]
+\sum_{i=1}^n\sum_{j,k=1}^d\E[\xi_{ij}\xi_{ik}\partial_{jk}f(W^{(i)})]
+R_1,
}
where
\[
R_1=\sum_{i=1}^n\sum_{j,k=1}^d\E[\xi_{ij}\xi_{ik}\{\partial_{jk}f(W^{(i)}+U\xi_i)-\partial_{jk}f(W^{(i)})\}]
\]
and $U$ is a uniform random variable on $[0,1]$ independent of everything else. 
Since $\xi_i$ and $W^{(i)}$ are independent and $\E\xi_i=0$, we deduce
\ben{\label{loo-1}
\E [W\cdot \nabla f(W)]
=\sum_{i=1}^n\sum_{j,k=1}^d\E[\xi_{ij}\xi_{ik}]\E[\partial_{jk}f(W^{(i)})]
+R_1.
}
Next, since $\Sigma_W=\E[WW^\top]$, we have
\ban{
\E\langle\Hess f(W),\Sigma_W\rangle_{H.S.}
&=\sum_{j,k=1}^d\E[W_jW_k]\E[\partial_{jk}f(W)]
=\sum_{i=1}^n\sum_{j,k=1}^d\E[\xi_{ij}\xi_{ik}]\E[\partial_{jk}f(W)]
\nonumber\\
&=\sum_{i=1}^n\sum_{j,k=1}^d\E[\xi_{ij}\xi_{ik}]\E[\partial_{jk}f(W^{(i)})]
+R_2,\label{loo-2}
}
where
\[
R_2=\sum_{i=1}^n\sum_{j,k=1}^d\E[\xi_{ij}\xi_{ik}]\E[\partial_{jk}f(W)-\partial_{jk}f(W^{(i)})].
\]
Combining \eqref{stein-eq}, \eqref{loo-1} and \eqref{loo-2}, we conclude
\ben{\label{basic-bound}
|\E h(W)-\E h(Z_W)|\leq |R_1|+|R_2|.
}
Now we bound $R_1$ and $R_2$. 
By definition we have
\ben{\label{h-deriv2}
\partial_{jk} h(w)=4\eps^{-2}g''(\eps^{-1}(|w|^2-a))w_jw_k+2\eps^{-1}g'(\eps^{-1}(|w|^2-a))\delta_{jk}.
}
Hence we obtain from \eqref{n10c}
\ban{
R_1&=-\sum_{i=1}^n\int_0^{1} \frac{1}{2}\sum_{j,k=1}^d\E\Big\{ \xi_{ij}\xi_{ik}\Big[ 4\eps^{-2}g''(\eps^{-1}(| W^{(i),s}+\tilde\xi^s_i|^2-a))(W^{(i),s}_j+\tilde\xi^s_{ij})(W^{(i),s}_k+\tilde\xi^s_{ik})
\nonumber\\
&\hphantom{=-\sum_{i=1}^n\int_0^{1} \frac{1}{2}\sum_{j,k,l=1}^d\E[}
-4\eps^{-2}g''(\eps^{-1}(|W^{(i),s}|^2-a))W^{(i),s}_jW^{(i),s}_k
\nonumber\\
&\hphantom{=-\sum_{i=1}^n\int_0^{1} \frac{1}{2}\sum_{j,k,l=1}^d\E[}
+2\eps^{-1}g'(\eps^{-1}(|W^{(i),s}+\tilde\xi^s_i|^2-a))\delta_{jk}
\nonumber\\
&\hphantom{=-\sum_{i=1}^n\int_0^{1} \frac{1}{2}\sum_{j,k,l=1}^d\E[}
-2\eps^{-1}g'(\eps^{-1}(|W^{(i),s}|^2-a))\delta_{jk} \Big] \Big\}ds
\nonumber\\
&=:R_{11}+R_{12}+R_{13}+R_{14},
\label{r1-decomp}
}
where $W^{(i),s}:=\sqrt{1-s}W^{(i)}+\sqrt sZ_W$, $\tilde\xi^s_i:=\sqrt{1-s}U\xi_i$ and
\ba{
R_{11}&:=-2\eps^{-2}\sum_{i=1}^n\int_0^{1}(1-s)\sum_{j,k=1}^d\E[U^2\xi_{ij}^2\xi_{ik}^2g''(\eps^{-1}(| W^{(i),s}+\tilde\xi^s_i|^2-a))]ds,\\
R_{12}&:=-4\eps^{-2}\sum_{i=1}^n\int_0^{1}\sqrt{1-s}\sum_{j,k=1}^d\E[U\xi_{ij}^2\xi_{ik}W^{(i),s}_kg''(\eps^{-1}(| W^{(i),s}+\tilde\xi^s_i|^2-a))]ds,\\
R_{13}&:=-2\eps^{-2}\sum_{i=1}^n\int_0^{1}\sum_{j,k=1}^d\E[\xi_{ij}\xi_{ik}W^{(i),s}_jW^{(i),s}_k\{g''(\eps^{-1}(| W^{(i),s}+\tilde\xi^s_i|^2-a))-g''(\eps^{-1}(| W^{(i),s}|^2-a))\}]ds,\\
R_{14}&:=-\eps^{-1}\sum_{i=1}^n\int_0^{1}\sum_{j=1}^d\E[\xi_{ij}^2\{g'(\eps^{-1}(| W^{(i),s}+\tilde\xi^s_i|^2-a))-g'(\eps^{-1}(| W^{(i),s}|^2-a))\}]ds.
} 
We have
\ben{\label{r11-bound}
|R_{11}|
\leq C\eps^{-2}\sum_{i=1}^n\E[|\xi_i|^4]
}
and
\ba{
|R_{12}|
\leq C\eps^{-2}\sum_{i=1}^n\E[|\xi_i|^2(|\xi_{i}\cdot W^{(i)}|+|\xi_i\cdot Z_W|)].
}
Meanwhile, using the fundamental theorem of calculus, we can rewrite $R_{13}$ and $R_{14}$ as
\ba{
R_{13}=-4\eps^{-3}\sum_{i=1}^n\int_0^{1}\E[(\xi_{i}\cdot W^{(i),s})^2(W^{(i),s}+U'\tilde\xi^s_i)\cdot \tilde\xi^s_i g^{(3)}(\eps^{-1}(| W^{(i),s}+U'\tilde\xi^s_i|^2-a))]ds
}
and
\ba{
R_{14}=-2\eps^{-2}\sum_{i=1}^n\int_0^{1}\E[|\xi_{i}|^2(W^{(i),s}+U'\tilde\xi^s_i)\cdot \tilde\xi^s_i g''(\eps^{-1}(| W^{(i),s}+U'\tilde\xi^s_i|^2-a))]ds,
}
where $U'$ is a uniform random variable on $[0,1]$ independent of everything else. 
Hence we obtain
\ba{
|R_{13}|
&\leq C\eps^{-3}\sum_{i=1}^n\int_0^{1}\E[(\xi_{i}\cdot W^{(i),s})^2(|W^{(i),s}\cdot\xi_i|+|\xi_i|^2)]ds\\
&\leq C\eps^{-3}\sum_{i=1}^n\E[|\xi_{i}\cdot W^{(i)}|^3+|\xi_{i}\cdot Z_W|^3
+(\xi_{i}\cdot W^{(i)})^2|\xi_i|^2+(\xi_{i}\cdot Z_W)^2|\xi_i|^2]
}
and
\ba{
|R_{14}|
&\leq C\eps^{-2}\sum_{i=1}^n\int_0^{1}\E[|\xi_{i}|^2(|W^{(i),s}\cdot\xi_i|+|\xi_i|^2)]ds\\
&\leq C\eps^{-2}\sum_{i=1}^n\E[|\xi_{i}|^2|\xi_{i}\cdot W^{(i)}|+|\xi_i|^2|\xi_{i}\cdot Z_W|+|\xi_i|^4].
}
The Schwarz inequality and Lemma \ref{portnoy} imply that
\ba{
\E[|\xi_{i}\cdot W^{(i)}|^3\mid\xi_i]
&=\E[|\xi_{i}\cdot W^{(i)}|\cdot|\xi_{i}\cdot W^{(i)}|^2\mid\xi_i]\\
&\leq\sqrt{\E[|\xi_{i}\cdot W^{(i)}|^2\mid\xi_i]\E[|\xi_{i}\cdot W^{(i)}|^4\mid\xi_i]}\\
&\leq\sqrt{(\xi_i^\top\Sigma_W\xi_i)\left(|\xi_i|^4\sum_{u=1}^n\E[|\xi_u|^4]+3(\xi_i^\top\Sigma_W\xi_i)^2\right)}\\
&\leq\sqrt{\xi_i^\top\Sigma_W\xi_i}|\xi_i|^2\sqrt{\sum_{u=1}^n\E[|\xi_u|^4]}+\sqrt{3}(\xi_i^\top\Sigma_W\xi_i)^{3/2}.
}
Hence we have
\[
\sum_{i=1}^n\E[|\xi_{i}\cdot W^{(i)}|^3]
\leq\sum_{i=1}^n\E\left[\sqrt{\xi_i^\top\Sigma_W\xi_i}|\xi_i|^2\right]\sqrt{\sum_{u=1}^n\E[|\xi_u|^4]}+\sqrt{3}\sum_{i=1}^n\E[(\xi_i^\top\Sigma_W\xi_i)^{3/2}].
\]
The Schwarz inequality yields
\[
\sum_{i=1}^n\E\left[\sqrt{\xi_i^\top\Sigma_W\xi_i}|\xi_i|^2\right]
\leq\sqrt{\sum_{i=1}^n\E\left[\xi_i^\top\Sigma_W\xi_i\right]\sum_{i=1}^n\E[|\xi_i|^4]}.
\]
Since 
\ba{
\sum_{i=1}^n\E\left[\xi_i^\top\Sigma_W\xi_i\right]
&=\sum_{i=1}^n\E\left[\tr(\xi_i^\top\Sigma_W\xi_i)\right]
=\sum_{i=1}^n\E\left[\tr(\Sigma_W\xi_i\xi_i^\top)\right]\\
&=\tr\left(\Sigma_W\sum_{i=1}^n\E[\xi_i\xi_i^\top]\right)
=\tr(\Sigma_W^2)=\|\Sigma_W\|_{H.S.}^2,
}
we conclude that
\[
\sum_{i=1}^n\E[|\xi_{i}\cdot W^{(i)}|^3]
\leq\|\Sigma_W\|_{H.S.}\sum_{i=1}^n\E[|\xi_i|^4]+\sqrt{3}\sum_{i=1}^n\E[(\xi_i^\top\Sigma_W\xi_i)^{3/2}].
\]
The Schwarz inequality and Lemma \ref{portnoy} also imply that
\ba{
\E[|\xi_i|^2|\xi_{i}\cdot W^{(i)}|]
\leq\E\left[|\xi_i|^2\sqrt{\E[|\xi_{i}\cdot W^{(i)}|^2\mid\xi_i]}\right]
\leq\E\left[|\xi_i|^2\sqrt{\xi_i^\top\Sigma_W\xi_i}\right]
}
and
\ba{
\E[|\xi_{i}\cdot W^{(i)}|^2|\xi_i|^2]
\leq\E[(\xi_i^\top\Sigma_W\xi_i)|\xi_i|^2].
}
In addition, conditional on $\xi_i$, $\xi_i\cdot Z_W$ follows the normal distribution with mean 0 and variance $\xi_i^\top\Sigma_W\xi_i$. Hence we obtain
\[
\E[|\xi_i\cdot Z_W|^3]\leq C\E[(\xi_i^\top\Sigma_W\xi_i)^{3/2}],\qquad
\E[(\xi_i\cdot Z_W)^2|\xi_i|^2]\leq C\E[(\xi_i^\top\Sigma_W\xi_i)|\xi_i|^2]
\]
and
\[
\E[|\xi_i|^2|\xi_i\cdot Z_W|]\leq C\E\left[|\xi_i|^2\sqrt{\xi_i^\top\Sigma_W\xi_i}\right].
\]
Consequently, we deduce
\ben{\label{r12-bound}
|R_{12}|
\leq C\eps^{-2}\sum_{i=1}^n\E\left[|\xi_i|^2\sqrt{\xi_i^\top\Sigma_W\xi_i}\right]
}
and
\ben{\label{r13-bound}
|R_{13}|\leq C\eps^{-3}\left\{\|\Sigma_W\|_{H.S.}\sum_{i=1}^n\E[|\xi_i|^4]
+\sum_{i=1}^n\E[(\xi_i^\top\Sigma_W\xi_i)^{3/2}]
+\sum_{i=1}^n\E[(\xi_i^\top\Sigma_W\xi_i)|\xi_i|^2]\right\}
}
and
\ben{\label{r14-bound}
|R_{14}|
\leq C\eps^{-2}\sum_{i=1}^n\E\left[|\xi_i|^2\sqrt{\xi_i^\top\Sigma_W\xi_i}+|\xi_i|^4\right].
}
Note that $\xi_i^\top\Sigma_W\xi_i\leq\|\Sigma_W\|_{op}|\xi_i|^2$ and $\|\Sigma_W\|_{op}\leq\|\Sigma_W\|_{H.S.}$. 
Therefore, we deduce from \eqref{r1-decomp}--\eqref{r14-bound} that
\ben{\label{r1-bound}
|R_{1}|\leq C(\eps^{-3}\delta_1(W)+\eps^{-2}\delta_2(W)).
}
Besides, note that we can rewrite $R_2$ as
\[
R_2=\sum_{i=1}^n\sum_{j,k=1}^d\E[\xi_{ij}'\xi_{ik}'\{\partial_{jk}f(W^{(i)}+\xi_i)-\partial_{jk}f(W^{(i)})\}],
\]
where $(\xi_i')_{i=1}^n$ is an independent copy of $(\xi_i)_{i=1}^n$. Hence, we can prove by a similar argument to the above
\ben{\label{r2-bound}
|R_2|\leq C(\eps^{-3}\delta_1(W)+\eps^{-2}\delta_2(W)).
}
Combining \eqref{basic-bound}, \eqref{r1-bound} and \eqref{r2-bound}, we obtain \eqref{ball2-bound}.

Next we prove \eqref{ball2-bound-g}. From \eqref{n03c} we have
\be{
\E h(Z)-\E h(Z_W)=\E[\langle\Hess f(Z),\Sigma_W\rangle_{H.S.}]-\E [Z\cdot \nabla f(Z)].
}
The multivariate Stein identity yields
\ba{
\E [Z\cdot \nabla f(Z)]=\E\langle\Hess f(Z),\Sigma\rangle_{H.S.}.
}
So we obtain
\ben{\label{basic-bound-g}
|\E h(Z_W)-\E h(Z)|\leq |\E\langle\Hess f(Z),\Sigma-\Sigma_W\rangle_{H.S.}|.
}
We have by \eqref{n10c} and \eqref{h-deriv2}
\ba{
&\E\langle\Hess f(Z),\Sigma-\Sigma_W\rangle_{H.S.}\\
&=-2\eps^{-2}\int_0^1 \sum_{j,k=1}^d\E[g''(\eps^{-1}(|Z(s)|^2-a))Z(s)_jZ(s)_k(\Sigma_{jk}-\Sigma_{W,jk})]ds\\
&\quad-\eps^{-1}\int_0^1 \sum_{j=1}^d\E[g'(\eps^{-1}(|Z(s)|^2-a))(\Sigma_{jj}-\Sigma_{W,jj})]ds\\
&=:R'_1+R'_2,
}
where $Z(s):=\sqrt{1-s}Z+\sqrt s Z_W$. 
We can bound $R'_2$ as
\ben{\label{r2-bound-g}
|R'_2|\leq C \eps^{-1}\sum_{j=1}^d|\Sigma_{jj}-\Sigma_{W,jj}|.
}
Meanwhile, we can rewrite $R'_1$ as
\ba{
R'_1=-2\eps^{-2}\int_0^1 \E[g''(\eps^{-1}(|Z(s)|^2-a))Z(s)^\top(\Sigma-\Sigma_W)Z(s)]ds.
}
So we obtain by the Schwarz inequality
\ba{
|R'_1|&\leq C\eps^{-2}\int_0^1 \sqrt{\E[|Z(s)|^2]\E[|(\Sigma-\Sigma_W)Z(s)|^2]}ds.
}
We have
\ba{
\E[|Z(s)|^2]
=\tr(\Var(Z(s)))=(1-s)\tr(\Sigma)+s\tr(\Sigma_W)
}
and
\ba{
\E[|(\Sigma-\Sigma_W)Z(s)|^2]
&=\tr((\Sigma-\Sigma_W)\Var(Z(s))(\Sigma-\Sigma_W))\\
&=\|(\Sigma-\Sigma_W)[\Var(Z(s))]^{1/2}\|_{H.S.}^2\\
&\leq\|\Var(Z(s))\|_{op}\|\Sigma-\Sigma_W\|_{H.S.}^2\\
&\leq(\|\Sigma_W\|_{op}+\|\Sigma\|_{op})\|\Sigma-\Sigma_W\|_{H.S.}^2.
}
Thus we obtain
\ben{\label{r1-bound-g}
|R'_1|\leq C\eps^{-2}\sqrt{(\tr(\Sigma_W)+\tr(\Sigma))(\|\Sigma_W\|_{op}+\|\Sigma\|_{op})}\|\Sigma-\Sigma_W\|_{H.S.}.
}
Combining \eqref{basic-bound-g}, \eqref{r2-bound-g} and \eqref{r1-bound-g}, we obtain \eqref{ball2-bound-g}. Thus we complete the proof. 

\subsection{Proof of Proposition \ref{prop1}}

Since $(|Z|^2-d)/\sqrt{2d}$ converges in law to $N(0,1)$ as $d\to\infty$, by \eq{n41}, $(|W|^2-d)/\sqrt{2d}$ also converges in law to $N(0,1)$. Since $W$ has the same law as $\sqrt{V}Z'$ by assumption, where $V:=n^{-1}\sum_{i=1}^ne_i^2$ and $Z'\sim N(0,I_d)$ is independent of $\{e_i\}_{i=1}^\infty$, $(V|Z'|^2-d)/\sqrt{2d}$ should also converge in law to $N(0,1)$. Since
\[
\frac{V|Z'|^2-d}{\sqrt{2d}}
=V\frac{|Z'|^2-d}{\sqrt{2d}}+\sqrt{\frac{d}{2}}(V-1)
=(V-1)\frac{|Z'|^2-d}{\sqrt{2d}}+\frac{|Z'|^2-d}{\sqrt{2d}}+\sqrt{\frac{d}{2}}(V-1)
\]
and the first term converges to 0 in probability, 
\[
\frac{|Z'|^2-d}{\sqrt{2d}}+\sqrt{\frac{d}{2}}(V-1)
\]
must converge in law to $N(0,1)$. In the above expression, the first term converges in law to $N(0,1)$ and the first and second terms are independent, so this implies $\sqrt{d}(V-1)=o_p(1)$ as $n\to\infty$. Since $\sqrt{n}(V-1)$ converges in law to $N(0,\Var(e_1^2))$, we must have $d/n\to0$.

\subsection{Proof of Theorem \ref{efron} and Corollary \ref{coro-ef}}\label{proof:efron}

First we prove Theorem \ref{efron}. Without loss of generality, we may assume $\Delta_n^*\leq1$. This particularly implies
\ben{\label{ef-wlog}
\varkappa^3(\Sigma)\delta_1(W)\leq1.
}
Conditional on $X$, $X_1^*-\bar{X},\dots,X_n^*-\bar{X}$ are i.i.d.~with mean 0 and covariance matrix $\wh{\Sigma}$. Therefore, applying Theorem \ref{BALL2} conditional on $X$, we obtain
\bmn{\label{ef1}
\sup_{r\geq0}|\P(|W^*|\leq r\mid X)-\P(|Z|\leq r)|
\leq C(\varkappa^{3/4}(\Sigma)(\delta^*_1)^{1/4}+\varkappa^{2/3}(\Sigma)(\delta^*_2)^{1/3}\\
+\varkappa^{2/3}(\Sigma)\delta_0^{1/3}(\wh{\Sigma},\Sigma)+\varkappa^{1/2}(\Sigma)\delta_0'^{1/2}(\wh\Sigma,\Sigma)),
}
where, with $\wt X_i:=X_i-\bar X$,
\ba{
\delta_1^*&:=\frac{\|\wh\Sigma\|_{H.S.}}{n^2}\sum_{i=1}^n|\wt X_i|^4
+\frac{\|\wh\Sigma\|_{op}^{3/2}}{n^{3/2}}\sum_{i=1}^n|\wt X_i|^{3},\\
\delta_2^*&:=\frac{\|\wh\Sigma\|_{op}^{1/2}}{n^{3/2}}\sum_{i=1}^n|\wt X_i|^{3}
+\frac{1}{n^{2}}\sum_{i=1}^n|\wt X_i|^4.
} 
Set $\bar\Sigma:=n^{-1}\sum_{i=1}^nX_iX_i^\top$. 
Since $\wh{\Sigma}=\bar\Sigma-\bar{X}\bar{X}^\top$, we have
\ba{
\E\tr(\wh\Sigma)
=\tr(\E[\wh\Sigma])
=(1-1/n)\tr(\Sigma),
}
\ben{\label{hs-bound0}
\|\wh{\Sigma}-\Sigma\|_{H.S.}
\leq\|\bar{\Sigma}-\Sigma\|_{H.S.}+\|\bar{X}\bar{X}^\top\|_{H.S.}
}
and
\ba{
\sum_{j=1}^d|\Sigma_{jj}-\wh{\Sigma}_{jj}|
&\leq\sum_{j=1}^d|\Sigma_{jj}-\bar{\Sigma}_{jj}|+\sum_{j=1}^d\bar X_j^2.
}
For any $j,k=1,\dots,d$, it holds that
\ba{
\E[|\bar\Sigma_{jk}-\Sigma_{jk}|^2]
&=\Var\left[\frac{1}{n}\sum_{i=1}^nX_{ij}X_{ik}\right]
\leq\frac{1}{n^2}\sum_{i=1}^n\E[X_{ij}^2X_{ik}^2].
}
Hence we have
\ben{\label{hs-bound}
\E\|\bar\Sigma-\Sigma\|_{H.S.}
\leq\sqrt{\frac{1}{n^2}\sum_{i=1}^n\E[|X_{i}|^4]}
}
and
\ba{
\E\left[\sum_{j=1}^d|\Sigma_{jj}-\bar{\Sigma}_{jj}|\right]
\leq\sum_{j=1}^d\sqrt{\frac{1}{n^2}\sum_{i=1}^n\E [X_{ij}^4]}.
}
Further, we have by the Jensen inequality
\ben{\label{hs-bound2}
\E\|\bar X\bar X^\top\|_{H.S.}=\E[|\bar X|^2]
=\frac{1}{n^2}\sum_{i=1}^n\E[|X_i|^2]
\leq\frac{1}{n}\sqrt{\frac{1}{n}\sum_{i=1}^n\E[|X_i|^4]}
}
and
\ba{
\sum_{j=1}^d\E[\bar X_j^2]
=\frac{1}{n^2}\sum_{j=1}^d\sum_{i=1}^n\E [X_{ij}^2]
\leq\frac{1}{n}\sum_{j=1}^d\sqrt{\frac{1}{n}\sum_{i=1}^n\E [X_{ij}^4]}.
}
Consequently, we obtain, using H\"older's inequality,
\ban{
\E[\delta_0^{1/3}(\wh{\Sigma},\Sigma)]
&\leq(\E\tr(\wh\Sigma)+\tr(\Sigma))^{1/6}(\E\|\wh\Sigma\|_{op}+\|\Sigma\|_{op})^{1/6}(\E\|\Sigma-\wh\Sigma\|_{H.S.})^{1/3}
\nonumber\\
&\leq C[\tr(\Sigma)]^{1/6}(\wh\delta+\|\Sigma\|_{op})^{1/6}\left(\frac{1}{n^2}\sum_{i=1}^n\E[| X_i|^4]\right)^{1/6}
\label{ef-delta0}
}
and
\ben{\label{ef-delta0p}
\E[\delta'^{1/2}_0(\wh{\Sigma},\Sigma)]\leq C\left(\sum_{j=1}^d\sqrt{\frac{1}{n^2}\sum_{i=1}^n\E [X_{ij}^4]}\right)^{1/2}.
}
Meanwhile, for any $p\geq2$, we have by the Jensen inequality
\ba{
|\bar{X}|^p
=\left\{\sum_{j=1}^d\left(\frac{1}{n}\sum_{i=1}^nX_{ij}\right)^2\right\}^{p/2}
\leq\left\{\sum_{j=1}^d\frac{1}{n}\sum_{i=1}^nX_{ij}^2\right\}^{p/2}
\leq\frac{1}{n}\sum_{i=1}^n|X_{i}|^p.
}
Hence we obtain
\ba{
\E\left[\sum_{i=1}^n|\wt X_i|^p\right]
\leq2^{p-1}\left(\sum_{i=1}^n\E[|X_i|^p]+n\E[|\bar{X}|^p]\right)
\leq2^p\sum_{i=1}^n\E[|X_i|^p].
}
Combining these bounds with \eqref{hs-bound} and \eqref{hs-bound2}, and using H\"older's inequality, we deduce
\ban{
\E[(\delta_1^*)^{1/4}]
&\leq C\left\{\frac{\E\|\wh\Sigma\|_{H.S.}}{n^2}\sum_{i=1}^n\E[| X_i|^4]
+\frac{(\E\|\wh\Sigma\|_{op})^{3/2}}{n^{3/2}}\sum_{i=1}^n\E[|X_i|^{3}]\right\}^{1/4}
\nonumber\\
&\leq C\left[\delta_1^{1/4}(W)
+\left(\frac{1}{n^2}\sum_{i=1}^n\E[| X_i|^4]\right)^{3/8}
+\left\{\frac{\wh\delta^{3/2}}{n^{3/2}}\sum_{i=1}^n\E[|X_i|^{3}]\right\}^{1/4}
\right]
\label{ef-delta1}
}
and
\ban{
\E[(\delta^*_2)^{1/3}]
&\leq C\left\{\frac{(\E\|\wh\Sigma\|_{op})^{1/2}}{n^{3/2}}\sum_{i=1}^n\E[|X_i|^{3}]
+\frac{1}{n^{2}}\sum_{i=1}^n\E[|X_i|^4]\right\}^{1/3}
\nonumber\\
&\leq C\left[\delta_2^{1/3}(W)
+\left\{\frac{\wh\delta^{1/2}}{n^{3/2}}\sum_{i=1}^n\E[|X_i|^{3}]\right\}^{1/3}\right].
\label{ef-delta2}
}
Note that we have
\ba{
\varkappa^{3/4}(\Sigma)\left(\frac{1}{n^2}\sum_{i=1}^n\E[| X_i|^4]\right)^{3/8}
&\leq\left(\varkappa^{3}(\Sigma)\frac{\|\Sigma\|_{H.S.}}{n^2}\sum_{i=1}^n\E[| X_i|^4]\right)^{3/8}\\
&\leq\left(\varkappa^{3}(\Sigma)\delta_1(W)\right)^{3/8}
\leq\left(\varkappa^{3}(\Sigma)\delta_1(W)\right)^{1/4},
}
where the first inequality follows from $\varkappa(\Sigma)^{-1}\leq\Lambda_1(\Sigma)=\|\Sigma\|_{H.S.}$ and the last one follows from \eqref{ef-wlog}. 
Thus, Theorem \ref{efron} follows from \eqref{ef1} and \eqref{ef-delta0}--\eqref{ef-delta2}. 

Next we prove Corollary \ref{coro-ef}. 
The first claim immediately follows from Theorems \ref{BALL2} and \ref{efron}. 
Besides, Since $\P(|W^*|>x\mid X)=1-\P(|W^*|\leq x\mid X)$ and $\P(|W|>q_n^*(\alpha))=1-\P(|W|\leq q_n^*(\alpha))$, the second claim follows from Theorems \ref{BALL2} and \ref{efron} along with Proposition 3.2 of \cite{Ko19}. 

\subsection{Proof of Proposition \ref{prop:op-norm}}\label{sec5.8}

Set $\bar\Sigma:=n^{-1}\sum_{i=1}^nX_iX_i^\top$. 
Since $\wh{\Sigma}=\bar\Sigma-\bar{X}\bar{X}^\top$, we have
\ba{
\E\|\wh\Sigma-\Sigma\|_{op}
\leq\E\|\bar\Sigma-\Sigma\|_{op}+\E\|\bar{X}\bar{X}^\top\|_{op}.
}
We have by \eqref{hs-bound2}
\[
\E\|\bar{X}\bar{X}^\top\|_{op}
\leq\E\|\bar{X}\bar{X}^\top\|_{H.S.}
=\frac{1}{n^2}\sum_{i=1}^n\E[|X_i|^2]
=\frac{\tr(\Sigma)}{n}.
\]
Since $L\geq1$, we complete the proof once we show
\ben{\label{aim:op-norm}
\E\|\bar\Sigma-\Sigma\|_{op}\leq C\left(L^2\sqrt{\frac{\|\Sigma\|_{op}\tr(\Sigma)}{n}}+L^4\frac{\tr(\Sigma)}{n}\right).
}
The proof of \eqref{aim:op-norm} is a trivial modification of that of Theorem 9.2.4 in \cite{Ve18}. 
First, note that \eqref{sub-gauss} is satisfied when we replace $X_i$ by $UX_i$ for any $d\times d$ orthogonal matrix $U$. Thus, without loss of generality, we may assume $\Sigma$ is a diagonal matrix. In addition, since $\Sigma_{jj}=0$ implies $\bar\Sigma_{jj}=0$, it suffices to consider the case that $\Sigma_{jj}>0$ for all $j=1,\dots,d$. 
Then, we have, with $Y_i:=\Sigma^{-1/2}X_i$,
\ba{
\|\bar\Sigma-\Sigma\|_{op}
&=\left\|\Sigma^{1/2}\left(\frac{1}{n}\sum_{i=1}^nY_iY_i^\top-I_d\right)\Sigma^{1/2}\right\|_{op}\\
&=\sup_{x\in\mathbb R^d:|x|\leq1}\left|x^\top\Sigma^{1/2}\left(\frac{1}{n}\sum_{i=1}^nY_iY_i^\top-I_d\right)\Sigma^{1/2}x\right|\\
&=\sup_{x\in T}\left|x^\top\left(\frac{1}{n}\sum_{i=1}^nY_iY_i^\top-I_d\right)x\right|,
}
where $T:=\{\Sigma^{1/2}x:x\in\mathbb R^d,|x|\leq1\}$. Hence we obtain
\ben{\label{rewrite-opnorm}
\|\bar\Sigma-\Sigma\|_{op}
=\sup_{x\in T}\left|\frac{1}{n}\sum_{i=1}^n(Y_i\cdot x)^2-|x|^2\right|
=\frac{1}{n}\sup_{x\in T}\left||Ax|^2-n|x|^2\right|,
}
where $A$ is the $n\times d$ matrix with rows $Y_i$. It is straightforward to check that $\max_{1\leq i\leq n}\|Y_i\cdot u\|_{\psi_2}\leq L|u|$ for all $u\in\mathbb R^d$. Therefore, We have by Theorem 9.1.3 and Exercise 8.6.6 in \cite{Ve18}
\ba{
\sqrt{\E\big[\sup_{x\in T}\left||Ax|-\sqrt{n}|x|\right|^2\big]}
\leq CL^2\E[\sup_{x\in T}|Z\cdot x|],
}
where $Z\sim N(0,I_d)$. Using the Schwarz inequality, we obtain
\ba{
\E[\sup_{x\in T}|Z\cdot x|]
=\E[\sup_{x\in \mathbb R^d:|x|\leq1}|\Sigma^{1/2}Z\cdot x|]
\leq\E|\Sigma^{1/2}Z|
\leq\sqrt{\tr(\Sigma)}.
}
Therefore, we deduce
\ba{
\sqrt{\E\big[\sup_{x\in T}\left||Ax|-\sqrt{n}|x|\right|^2 \big]}
\leq CL^2\sqrt{\tr(\Sigma)}
}
and
\ba{
\sqrt{\E\big[\sup_{x\in T}\left||Ax|+\sqrt{n}|x|\right|^2\big]}
&\leq CL^2\sqrt{\tr(\Sigma)}+\sqrt{\sup_{x\in T}(2\sqrt{n}|x|)^2}\\
&\leq CL^2\sqrt{\tr(\Sigma)}+2\sqrt n\sqrt{\|\Sigma\|_{op}}.
}
Combining these bounds with \eqref{rewrite-opnorm}, we obtain \eqref{aim:op-norm}. 

\subsection{Proof of Theorem \ref{wild} and Corollary \ref{coro-w}}

The proof is a straightforward modification of arguments in Section \ref{proof:efron} (replace $\widetilde{X}_i$ and $\wh \Sigma$ by $X_i$ and $\bar \Sigma$ respectively and remove all the computations involving $\bar X$) and therefore omitted. 

\appendix

\section{Appendix: A Result beyond Independence}

The proof of Theorem \ref{BALL2} can be modified, in a straightforward but tedious manner, to prove Gaussian approximation results on centered balls for sums of locally dependent random vectors. For example, we can obtain the following result for $m$-dependent sequences of random vectors (cf. \cite{HoRo48}).

\begin{theorem}\label{BALL3}
For integers $n>m\geq 0$,
let $\xi_1,\dots, \xi_n$ be a sequence of $m$-dependent random vectors in $\mathbb{R}^d$, that is, $\{\xi_1,\dots, \xi_i\}$ is independent of $\{\xi_{i+m+1},\dots, \xi_n\}$ for any $i$.
Let $W=\sum_{i=1}^n \xi_i$.
Suppose $\Var(W)=I_d$
and
\ben{\label{n50}
\E[|\xi_i|^6]\leq \delta^6
}
for any $i$ and a positive constant $\delta$.
Then we have, with $\wt m=m\vee 1$,
\bes{
&\sup_{r\geq 0}|\P(|W|\leq r)-\P(|Z|\leq r)|\\
\leq& C\left\{  \left(\frac{ [\wt m\sum_{i=1}^n \E[|\xi_i|^2]+n\wt m^3\delta^4][n\wt m^3\delta^4(n\wt m^3\delta^4+1)]}{d^{3}}\right)^{1/8} +\left(\frac{n\wt m^2\delta^3+n\wt m^3\delta^4}{d}\right)^{1/3}  \right\},
}
where $Z\sim N(0,I_d)$.
\end{theorem}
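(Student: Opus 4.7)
The plan is to adapt the proof of Theorem \ref{BALL2} by replacing Stein's leave-one-out trick with a leave-a-neighborhood-out trick suited to $m$-dependence. For each $i$, set $N_i := \{j : 1 \leq j \leq n,\,|j-i| \leq m\}$, $\eta_i := \sum_{j \in N_i} \xi_j$, and $W^{(i)} := W - \eta_i$; by $m$-dependence, $\xi_i$ is independent of $W^{(i)}$. The other ingredients are inherited from the proof of Theorem \ref{BALL2}: smooth the indicator of the ball via $h_{a,\eps}(w) = g(\eps^{-1}(|w|^2 - a))$, absorb the resulting anti-concentration error of order $\varkappa(I_d)\eps \asymp \eps/d$ using Lemma \ref{anti-ball2}, and use the Stein equation with covariance $\Sigma_W = \Sigma = I_d$.

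The main decomposition parallels the one used to derive \eq{ball2-bound}. Writing $\E[W \cdot \nabla f(W)] = \sum_i \E[\xi_i \cdot \nabla f(W)]$ and Taylor expanding $\nabla f(W)$ around $W^{(i)}$, the zeroth-order term vanishes because $\xi_i$ is independent of $W^{(i)}$ and mean-zero, while the first-order term produces $\sum_{i,j,k}\E[\xi_{ij}\eta_{i,k}]\E[\partial_{jk}f(W^{(i)})]$. After replacing $W^{(i)}$ by $W$ in this term (the difference being itself a remainder treated analogously), the result matches the Hessian Stein term $\sum_{j,k}\E[\partial_{jk}f(W)]$ via the identity
\begin{align*}
\sum_{i=1}^n\E[\xi_i\eta_i^\top] = \sum_{(i,j):\,|i-j|\leq m}\E[\xi_i\xi_j^\top] = \sum_{i,j=1}^n\E[\xi_i\xi_j^\top] = \Var(W) = I_d,
\end{align*}
where the second equality uses $m$-dependence and centering. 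The leftover terms are higher-order Taylor remainders of the form $\E[\xi_{ij}\eta_{i,k_1}\cdots\eta_{i,k_r}\partial_{jk_1\cdots k_r}f(\cdot)]$, which are bounded by rewriting them (using the explicit form of $g^{(r)}$ applied to $h_{a,\eps}$) as expectations of functions of $|\xi_i|$, $|\eta_i|$, $\xi_i \cdot W^{(i)}$ and $\xi_i \cdot Z_W$, where $Z_W \sim N(0,I_d)$ is an auxiliary Gaussian.

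The technical core, replacing Lemma \ref{portnoy}, is an $m$-dependent moment bound of the form
\begin{align*}
\E[(\xi_i \cdot W^{(i)})^p \mid \xi_i] \leq C_p\bklr{\wt m^{p-1}|\xi_i|^p\sum_{u=1}^n\E[|\xi_u|^p] + (\xi_i^\top \Var(W^{(i)})\xi_i)^{p/2}}, \qquad p=2,3,4,
\end{align*}
which follows from the independence $\xi_i \perp W^{(i)}$ together with the counting fact that in the expansion $\xi_i \cdot W^{(i)} = \sum_{u \notin N_i}\xi_i\cdot\xi_u$ each summand is correlated with at most $O(\wt m)$ others. Since $\Var(W) = I_d$ and $\|\Var(\eta_i)\|_{op} = O(\wt m)$, one has $\|\Var(W^{(i)})\|_{op} = O(1)$, so $\xi_i^\top\Var(W^{(i)})\xi_i \lesssim |\xi_i|^2$. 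Combined with the derivative estimates for $h_{a,\eps}$ and the integration formula \eq{n10c}, all remainders reduce to moment expressions in $\sum_i\E[|\xi_i|^2]$, $n\wt m^k\delta^k$, and powers of $\eps$.

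The principal obstacle is bookkeeping. Whereas Theorem \ref{BALL2} needs only a low-order Taylor expansion of $\nabla f$, here one must go further because the products $\eta_i^{\otimes r}$ remain non-negligible up to a higher $r$, and care is needed to isolate the single matching with the Hessian Stein term and treat everything else as genuine remainder. The sixth-moment assumption \eq{n50} enters precisely at this stage: several remainders generate expectations such as $\E[|\xi_i|^3|\eta_i|^3]$ which via the Cauchy--Schwarz inequality require $\E[|\xi_i|^6]<\infty$ to be usefully controlled. After aggregating all error terms and optimizing $\eps$ to balance each remainder $\eps^{-a}X$ against the anti-concentration cost $\eps/d$, the two exponents $1/8$ and $1/3$ in the stated bound emerge from balancing the different $\eps^{-a}$-type remainders.
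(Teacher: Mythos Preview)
Your outline is close in spirit to the paper's argument, but there is a genuine gap in the decomposition step. You claim that the first-order Taylor term factors as $\sum_{i,j,k}\E[\xi_{ij}\eta_{i,k}]\E[\partial_{jk}f(W^{(i)})]$. This would require $(\xi_i,\eta_i)$ to be independent of $W^{(i)}$, but under $m$-dependence only $\xi_i$ is independent of $W^{(i)}$: the block $\eta_i=\sum_{v\in N_i}\xi_v$ contains boundary terms (e.g.\ $\xi_{i+m}$) that are correlated with terms in $W^{(i)}$ (e.g.\ $\xi_{i+m+1}$). Replacing $W^{(i)}$ by $W$ afterward does not help, since $(\xi_i,\eta_i)$ is certainly not independent of $W$ either. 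The paper resolves this by introducing a \emph{second} leave-out: for each pair $u$ and $v\in A_u$ one removes the union neighborhood $A_{uv}=\{r:|r-u|\leq m\text{ or }|r-v|\leq m\}$ to form $W^{(uv)}$, which \emph{is} independent of $(\xi_u,\xi_v)$. The main terms on both sides of the Stein identity are then matched at $\sum_u\sum_{v\in A_u}\E[\xi_{uj}\xi_{vk}]\E[\partial_{jk}f(W^{(uv)})]$, and the remainder $R_1'$ is defined relative to $W^{(uv)}$ rather than $W^{(i)}$.

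A second consequence is that your proposed replacement for Lemma~\ref{portnoy} does not fit. Because the remainder is now indexed by pairs $(u,v)$, the dominant term $R_{13}'$ involves triple products $\E[|\xi_u\cdot W^{(uv)}||\xi_v\cdot W^{(uv)}||\xi_r\cdot W^{(uv)}|]$ with $r\in A_{uv}$, which the paper bounds via a \emph{mixed} moment estimate $\E[(\xi_u\cdot W^{(uv)})^2(\xi_v\cdot W^{(uv)})^2]\leq C\E[|\xi_u|^2|\xi_v|^2](n\wt m^3\delta^4+1)$ (Lemma~\ref{portnoy2}), together with an optimization over an auxiliary parameter $\theta$. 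Your single-index bound on $\E[(\xi_i\cdot W^{(i)})^p\mid\xi_i]$ does not directly control such mixed products, and this is precisely where the product structure $[\wt m\sum_i\E|\xi_i|^2+n\wt m^3\delta^4][n\wt m^3\delta^4(n\wt m^3\delta^4+1)]$ in the final bound comes from. (Also, your side claim $\|\Var(W^{(i)})\|_{op}=O(1)$ does not follow from $\Var(W)=I_d$ and $\|\Var(\eta_i)\|_{op}=O(\wt m)$ alone, since $W^{(i)}$ and $\eta_i$ are not independent.)
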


The bound above reduces to \eq{n42} under the setting of Corollary \ref{cor3} with the additional assumption that $\max_{1\leq i\leq n}\max_{1\leq j\leq d}\E[|X_{ij}|^6]\leq C$ (In this case: $\wt m=1, \delta\leq C\sqrt{d}/\sqrt{n}, \sum_{i=1}^n \E[|\xi_i|^2]=d$).
Note that the sixth moment assumption appears because we can no longer separate terms as in the proof of Theorem \ref{BALL2} without the independence assumption.

\begin{proof}[Proof of Theorem \ref{BALL3}]
We follow the proof of Theorem \ref{BALL2} and the notation used therein. 

We first introduce some new notation.
Let $[n]=\{1,\dots, n\}$.
For $u\in [n]$, let $A_u=\{v\in [n]: |v-u|\leq m\}$, so that $\xi_u$ is independent of $\{\xi_v: v\notin A_u\}$ by the $m$-dependence assumption.
Let $W^{(u)}=W-\sum_{v\in A_u}\xi_v$.
Similarly, for $u\in [n]$ and $v\in A_u$, let $A_{uv}=\{r\in [n]: |r-u|\leq m\ \text{or}\ |r-v|\leq m\}$.
Let $\eta^{(uv)}=\sum_{r\in A_{uv}}\xi_r$, $W^{(uv)}=W-\eta^{(uv)}$. Note that $\{\xi_u, \xi_v\}$ is independent of $W^{(uv)}$.

Note that for any random vectors $\zeta_1, \zeta_2\in \mathbb{R}^d$ independent of $W$, we have
\ben{\label{n51}
\E[(\zeta_1 \cdot W)(\zeta_2 \cdot W)|\zeta_1, \zeta_2]=\E[\zeta_1^\top W W^\top \zeta_2|\zeta_1, \zeta_2]=\zeta_1^\top \zeta_2.
}
Moreover, for any positive integer $k$ and vectors $\zeta_1, \dots, \zeta_k\in \mathbb{R}^d$, we have
\ben{\label{n52}
|\zeta_1|\cdots|\zeta_k|\leq |\zeta_1|^k+\dots+|\zeta_k|^k.
}
The condition \eq{n50} implies
\ben{\label{n55}
\E[|\xi_u|^k]\leq \delta^k,\quad \forall \ k=1,\dots, 6.
}

We need the following lemma which corresponds to Lemma \ref{portnoy} for the independent case. We will prove the lemma at the end of the Appendix.

\begin{lemma}\label{portnoy2}
For $u\in [n]$, $v\in A_u$ and $r\in A_{uv}$, we have
\ben{\label{n53}
\E[(\xi_r \cdot W^{(uv)})^2]\leq C\big\{ \E[|\xi_r|^2]+\wt m^2 \delta^4  \big\}
}
and
\ben{\label{n54}
\E[(\xi_u\cdot W^{(uv)})^2(\xi_v\cdot W^{(uv)})^2]\leq C\E[|\xi_u|^2|\xi_v|^2](n\wt m^3\delta^4+1).
}
\end{lemma}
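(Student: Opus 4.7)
\noindent\textbf{Proof proposal for Lemma \ref{portnoy2}.} The plan is to combine the $m$-dependence structure with two different devices: peeling off a piece of $W^{(uv)}$ that is independent of $\xi_r$ for \eq{n53}, and a conditional Cauchy--Schwarz reduction followed by an Isserlis-type expansion plus the cluster property of fourth cumulants for \eq{n54}. A preliminary fact I would record first, obtained by intersecting the past/future splits in the theorem's $m$-dependence hypothesis at $i=r-m-1$ and $i=r$, is that $\xi_r$ is \emph{jointly} independent of $(\xi_s)_{s\notin A_r}$; hence any measurable function of $(\xi_s)_{s\notin A_r}$ is independent of $\xi_r$.

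For \eq{n53}, I would decompose $W^{(uv)}=W^{(r)}+D$, where $W^{(r)}:=W-\sum_{s\in A_r}\xi_s$ and $D:=\sum_{s\in A_r\setminus A_{uv}}\xi_s-\sum_{s\in A_{uv}\setminus A_r}\xi_s$ is a sum of at most $|A_r|+|A_{uv}|\leq C\wt m$ of the $\xi_s$'s in which $\xi_r$ does not appear (since $r\in A_r\cap A_{uv}$). By the preliminary fact $W^{(r)}$ is independent of $\xi_r$, so
\[
\E[(\xi_r\cdot W^{(r)})^2]=\E[\xi_r^\top\Var(W^{(r)})\xi_r]\leq\|\Var(W^{(r)})\|_{op}\E[|\xi_r|^2].
\]
To bound $\|\Var(W^{(r)})\|_{op}$ despite the lack of independence between $W^{(r)}$ and $\sum_{s\in A_r}\xi_s$, I would apply the $L^2$ triangle inequality to $a\cdot W^{(r)}=a\cdot W-a\cdot\sum_{s\in A_r}\xi_s$ for $|a|=1$ and use $\E[|\sum_{s\in A_r}\xi_s|^2]\leq C\wt m^2\delta^2$ (from Minkowski and \eq{n55}) to obtain $\|\Var(W^{(r)})\|_{op}\leq C(1+\wt m^2\delta^2)$. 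For the remainder, Cauchy--Schwarz and \eq{n52} give $\E[(\xi_r\cdot D)^2]\leq\E[|\xi_r|^2|D|^2]\leq C\wt m^2\delta^4$. Combining via $(\xi_r\cdot W^{(uv)})^2\leq 2(\xi_r\cdot W^{(r)})^2+2(\xi_r\cdot D)^2$ and absorbing $\wt m^2\delta^2\E[|\xi_r|^2]\leq\wt m^2\delta^4$ (using $\E[|\xi_r|^2]\leq\delta^2$ from \eq{n55}) into the $\wt m^2\delta^4$ term yields \eq{n53}.

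For \eq{n54}, the starting point is that $W^{(uv)}$ is independent of $(\xi_u,\xi_v)$, so conditional Cauchy--Schwarz
\[
\E[(\xi_u\cdot W^{(uv)})^2(\xi_v\cdot W^{(uv)})^2]\leq\E\left[\sqrt{\E[(\xi_u\cdot W^{(uv)})^4\mid\xi_u]\,\E[(\xi_v\cdot W^{(uv)})^4\mid\xi_v]}\right]
\]
reduces the task to showing $\E[(a\cdot W^{(uv)})^4]\leq C|a|^4(1+n\wt m^3\delta^4)$ for every deterministic $a\in\IR^d$. For fixed $a$ the Isserlis-type identity gives $\E[(a\cdot W^{(uv)})^4]=3(a^\top\Var(W^{(uv)})a)^2+\kappa_4(a\cdot W^{(uv)})$, where $\kappa_4$ denotes the fourth joint cumulant. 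The first summand is $\leq C|a|^4(1+\wt m^4\delta^4)\leq C|a|^4(1+n\wt m^3\delta^4)$ via the same variance estimate as in the previous paragraph and $\wt m\leq n$.

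The hard part will be the cumulant term $\kappa_4(a\cdot W^{(uv)})=\sum_{s_1,\dots,s_4\notin A_{uv}}\mathrm{cum}(a\cdot\xi_{s_1},\dots,a\cdot\xi_{s_4})$. Here I would invoke the cluster property of cumulants: if the sorted indices $s_{(1)}\leq\cdots\leq s_{(4)}$ have any consecutive gap exceeding $m$, then taking $i=s_{(j)}$ in the theorem's past/future independence lets the four variables split into two independent groups, so their mixed cumulant vanishes. Thus only 4-tuples with span $s_{(4)}-s_{(1)}\leq 3m$ contribute, of which there are at most $Cn\wt m^3$ ordered tuples. For each surviving tuple the cumulant-by-moments bound $|\mathrm{cum}(X_1,\dots,X_4)|\leq 4\prod_i\|X_i\|_{L^4}$, combined with $|a\cdot\xi_s|\leq|a||\xi_s|$ and $\E[|\xi_s|^4]\leq\delta^4$ (from \eq{n50} and Jensen), gives each term at most $4|a|^4\delta^4$. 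Collecting, $|\kappa_4(a\cdot W^{(uv)})|\leq Cn\wt m^3\delta^4|a|^4$, and thus $\E[(a\cdot W^{(uv)})^4]\leq C|a|^4(1+n\wt m^3\delta^4)$. Substituting $a=\xi_u$ and $a=\xi_v$ inside the conditional Cauchy--Schwarz and taking expectation yields \eq{n54}.
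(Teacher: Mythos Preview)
Your argument is correct. For \eqref{n53} you do essentially what the paper does: peel off from $W^{(uv)}$ a piece independent of $\xi_r$ (you use $W^{(r)}$, the paper uses $W^{(uvr)}$), bound the variance of the peeled part via the $L^2$ triangle inequality against $\Var(W)=I_d$, and absorb the short-range remainder into $\wt m^2\delta^4$.

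For \eqref{n54} your route is genuinely different. The paper fixes $\xi_u=a$, $\xi_v=b$ and expands $\E[(a\cdot W^{(uv)})^2(b\cdot W^{(uv)})^2]$ directly as a sum over quadruples $(r,s,p,q)$, then splits into ``connected'' quadruples (at most $Cn\wt m^3$ of them, each contributing $|a|^2|b|^2\delta^4$) and quadruples that decouple into two independent pairs, which it reduces to the quantity $a^\top\Var(W^{(uv)})a\cdot b^\top\Var(W^{(uv)})b\approx|a|^2|b|^2$ by the same variance comparison. You instead apply conditional Cauchy--Schwarz to reduce to a single fourth moment $\E[(a\cdot W^{(uv)})^4]$, then invoke the moment--cumulant identity $\E[X^4]=3(\Var X)^2+\kappa_4(X)$ and the cluster/vanishing property of joint cumulants under $m$-dependence to isolate the $O(n\wt m^3)$ connected tuples. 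Both reach the same bound; your cumulant argument is slightly more systematic and would scale more cleanly to higher-order moments or more general local-dependence graphs, while the paper's bare-hands expansion avoids any cumulant machinery and makes the cross term $(a\cdot W)^2(b\cdot W)^2$ visible without the Cauchy--Schwarz loss (which, here, costs nothing).
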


Following the arguments leading to \eq{loo-1} and \eq{loo-2} but using the $m$-dependence assumption and Taylor's expansion around $W^{(u)}$ and $W^{(uv)}$, we obtain
\be{
\E [W\cdot \nabla f(W)]
=\sum_{u=1}^n\sum_{v\in A_u} \sum_{j,k=1}^d\E[\xi_{uj}\xi_{vk}]\E[\partial_{jk}f(W^{(uv)})]
+R_1'
}
and
\be{
\E\langle\Hess f(W), I_d\rangle_{H.S.}=\sum_{u=1}^n\sum_{v\in A_u} \sum_{j,k=1}^d\E[\xi_{uj}\xi_{vk}]\E[\partial_{jk}f(W^{(uv)})]
+R_2',
}
where
\[
R_1'=\sum_{u=1}^n\sum_{v\in A_u}\sum_{j,k=1}^d\E[\xi_{uj}\xi_{vk} (\partial_{jk}f(W^{(u)}+U\sum_{r\in A_u}\xi_r)-\partial_{jk}f(W^{(uv)})],
\]
\[
R_2'=\sum_{u=1}^n\sum_{v\in A_u}\sum_{j,k=1}^d\E[\xi_{uj}\xi_{vk}]\E[\partial_{jk}f(W)-\partial_{jk}f(W^{(uv)})],
\]
and $U$ is a uniform random variable on $[0,1]$ independent of everything else.

For ease of notation, we assume $U=1$ in the above $R_1'$. It will be easy to see that the final bound on $|R_1'|$ holds for any $U\in [0,1]$.
Straightforward modifications of \eq{r1-decomp} and the arguments after that in the proof of Theorem \ref{BALL2}, we obtain
\be{
R_1'= R_{11}'+R_{12}'+R_{13}'+R_{14}',
}
where
\be{
|R_{11}'|\leq C\eps^{-2} \sum_{u=1}^n \sum_{v\in A_u} \E\left[|\xi_u\cdot \eta^{(uv)}||\xi_v\cdot \eta^{(uv)}|\right],
}
\be{
|R_{12}'|\leq C\eps^{-2} \sum_{u=1}^n \sum_{v\in A_u} \E\left[ |\xi_u\cdot \eta^{(uv)}|(|\xi_v\cdot W^{(uv)}|+|\xi_v\cdot Z|)+|\xi_v\cdot \eta^{(uv)}|(|\xi_u\cdot W^{(uv)}|+|\xi_u\cdot Z|)   \right],
}
\bes{
|R_{13}'|\leq C\eps^{-3} \sum_{u=1}^n \sum_{v\in A_u}\E\Big[ &\left( |\xi_u\cdot W^{(uv)}|+|\xi_u\cdot Z|\right) \left(|\xi_v\cdot W^{(uv)}|+|\xi_v\cdot Z| \right)\\
&\times\left(|\eta^{(uv)}\cdot W^{(uv)}|+|\eta^{(uv)}\cdot Z|+|\eta^{(uv)}|^2 \right)\Big],
}
\be{
|R_{14}'|\leq C\eps^{-2} \sum_{u=1}^n \sum_{v\in A_u} \E \Big[ |\xi_u\cdot \xi_v|\left(|\eta^{(uv)}\cdot W^{(uv)}|+|\eta^{(uv)}\cdot Z|+|\eta^{(uv)}|^2 \right) \Big].
}
Using \eq{n52} and \eq{n55}, we have
\be{
|R_{11}'|\leq C \eps^{-2} n\wt m^3\delta^4.
}
For any $r\in A_{uv}$, 
let $A_{uvr}=\{s\in [n]: |s-u|\wedge |s-v|\wedge |s-r|\leq m\}$
and let $W^{(uvr)}=W-\sum_{s\in A_{uvr}}\xi_s$.
Note that $\{\xi_u, \xi_v, \xi_r\}$ is independent of $W^{(uvr)}$ by the $m$-dependence assumption.
We have
\bes{
\E[|\xi_u\cdot \xi_v||\xi_r\cdot W^{(uv)}|]\leq& \E[|\xi_u\cdot \xi_v||\xi_r\cdot (W^{(uv)}-W^{(uvr)})|]+\E[|\xi_u\cdot \xi_v||\xi_r\cdot W^{(uvr)}|]\\
\leq & C\wt m \delta^4+ C (\wt m \delta^4+\delta^3),
}
where we used \eq{n51}--\eq{n55}.
In addition, conditional on $\xi_u$, $\xi_u\cdot Z$ follows the normal distribution with mean 0 and variance $|\xi_u|^2$. Hence we obtain
\be{
|R_{14}'|\leq C \eps^{-2} n(\wt m^2\delta^3+\wt m^3\delta^4).
}
Similarly, 
\be{
|R_{12}'|\leq C \eps^{-2} n(\wt m^2\delta^3+\wt m^3\delta^4).
}
A main term in the upper bound for $|R_{13}'|$ is 
\be{
C\eps^{-3} \sum_{u=1}^n \sum_{v\in A_u} \sum_{r\in A_{uv}}\E[|\xi_u\cdot W^{(uv)}||\xi_v\cdot W^{(uv)}||\xi_r\cdot W^{(uv)}|],
}
which is bounded by, for any $\theta>0$,
\bes{
&C\eps^{-3} \sum_{u=1}^n \sum_{v\in A_u} \sum_{r\in A_{uv}}\left\{ \frac{\E[|\xi_r\cdot W^{(uv)}|^2]}{\theta}+\theta \E[(\xi_u\cdot W^{(uv)})^2(\xi_v\cdot W^{(uv)})^2]   \right\}\\
\leq &C\eps^{-3} \sum_{u=1}^n \sum_{v\in A_u} \sum_{r\in A_{uv}}\left\{ \frac{\E[|\xi_r|^2]+\wt m^2 \delta^4}{\theta}+\theta \delta^4(n\wt m^3\delta^4+1)  \right\}\\
\leq & C\eps^{-3}\wt m^2 \left\{\frac{\sum_{r=1}^n \E[|\xi_r|^2]+n\wt m^2\delta^4}{\theta}+\theta n\delta^4 (n\wt m^3\delta^4+1)   \right\},
}
where the first inequality follows from Lemma \ref{portnoy2}.
By optimizing over $\theta$, the bound becomes
\be{
C\eps^{-3 }\sqrt{[\wt m\sum_{i=1}^n \E[|\xi_i|^2]+n\wt m^3\delta^4][n\wt m^3\delta^4(n\wt m^3\delta^4+1)]}.
}
By similar and easier argument for other terms, we obtain 
\be{
|R_{13}'|\leq C\eps^{-3 }\sqrt{[\wt m\sum_{i=1}^n \E[|\xi_i|^2]+n\wt m^3\delta^4][n\wt m^3\delta^4(n\wt m^3\delta^4+1)]}.
}
Therefore, 
\be{
|R_1'|\leq C \eps^{-2} n(\wt m^2\delta^3+\wt m^3\delta^4)+C\eps^{-3 }\sqrt{[\wt m\sum_{i=1}^n \E[|\xi_i|^2]+n\wt m^3\delta^4][n\wt m^3\delta^4(n\wt m^3\delta^4+1)]}.
}
We can prove by a similar argument to the above that $R_2'$ has the same bound.
Combining with \eq{eq:smoothing}, we obtain the theorem.
\end{proof}

\begin{proof}[Proof of Lemma \ref{portnoy2}]
We first prove \eq{n53}. 
Recall $A_{uvr}=\{s\in [n]: |s-u|\wedge |s-v|\wedge |s-r|\leq m\}$
and $W^{(uvr)}=W-\sum_{s\in A_{uvr}}\xi_s$.
We have
\be{
\E[(\xi_r \cdot W^{(uv)})^2]\leq 2 \E\big[(\xi_r\cdot (W^{(uv)}-W^{(uvr)}))^2 +(\xi_r\cdot W^{(uvr)})^2   \big].
}
From \eq{n52} and \eq{n55}, we have
\be{
\E[(\xi_r\cdot (W^{(uv)}-W^{(uvr)}))^2]\leq C \wt m^2\delta^4.
}
Moreover, because $\xi_r$ is independent of $W^{(uvr)}$, we have, from \eq{n51},
\be{
\E[(\xi_r\cdot W^{(uvr)})^2]\leq \E[|\xi_r|^2]+ C\wt m^2\delta^4.
}
This proves \eq{n53}.

Next, we prove \eq{n54}. Because $\{\xi_u, \xi_v\}$ is independent of $W^{(uv)}$, we can treat $\xi_u=a$ and $\xi_v=b$ as non-random vectors in the proof. By considering all the non-zero expectation terms, we obtain
\bes{
\E[(a\cdot W^{(uv)})^2(b\cdot W^{(uv)})^2]=&\E[\sideset{}{'}\sum (a\cdot \xi_r)(a\cdot \xi_s)(b\cdot \xi_p)(b\cdot \xi_q)]\\
&+\E[\sideset{}{''}\sum (a\cdot \xi_r)(a\cdot \xi_s)(b\cdot \xi_p)(b\cdot \xi_q)],
}
where the first sum $\sideset{}{'}\sum$ is over indices $r,s,p,q\notin A_{uv}$ that are all connected, that is, each index is within distance $m$ from one of the other three indices and the second sum $\sideset{}{''}\sum$ is over indices $r,s,p,q\notin A_{uv}$ that can be separated into two disconnected parts, e.g., $\{r,s\}$ and $\{p,q\}$ such that $|r-s|\leq m$, $|p-q|\leq m$, but $\{r,s\}$ and $\{p,q\}$ are at least distance $m+1$ apart.

Because there are at most $C n \wt m^3$ terms in the first sum $\sideset{}{'}\sum$, we have, from \eq{n52} and \eq{n55},
\be{
\E[\sideset{}{'}\sum (a\cdot \xi_r)(a\cdot \xi_s)(b\cdot \xi_p)(b\cdot \xi_q)]\leq C|a|^2|b|^2 n\wt m^3 \delta^4.
}
A typical term in the second sum $\sideset{}{''}\sum$ is
\ben{\label{n56}
\E\Big\{\sum_{r,s\notin A_{uv}: \atop |r-s|\leq m}(a\cdot \xi_r)(a\cdot \xi_s)  \E\big[\sum_{p,q\notin A_{uv}\cup A_{rs}:\atop |p-q|\leq m}(b\cdot \xi_p)(b\cdot \xi_q)\big] \Big\}.
}
From \eq{n51},
\be{
\E[\sum_{p,q\notin A_{uv}\cup A_{rs}:\atop |p-q|\leq m}(b\cdot \xi_p)(b\cdot \xi_q)]=|b|^2+O(|b|^2\wt m^2 \delta^2).
}
Hence, \eq{n56} can be simplified as
\bes{
&\E[\sum_{r,s\notin A_{uv}: \atop |r-s|\leq m}(a\cdot \xi_r)(a\cdot \xi_s) ] |b|^2+O(|a|^2|b|^2 n \wt m^3\delta^4)\\
=&|a|^2|b|^2+ O(|a|^2|b|^2 \wt m^2 \delta^2) +O(|a|^2|b|^2 n \wt m^3\delta^4),
}
where we used \eq{n51} again in the last equation. Note that 
\be{
2 \wt m^2\delta^2\leq  1+\wt m^4 \delta^4   \leq 1+n \wt m^3 \delta^4 .
}
Therefore, \eq{n56} is bounded by
\be{
C|a|^2|b|^2(n \wt m^3 \delta^4+1).
}
All the other terms in the second sum $\sideset{}{''}\sum$ have the same upper bound.
This proves \eq{n54}.
\end{proof}

\section*{Acknowledgements}

We thank Wei Biao Wu for pointing us to the reference \cite{XuZhWu19}.
Fang X. was partially supported by Hong Kong RGC ECS 24301617 and GRF 14302418 and 14304917, a CUHK direct grant and a CUHK start-up grant. 
Koike Y. was partially supported by JST CREST Grant Number JPMJCR14D7 and JSPS KAKENHI Grant Numbers JP17H01100, JP18H00836, JP19K13668.


\end{document}